\documentclass[12pt]{amsart}

\usepackage[a4paper,margin=23mm,top=30mm]{geometry}

\usepackage{amsfonts, amsmath, amssymb, amsgen, amsthm, amscd}
\usepackage{newtxtext,newtxmath}
\usepackage{mathtools}
\usepackage[utf8]{inputenc} 

\usepackage{color}

\usepackage[all]{xy}

\usepackage{hyperref}
\usepackage{mathtools,slashed}

\setlength{\parindent}{0cm}
\setlength{\parskip}{3mm}

\usepackage{enumerate}

\def\a{\alpha}
\def\b{\beta}
\def\d{\delta}
\def\D{\Delta}
\def\g{\gamma}

\def\s{\sigma}

\def\t{\theta}

\def\ve{\varepsilon}
\def\vp{\varphi}

\def\Id{\mathop{\rm Id}\nolimits}

\def\ot{\otimes}

\def\rt{\triangleright}
\def\lt{\triangleleft}

\def\lbiprod{{>\!\!\!\triangleleft\kern-.33em\cdot\, }}
\def\rbiprod{{\cdot\kern-.33em\triangleright\!\!\!<}}

\def\D{\Delta}

\def\Id{\mathop{\rm Id}\nolimits}

\newcommand{\ns}[1]{~\hspace{-4pt}_{_{{<#1>}}}}

\usepackage{enumerate}

\newcommand{\C}[1]{\mathcal{#1}}
\newcommand{\B}[1]{\mathbb{#1}}

\newcommand{\ie}{{\it i.e.\/}\ }

\numberwithin{equation}{section}

\newtheorem{theorem}{Theorem}[section]
\newtheorem{proposition}[theorem]{Proposition}
\newtheorem{lemma}[theorem]{Lemma}

\theoremstyle{definition}

\newtheorem{remark}[theorem]{Remark}
\newtheorem{example}[theorem]{Example}
\newtheorem{definition}[theorem]{Definition}

%\def \vD{\uparrow\hspace{-4pt}d}

%%% Local Variables:
%%% mode: plain-tex
%%% TeX-master: t
%%% End:

\title{Matched pairs of $m$-invertible Hopf quasigroups}

\author{M. Hassanzadeh}
\address{University of Windsor, ON, Canada}
\email{mhassan@uwindsor.ca}

\author{S. Sütlü}
\address{Işık University, Istanbul, Turkey}
\email{serkan.sutlu@isikun.edu.tr}

\begin{document}

\begin{abstract}
The matched pair theory (of groups) is studied for a class of quasigroups; namely, the $m$-inverse property loops. The theory is upgraded to the Hopf level, and the ``$m$-invertible Hopf quasigroups'' are introduced.
\end{abstract}

\maketitle

\tableofcontents

\section*{Introduction}

One of the main motivations of the theory of quasigroups may be considered to be the extension of the representation theoretical properties of the groups on the level of quasigroups; such as the character theory \cite{John92,Smit90}, module theory \cite{Smit86}, or homogeneous spaces \cite{Smit99,Smit01}. See also \cite{GrisZava05,GagoHall05,GrisZava09}.

Not much later, it was discovered that there are a plethora of areas for quasigroups to apply. Among others, an incomplete list may consists of the coding theory (see, for instance, \cite{GonsKousMarkNech98} for the quasigroup-based MDS codes, and \cite{MullShch02,MullShch04} for the quasigroup point of view towards the codes with one check symbol, as well as \cite{Grie96}), cryptology \cite{DiffHell76,GrosSys10,Shch09}, and combinatorics \cite{Benn83,HeinKlin09,Keed94,KuhlSchr16}.

In order to shed further light on the well deserved analysis of the quasigroups, we shall develop in the present paper the matched pair construction for these non-associative structures. The matched pair theory was introduced, initially, for groups in order to recover the structure of a group in terms of two subgroups with mutual actions, \cite{Brin05,KosmSchwMagr88,LuWein90,Maji90-II,Szep51,Zapp42}. More precisely, given a pair of groups $(G,H)$ with mutual actions
\[
\rt:H\times G \to G, \qquad \lt:H\times G \to H
\]
satisfying
\begin{align*}
& y\rt (xx') = (y\rt x)((y\lt x)\rt x'), \qquad y\rt 1 = 1, \\
& (yy') \lt x = (y\lt (y'\rt x))(y'\lt x), \qquad 1\lt x = 1,
\end{align*}
for any $x,x'\in G$, and any $y,y'\in H$, the cartesian product $G\bowtie H:= G\times H$ is a group with the multiplication
\[
(x,y)(x',y') = (x(y\rt x'), (y\lt x)y'),
\]
and the unit $(1,1)\in G\times H$. In this case, the pair $(G,H)$ of groups is called a matched pair of groups.

As for the quasigroups, there are many such constructions. To begin with, there are of course the direct product construction \cite{Cohn-book,Smith-book,NemeKepk71,Jeze75,Bely87,Bely90}, and the semi-direct product construction \cite{Shcherbacov-book, Sabi91,Burd76}. There is also the crossed product construction \cite{Bruc44,Bruc44-II,Belousov-book}, which is referred as quasi-direct product in \cite{Wils75}. Considering these as the binary crossed products, there are, on top of these, the $n$-ary crossed products \cite{Bori79}. The other generalizations goes under the titles of the generalized crossed product \cite{BelyLump85}, and the generalized singular direct product \cite{Sade53,Sade60}. Finally, there is the Sabinin's product \cite{Sabi72,Sabinin-book} and its generalization \cite{Burd76,Sabi96}. We refer the reader also to \cite{CheinPflugfelderSmith-book}.

The matched pair construction (for $m$-inverse property loops) that we shall develop here, on the other hand, is one based on ``mutual  actions'' of two such objects. We shall, furthermore, be able to relate this construction to the matched pair of groups; which will enable us to produce an ample amount of examples motivated from the matched pairs of groups.

Let us note that the matched pair theory of groups suit also to Hopf algebras, the quantum analogues of  groups, \cite{Maji90,Majid-book,Take81}. Just as well, there will be a Hopf analogue of the theory we shall develop here.

In \cite{KlimMaji10}, see also \cite{KlimMaji10-II}, the authors managed to develop successfully a not-necessarily associative (but coassociative, counital, and unital) (co)algebra $H$, that they call a Hopf quasigroup, with a map $S:H\to H$ satisfying compatibility conditions more general then those satisfied by the antipode of a Hopf algebra. It is further shown that $kQ$ is a Hopf quasigroup if $Q$ is an inverse property (IP) loop, and that for any Hopf quasigrop $H$, the set $G(H)$ of group-like elements form an IP loop.

Considering the Hopf algebras as linearizations of groups, one sees the antipode of a Hopf algebra as the manifestation of the inversion on a group. This point of view is precisely what has been studied in \cite{KlimMaji10-II,KlimMaji10}, where the authors succesfully developed the correct axioms for the ``antipode'' of the quantum analogue of an IP loop. Looking from a similar perspective, in the present paper we shall develop the quantum analogue of a strictly larger family; the $m$-inverse property loops. We note from \cite{KeedShch03} that the $m$-inverse property is general enough to encompass the weak-inverse-property (WIP) of \cite{Baer39}, as well as the crossed-inverse (CI) property of \cite{Artz59}.

The paper is organized as follows.

Section \ref{sect-QIP} below is about the inverse properties on quasigroups, and serves to fix the basic definitions of the main objects of study. To this end, in Subsection \ref{subsect-quasigr} we collect the definitions of quasigroups and loops, while in Subsection \ref{subsect-IPQ} we recall briefly the various inverse properties on quasigroups (with a special emphasis on the $m$-inverse property). 

Section \ref{sect-matched-pair-quasigr} is where we develop the matched pair theory for the $m$-inverse property loops. Based on the lack of literature on semi-direct product of quasigroups (in the sense that one quasigroup acts on the other, see for instance Proposition \ref{prop-semi-direct-quasi} and Proposition \ref{prop-semi-direct-quasi-II} below), and for the convenience of the reader, we begin with a recollection of the basic results (Theorem \ref{thm-quasigr-direct-prod} and Theorem \ref{thm-quasigr-direct-prod-II}) on the direct products of quasigroups in Subsection \ref{subsect-direct-prod-quasigr}, and then extend it to the semi-direct products of $m$-inverse property loops (Proposition \ref{prop-semi-direct-quasi-III} and Proposition \ref{prop-semi-direct-quasi-IV}). Finally, we achieve the full generality (proving our main results on the quasigroup level) in Subsection \ref{subsect-matched-pair-quasigr}, and succeed the matched pair construction for the $m$-inverse property loops (Proposition \ref{prop-matched-pair-quasi} and Proposition \ref{prop-matched-pair-loops}). We also discuss the universal property of this construction in Proposition \ref{prop-matched-pair-quasi-universal}, as an analogue of \cite[Prop. 6.2.15]{Majid-book} for the $m$-inverse property loops.

Section \ref{sect-linearization}, finally, is reserved for the quantum counterparts of the main results of Subsection  \ref{subsect-matched-pair-quasigr}. Accordingly, in Subsection \ref{subsect-m-invtb-Hopf-quasigr} we introduce the notion of $m$-invertible Hopf quasigroup in Definition \ref{def-m-inv-loop}. Then, in Subsection \ref{subsect-matched-pair-Hopf-quasi} we establish the matched pair theory for the $m$-invertible Hopf quasigroups (Proposition \ref{prop-matched-pair-Hopf-quasigrs}), along with a suitable version (Proposition \ref{prop-m-inv-loop-univ}) of \cite[Thm. 7.2.3]{Majid-book}.

\subsubsection*{Notation and Conventions} We shall adopt the Sweedler's notation (suppressing the summation) to denote a comultiplication; $\D:A\to A\ot A$, $\D(a):=a\ns{1}\ot a\ns{2}$. For the sake of simplicity, we shall also denote, occasionally, an element in the cartesian product $A\times B$, or tensor product $A\ot B$ as $(a,b)$, rather than $a\ot b$.

\section{Quasigroups with inverse properties}\label{sect-QIP}

In this section we shall discuss the semi-direct product, and then the matched pair constructions on two large classes of semigroups; namely the $m$-inverse loops, and the Hom-groups. To this end, we review the basics of the quasigroup theory first. We shall then focus on the 
inverse-properties (IP) over quasigroups, in order to be able to recall the $(r, s, t)$-inverse quasigroups, as well as the $m$-inverse loops. Finally, on the other extreme, we shall recall/review the basics of the Hom-groups.

\subsection{Quasigroups}\label{subsect-quasigr}~

A ``quasigroup'' is a set $Q$  with a multiplication such that  for all $a , b\in Q$,  there exist unique elements $x , y\in Q$ such that
\[
a x = b, \qquad \qquad y a = b.
\]
In this case, $x = a\backslash b$ is called the left division, and $y= b\slash a$ the right division.

Given two quasigroups $Q$ and $Q'$, a ``quasigroup homotopy'' from $Q$ to $Q'$ is a triple $(\a, \b, \g)$ of maps $Q \to Q'$ such that $\a(x) \b(y)= \g(xy)$ for all $x, y\in Q$. In case $\a = \b = \g$, then we arrive at the notion of a ``quasigroup homomorphism''. On the other hand, a ``quasigroup isotopy'' is a quasigroup homotopy $(\a,\b,\g)$ such that all three maps are bijective.

%The quasigroup terminology we shall need in the sequel is recalled here from \cite{Shcherbacov-book}, see also \cite{Schc09}.
%
%\begin{definition}
%A quasigroup $Q$ is called
%\begin{itemize}
%\item[(i)] ``medial'' if $(xy)(uv) = (xu)(yv)$, for all $x,y,u,v \in Q$,
%\item[(ii)] ``left distributive'' if $x(uv) = (xu)(xv)$, for all $x,u,v \in Q$,
%\item[(iii)] ``right distributive'' if $(uv)y = (uy)(vy)$, for all $u,v,y \in Q$,
%\item[(iv)] ``distributive'' if it is both left and right distributive,
%\item[(v)] ``idempotent'' if $xx = x$, for all $x \in Q$,
%\item[(vi)] ``TS-quasigroup'' if $x(xy) = y$, and $xy=yx$, for all $x,y \in Q$,
%\item[(vii)] ``left F-quasigroup'' if $x(yz) = (xy)((x\backslash x)z)$, for all $x,y,z\in Q$,
%\item[(viii)] ``right F-quasigroup'' if $(xy)z = (x(z\slash z))(yz)$, for all $x,y,z\in Q$,
%\item[(ix)] ``F-quasigroup'' if it is both a left and right F-quasigroup,
%\item[(x)] ``flexible'' if $x(yx) = (xy)x$, for all $x,y \in Q$,
%\item[(xi)] ``alternative'' if $x(xy) = (xx)y$ and $x(yy) = (xy)y$, for all $x,y \in Q$.
%\end{itemize}
%\end{definition}

A quasigroup $Q$ with a distinguished idempotent element $\d \in Q$ is called a ``pointed idempotent quasigroup'', or in short, a ``pique'', \cite{CheinPflugfelderSmith-book}. A pique $(Q,\d)$ is called a ``loop'' if the idempotent element $\d \in Q$ acts like an identity, \ie
\[
x\d = \d x = x
\]
for any $x \in Q$. It, then, follows that the idempotent element $\d \in Q$ is unique, and that any $x \in Q$ has a unique left inverse
\[
x^\lambda := \d \slash x, \qquad x^\lambda x = \d
\]
as well as a unique right inverse
\[
x^\s := x \backslash \d, \qquad xx^\s = \d.
\]
A loop $Q$ is said to have two-sided inverses if $x^\lambda = x^\s$ for all $x \in Q$. Furthermore, a loop $Q$ is said to have the ``left inverse property'' if
\[
x^\lambda(xy) = y, \qquad \forall \, x,y \in Q,
\]
and similarly $Q$ is said to have the ``right inverse property'' if
\[
(yx)x^\s = y, \qquad \forall \, x,y \in Q.
\]
Finally, a loop is said to have the ``inverse property'' if it has both the left inverse property and the right inverse property. Such loops are also called the ``IP loops''.

Given a pique $(Q,\d)$, there corresponds a loop $B(Q)$ - called the ``corresponding loop'' or ``cloop'' - with the multiplication
\[
x \ast y := (x\slash \d)(\d\backslash y),
\]
for any $x,y \in Q$, and the identity element $\d \in Q$. We note that it is possible to recover the multiplication on a pique from the one on the cloop as
\[
xy := (x\d)\ast (\d y),
\]
see, for instance, \cite{RomaSmit06}. 

Finally, a pique $(Q,\d)$ is called ``central'' if $B(Q)$ is an abelian group, and the set of all left and right multiplications of $Q$ that fix the idempotent element $\d \in Q$ is the group ${\rm Aut}\big(B(Q)\big)$.

A convenient way to construct quasigroups, out of groups, is the cocycle-type group extensions, \cite{Belo65}, see also \cite[Subsect. 1.6.2]{Shcherbacov-book}. 

\begin{example}\label{ex-quasigr-construction}
Let $G$ be a group, $(V,+)$ an abelian group with a right action $\lt:V\times G \to V$, $(v,x)\mapsto v\lt x$. Then, given any $\vp:G\times G\to V$, the operation
\begin{equation}\label{multp-quasigr}
(x,v)(x',v') := (xx', \vp(x,x') + v\lt x' + v')
\end{equation}
is associative on $G \ltimes_\vp V: = G\times V$ if and only if 
\begin{equation}\label{cocycle-cond}
d\vp (x,x',x'') := \vp(x',x'') - \vp(xx',x'') + \vp(x,x'x'') - \vp(x,x')\lt x'' = 0,
\end{equation}
that is, $\vp:G\times G \to V$ is 2-cocycle in the group cohomology of $G$, with coefficients in $V$; in other words, $\vp\in H^2(G,V)$. As such, giving up the cocycle condition \eqref{cocycle-cond} we arrive at the quasigroup $G \ltimes_\vp V$ with the multiplication \eqref{multp-quasigr}.
\end{example}

Similarly, we may construct a loop.

\begin{example}\label{ex-loop}
Considering the quasigroup $G \ltimes_\vp V$ of Example \ref{ex-quasigr-construction}, we see at once that $(1,0) \in G \ltimes_\vp V$ acts as unit, with respect to \eqref{multp-quasigr}, if and only if
\begin{equation}\label{quasi-0}
\vp(1,x) = 0 = \vp(x,1)
\end{equation}
for any $x\in G$. Hence, given a group $G$, an abelian group $(V,+)$ with a right action $\lt:V\times G \to V$, and a mapping $\vp:G\times G \to V$ satisfying \eqref{quasi-0} is a loop.
\end{example}

We shall, for the sake of simplicity, drop the right action (that is, we shall assume the right action to be trivial) on the sequel, and consider the examples of the form $G\times_\vp V$, with the multiplication
\begin{equation}\label{multp-quasigr-II}
(x,v)(x',v') := (xx', \vp(x,x') + v + v').
\end{equation}

\subsection{Inverse properties on quasigroups}\label{subsect-IPQ}~

In the present subsection we shall recall the inverse properties on quasigroups, and in particular, on loops.

Along the lines of \cite{KeedShch03}, see also \cite{Baer39}, a loop $Q$ is said to have the ``weak-inverse property'' (WIP) if there is a permutation $J:Q \to Q$ such that 
\begin{equation}\label{perm-J}
xJ(x) = \d,
\end{equation}
and that
\begin{equation}\label{weak-inv-prop}
xJ(yx) = J(y),
\end{equation}
for any $x,y \in Q$. Dropping the condition \eqref{perm-J}, a quasigroup with \ref{weak-inv-prop} is called a WIP quasigroup. 

Similarly, a loop / quasigroup $Q$ is said to have the ``crossed-inverse property'' (CI property) if \eqref{weak-inv-prop} is replaced by
\begin{equation}\label{crossed-inv-prop}
(xy)J(x) = y.
\end{equation}
We refer the reader to \cite{Keed99} for the applications of the CI quasigroups in cryptography. 

On the other hand, the loop / quasigroup $Q$ has the ``$m$-inverse property'' if \eqref{weak-inv-prop}, or \eqref{crossed-inv-prop}, is now substituted with 
\begin{equation}\label{m-inv}
J^m(xy)J^{m+1}(x) = J^m(y),
\end{equation}
where $m\in \B{Z}$, \cite{KarkKark76}. 

Finally, we recall that the loop / quasigroup $Q$ is said to have the ``$(r,s,t)$-inverse property'' if \eqref{weak-inv-prop}, \eqref{crossed-inv-prop}, or \eqref{m-inv}, is exchanged with 
\begin{equation}\label{rst-inv}
J^r(xy)J^s(x) = J^t(y),
\end{equation}
where $r,s,t \in \B{Z}$, \cite{KeedShch03}.

\begin{remark}
The condition \eqref{rst-inv} generalizes those given by \eqref{weak-inv-prop}, \eqref{crossed-inv-prop}, or \eqref{m-inv}. More precisely, the weak-inverse property is a $(-1,0,-1)$-inverse property, \cite{KeedShch03}, and a crossed-inverse property is nothing but a 0-inverse property; where, in general an $m$-inverse property is an $(m,m+1,m)$-inverse property. 

On the other hand, it is observed in \cite{KeedShch02} that every $(r,s,t)$-inverse loop is an $(r,r+1,r)$-inverse loop, that is, an $r$-inverse loop. Though, on the level of quasigroups, there are proper $(r,s,t)$-inverse quasigroups, \cite{KeedShch03}.
\end{remark}

\begin{remark}\label{rk-m+uh}
It is critical to recall from \cite[Rk. 2.2]{KeedShch03} that if $Q$ is an $(r,s,t)$-inverse quasigroup with the permutation $J:Q \to Q$ so that $J^h\in {\rm Aut}(Q)$, then $Q$ is an $(r+uh,s+uh,t+uh)$-inverse quasigroup for any $u\in \B{Z}$. 
\end{remark}

Let us finally discuss an odd-invertible loop.
\begin{example}\label{ex-odd-invtble-loop}
Let us consider the loop $G\times_\vp V$ of Example \ref{ex-loop}. Let also
\begin{equation}\label{odd-invtble-J}
J:G\times_\vp V \to G\times_\vp V, \qquad J(x,v) := (x^{-1},-v).
\end{equation}
It is quite clear then that $J^2 = \Id_{G\times V} \in {\rm Aut}(G\times_\vp V)$. Accordingly, we see at once that
\[
(x,v)J(x,v) = (1,0)
\]
if and only if
\begin{equation}\label{quasi-I}
\vp(x,x^{-1}) = 0
\end{equation}
for any $x\in G$, and that for any $m=2\ell+1$,
\[
J^m((x,v)(x',v'))J^{m+1}(x,v) = J^m(x',v')
\]
if and only if
\[
J((x,v)(x',v'))(x,v) = J(x',v'),
\]
if and only if
\begin{equation}\label{quasi-II}
\vp(x'^{-1}x^{-1}, x) = \vp(x,x')
\end{equation}
for any $x,x'\in G$. 

To sum up, we may say that given any group $G$, an abelian group $(V,+)$, and any $\vp:G\times G\to V$ satisfying \eqref{quasi-0}, \eqref{quasi-I}, and \eqref{quasi-II}, $G\times_\vp V$ is an $(2\ell+1)$-invertible loop with \eqref{odd-invtble-J} for any $\ell \in \B{Z}$.
\end{example}

\section{Matched pairs of $m$-invertible loops}\label{sect-matched-pair-quasigr}

In this subsection we shall introduce the matched pair theory for the quasigroups with the $m$-inverse property. The theory that we shall develop here will thus generalize the direct product theory in \cite[Sect. 5]{KeedShch03}, and the semi-direct product theory in \cite[Sect. 1.6.2]{Shcherbacov-book} for quasigroups. 

\subsection{Direct products of quasigroups}\label{subsect-direct-prod-quasigr}~

To this end we shall first recall the direct product theory from \cite[Sect. 5]{KeedShch03} . In the utmost generality, let $Q_1$ be an $(r_1,s_1,t_1)$-inverse quasigroup with the permutation $J_1:Q_1\to Q_1$ , and let $Q_2$ an $(r_2,s_2,t_2)$-inverse quasigroup with $J_2:Q_2\to Q_2$. Then the direct product $Q_1\times Q_2$ is defined to be the quasigroup with the permutation $J_1\times J_2:Q_1\times Q_2 \to Q_1 \times Q_2$, and the multiplication given by $(q_1,q_2)(q'_1,q'_2) := (q_1q'_1,q_2q'_2)$.

Along the lines of \cite[Sect. 5]{KeedShch03}, let $J_1^{h_1} \in {\rm Aut}(Q_1)$ and $J_2^{h_2} \in {\rm Aut}(Q_2)$. In the case that $Q_1$ is an $m_1$-inverse quasigroup and $Q_2$ is an $m_2$-inverse quasigroup, the structure of $Q_1\times Q_2$ is given in \cite[Thm. 5.1]{KeedShch03}, that we recall below.

\begin{theorem}\label{thm-quasigr-direct-prod}
Let $Q_1$ is an $m_1$-inverse quasigroup with the permutation $J_1:Q_1 \to Q_1$ so that $J_1^{h_1} \in {\rm Aut}(Q_1)$, and $Q_2$ is an $m_2$-inverse quasigroup with $J_2:Q_2 \to Q_2$ such that $J_2^{h_2} \in {\rm Aut}(Q_2)$. Then $Q_1 \times Q_2$ is an $m$-inverse quasigroup with $J_1\times J_2: Q_1 \times Q_2 \to Q_1 \times Q_2$, for any $m \in \B{Z}$ that satisfies 
\begin{align}\label{cong-eqn}
\begin{split}
& m \equiv m_1\,\, ({\rm mod}\,h_1), \\
& m \equiv m_2\,\, ({\rm mod}\,h_2).
\end{split}
\end{align}
\end{theorem}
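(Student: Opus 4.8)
The plan is to observe that the defining $m$-inverse identity for $Q_1\times Q_2$ decouples into two independent identities, one on each factor, and then to upgrade each factor from an $m_i$-inverse quasigroup to an $m$-inverse quasigroup by means of Remark \ref{rk-m+uh}.

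First I would record that the permutation on the product is $J_1\times J_2$, whose powers factor as $(J_1\times J_2)^k=J_1^k\times J_2^k$ for every $k\in\B{Z}$. Combined with the componentwise multiplication $(q_1,q_2)(q'_1,q'_2)=(q_1q'_1,q_2q'_2)$, the defining relation \eqref{m-inv} for $Q_1\times Q_2$, namely
\[
(J_1\times J_2)^m\big((q_1,q_2)(q'_1,q'_2)\big)\,(J_1\times J_2)^{m+1}(q_1,q_2)=(J_1\times J_2)^m(q'_1,q'_2),
\]
holds for all elements if and only if both of the component relations
\[
J_1^m(q_1q'_1)\,J_1^{m+1}(q_1)=J_1^m(q'_1),\qquad J_2^m(q_2q'_2)\,J_2^{m+1}(q_2)=J_2^m(q'_2)
\]
hold; that is, if and only if $Q_1$ is $m$-inverse with respect to $J_1$ and $Q_2$ is $m$-inverse with respect to $J_2$. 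Thus it suffices to verify that each factor is $m$-inverse.

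For this I would invoke Remark \ref{rk-m+uh}. Writing the $m_i$-inverse property of $Q_i$ as the $(m_i,m_i+1,m_i)$-inverse property and using $J_i^{h_i}\in{\rm Aut}(Q_i)$, the remark yields that $Q_i$ is $(m_i+u_ih_i,\,m_i+1+u_ih_i,\,m_i+u_ih_i)$-inverse for every $u_i\in\B{Z}$. The point to notice is that this shifted triple is again of the shape $(m',m'+1,m')$ with $m'=m_i+u_ih_i$, so it is exactly the $(m_i+u_ih_i)$-inverse property. Now the congruence $m\equiv m_i\ ({\rm mod}\,h_i)$ from \eqref{cong-eqn} lets me pick $u_i\in\B{Z}$ with $m=m_i+u_ih_i$, whence $Q_i$ is $m$-inverse with respect to $J_i$.

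The argument is essentially bookkeeping, and the only subtle step — the one I would be careful about — is checking that the uniform shift in Remark \ref{rk-m+uh} preserves the special $(m,m+1,m)$ shape of the $m$-inverse property, since this is precisely what allows the two congruences \eqref{cong-eqn} to do their work simultaneously and independently. Once both factors are seen to be $m$-inverse, the decoupling of the first step shows that $Q_1\times Q_2$ is an $m$-inverse quasigroup with permutation $J_1\times J_2$, as claimed.
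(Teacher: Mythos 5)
Your proof is correct. Note that the paper itself does not prove this statement but only recalls it from \cite[Thm.\ 5.1]{KeedShch03}; your argument --- decoupling the $m$-inverse identity componentwise under $(J_1\times J_2)^k = J_1^k\times J_2^k$ and then shifting each factor from $m_i$-inverse to $m$-inverse via Remark \ref{rk-m+uh}, which preserves the $(m,m+1,m)$ shape --- is precisely the standard argument behind that cited result, with the congruences \eqref{cong-eqn} supplying the required shifts $u_ih_i$.
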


As is noted in the proof of  \cite[Thm. 5.1]{KeedShch03}, a solution to \eqref{cong-eqn} exists if and only if there is $\ell \in \B{N}$ such that $m_1-m_2 = (h_1,h_2)\ell$. Here $(h_1,h_2)$ refers to the greatest common divisor of $h_1 \in \B{Z}$ and $h_2 \in \B{Z}$.

If, on the other hand, $Q_1$ is an $(r_1,s_1,t_1)$-inverse quasigroup, and $Q_2$ is an $(r_2,s_2,t_2)$-inverse quasigroup, the structure of the direct product is given by \cite[Thm. 5.2]{KeedShch03}, which we recall now.

\begin{theorem}\label{thm-quasigr-direct-prod-II}
Let $Q_1$ is an $(r_1,s_1,t_1)$-inverse quasigroup with the permutation $J_1:Q_1 \to Q_1$ so that $J_1^{h_1} \in {\rm Aut}(Q_1)$, and $Q_2$ is an $(r_2,s_2,t_2)$-inverse quasigroup with $J_2:Q_2 \to Q_2$ such that $J_2^{h_2} \in {\rm Aut}(Q_2)$. Then $Q_1 \times Q_2$ is an $(r,s,t)$-inverse quasigroup with $J_1\times J_2: Q_1 \times Q_2 \to Q_1 \times Q_2$, if there are $u_1,u_2\in \B{Z}$ such that
\[
r-r_1 = s-s_1 = t-t_1 = u_1h_1, \qquad r-r_2 = s-s_2 = t-t_2 = u_2h_2.
\]
\end{theorem}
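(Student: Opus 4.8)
The plan is to reduce the assertion to Remark \ref{rk-m+uh} followed by a purely componentwise verification on the direct product. The point is that the two hypotheses
\[
r-r_1 = s-s_1 = t-t_1 = u_1h_1, \qquad r-r_2 = s-s_2 = t-t_2 = u_2h_2
\]
are engineered precisely so that, after the appropriate shift, each factor becomes an $(r,s,t)$-inverse quasigroup with \emph{one and the same} triple $(r,s,t)$. Once both factors carry the identical inverse-type, the direct-product property drops out because both the multiplication and the permutation $J_1\times J_2$ act coordinatewise. I would also record at the outset that $J_1\times J_2$ is a permutation of $Q_1\times Q_2$, since $J_1$ and $J_2$ are permutations, and that $(J_1\times J_2)^n = J_1^n\times J_2^n$ for every $n\in\B{Z}$.

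First I would apply Remark \ref{rk-m+uh} to $Q_1$. Since $Q_1$ is $(r_1,s_1,t_1)$-inverse with $J_1^{h_1}\in{\rm Aut}(Q_1)$, the remark gives that $Q_1$ is $(r_1+u_1h_1,\,s_1+u_1h_1,\,t_1+u_1h_1)$-inverse for the integer $u_1$ supplied by the hypothesis; but by that very hypothesis $r_1+u_1h_1=r$, $s_1+u_1h_1=s$ and $t_1+u_1h_1=t$, so $Q_1$ is $(r,s,t)$-inverse with respect to $J_1$. The same argument applied to $Q_2$, using $J_2^{h_2}\in{\rm Aut}(Q_2)$ and the integer $u_2$, shows that $Q_2$ is $(r,s,t)$-inverse with respect to $J_2$. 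Thus both factors now satisfy $J_i^{r}(q_iq_i')\,J_i^{s}(q_i)=J_i^{t}(q_i')$ for the common triple $(r,s,t)$.

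Then the verification is immediate. Writing $J:=J_1\times J_2$ and using that products are taken coordinatewise together with $J^n=J_1^n\times J_2^n$, for any $(q_1,q_2),(q_1',q_2')\in Q_1\times Q_2$ I compute
\begin{align*}
J^r\big((q_1,q_2)(q_1',q_2')\big)\,J^s(q_1,q_2)
&= \big(J_1^r(q_1q_1'),\,J_2^r(q_2q_2')\big)\big(J_1^s(q_1),\,J_2^s(q_2)\big) \\
&= \big(J_1^r(q_1q_1')\,J_1^s(q_1),\; J_2^r(q_2q_2')\,J_2^s(q_2)\big) \\
&= \big(J_1^t(q_1'),\,J_2^t(q_2')\big) = J^t(q_1',q_2'),
\end{align*}
where the third equality uses the $(r,s,t)$-inverse property of each factor established in the previous step. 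This is exactly the $(r,s,t)$-inverse property for $Q_1\times Q_2$ with the permutation $J_1\times J_2$.

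The main obstacle is essentially bookkeeping rather than substance: the only real content is recognizing that the stated divisibility hypotheses are exactly what is needed to invoke Remark \ref{rk-m+uh} so as to align the two triples to a common $(r,s,t)$. I would emphasize the contrast with Theorem \ref{thm-quasigr-direct-prod}, where one had to \emph{solve} the congruences \eqref{cong-eqn} (via the greatest common divisor of $h_1,h_2$) to produce such shifts; here the compatibility integers $u_1,u_2$ are assumed to exist outright, so no congruence-solvability discussion is required, and the remainder of the proof is the routine coordinatewise computation above.
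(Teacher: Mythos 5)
Your proof is correct, and it is essentially the intended argument: the paper itself states this theorem without proof, recalling it from \cite[Thm.~5.2]{KeedShch03}, whose proof proceeds exactly as you do --- use Remark~\ref{rk-m+uh} with the hypothesized integers $u_1,u_2$ to promote each factor to a common $(r,s,t)$-inverse type, then verify the identity $J^r\big((q_1,q_2)(q_1',q_2')\big)J^s(q_1,q_2)=J^t(q_1',q_2')$ coordinatewise for $J=J_1\times J_2$. Your closing observation correctly identifies the difference from Theorem~\ref{thm-quasigr-direct-prod}: there the shifts must be produced by solving the congruences \eqref{cong-eqn}, whereas here their existence is assumed outright.
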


\subsection{Semi-direct products of $m$-invertible loops}\label{subsect-semi-direct-prod-quasigr}~

As for the semi-direct products of quasigroups, there seems to be no approach involving the notion of an action of a quasigroup on another. A semi-direct product construction, using groups, is the one given in \cite{RobiRobi93,KargapolovMerzljakov-book}, see also \cite[Sect. 1.6.2]{Shcherbacov-book} which we recall below.

\begin{proposition}\label{prop-semi-direct-quasi}
Let $(G,+)$ and $(H,\cdot)$ be two groups, and $\t:G\to {\rm Aut}(H)$. Then, $G\times H$ is a quasigroup with the multiplication
\[
(g,h)(g',h') := (g+g', \t(g')(h)\cdot h').
\]
\end{proposition}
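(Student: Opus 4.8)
The plan is to verify directly that the proposed operation turns $G \times H$ into a quasigroup, \ie that for every pair of elements the left-division and right-division equations have unique solutions. Recall that a set with multiplication is a quasigroup precisely when, for all $a,b$, the equations $ax = b$ and $ya = b$ each admit a unique solution; so I would fix target elements $(a,c),(b,d) \in G\times H$ and solve the two equations componentwise, exploiting that $(G,+)$ and $(H,\cdot)$ are genuine groups (hence already quasigroups) and that each $\t(g')$ is an \emph{automorphism}, in particular a bijection of $H$.

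First I would treat the left-division equation $(a,c)(x_1,x_2) = (b,d)$. Unfolding the definition of the multiplication, this reads
\begin{align*}
& a + x_1 = b, \\
& \t(x_1)(c)\cdot x_2 = d.
\end{align*}
The first coordinate is an equation in the group $(G,+)$, which forces $x_1 = b - a = -a + b$ uniquely. Substituting this value of $x_1$ into the second coordinate, the map $\t(x_1)$ is now a \emph{fixed} automorphism of $H$, so $\t(x_1)(c)$ is a determined element of $H$; the second equation then becomes $\t(x_1)(c)\cdot x_2 = d$ in the group $(H,\cdot)$, which has the unique solution $x_2 = \big(\t(x_1)(c)\big)^{-1}\cdot d$. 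Thus $(x_1,x_2)$ exists and is unique, establishing left division.

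Next I would treat the right-division equation $(y_1,y_2)(a,c) = (b,d)$, which unfolds to
\begin{align*}
& y_1 + a = b, \\
& \t(a)(y_2)\cdot c = d.
\end{align*}
Again the first coordinate is solved uniquely in $(G,+)$ by $y_1 = b - a$. For the second coordinate, the relevant automorphism $\t(a)$ is determined from the outset (it depends only on the fixed element $a$), so the equation $\t(a)(y_2)\cdot c = d$ in $(H,\cdot)$ gives $\t(a)(y_2) = d\cdot c^{-1}$, and since $\t(a)$ is a bijection of $H$ we recover $y_2 = \t(a)^{-1}\big(d\cdot c^{-1}\big)$ uniquely. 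Hence right division holds as well, and $G\times H$ is a quasigroup.

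There is no serious obstacle here, since both factors are groups and the only non-grouplike ingredient, the twisting map $\t$, enters each division equation through an automorphism that is already pinned down before one solves in $H$; this is exactly what keeps the second-coordinate equations linear (in the quasigroup sense) and uniquely solvable. The one point that warrants care is the asymmetry between the two equations: in left division the twist $\t(x_1)$ depends on the unknown first coordinate, so the argument must solve the $G$-coordinate \emph{first} and only then substitute into the $H$-coordinate, whereas in right division the twist $\t(a)$ is given directly. I would therefore present the two solvability computations in that order, emphasizing that uniqueness in each coordinate follows from the corresponding group axioms together with the bijectivity of $\t(g')$.
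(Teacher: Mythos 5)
Your proof is correct and is exactly the expected argument: the paper states this proposition without proof (recalling it from \cite{RobiRobi93,KargapolovMerzljakov-book,Shcherbacov-book}), and the direct componentwise verification of unique left and right division --- solving the $G$-coordinate first, so that the twisting automorphism $\theta(x_1)$ is pinned down before one solves in $H$, while in right division $\theta(a)$ is fixed from the outset and bijectivity gives $y_2 = \theta(a)^{-1}(d\cdot c^{-1})$ --- is the standard justification, and you correctly identify that only bijectivity of each $\theta(g')$ is used. The single blemish is the chained equality $x_1 = b - a = -a + b$: for $a + x_1 = b$ in additive but not necessarily abelian notation the unique solution is $-a+b$, which equals $b-a$ only when $G$ is abelian, though this slip affects neither existence nor uniqueness.
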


The construction given in \cite{Sabinin-book} uses a quasigroup, and its transassociant.

\begin{proposition}\label{prop-semi-direct-quasi-II}
Let $Q$ be a quasigroup, and $H$ be the group generated by $\{\ell(q,q')\mid q,q' \in Q\}$, where $\ell(q,q') := L^{-1}_{qq'} \circ L_q \circ L_{q'}$, and $L_q:Q \to Q$, $L_q(r) := qr$, is the left translation. Then, $Q\times H$ is a quasigroup with the multiplication given by
\[
(q,h)(q',h') := (qh(q'), \ell(q,h(q'))\circ m_{q'}(h)\circ h \circ h'),
\]
where, for any $q\in Q$ and any $h \in H$,
\[
m_q(h):=L^{-1}_{h(q)}\circ h\circ L_q\circ h^{-1}.
\]
\end{proposition}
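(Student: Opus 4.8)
The plan is to show that both translations on $Q\times H$ are bijective, i.e.\ that each of $(q,h)(q',h')=(p,k)$ (left division) and $(q',h')(q,h)=(p,k)$ (right division) has a unique solution. Everything is organized around the left multiplication group ${\rm LMlt}(Q):=\langle L_a\mid a\in Q\rangle\leq {\rm Sym}(Q)$, which contains $H$ because $\ell(q,q')=L_{qq'}^{-1}\circ L_q\circ L_{q'}$. The first task is to check that the product is even well defined, namely that $m_{q'}(h)\in H$. For this I would introduce, for $g\in {\rm LMlt}(Q)$ and $x\in Q$, the element $T_x(g):=L_{g(x)}^{-1}\circ g\circ L_x$ and prove $T_x(g)\in H$. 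This follows from the base case $T_x(L_a)=\ell(a,x)$ together with the cocycle identity
\[
T_x(g_1g_2)=T_{g_2(x)}(g_1)\circ T_x(g_2),
\]
by induction on the word length of $g$ in the generators $L_a^{\pm1}$ (the inverse case uses $T_x(g^{-1})=T_{g^{-1}(x)}(g)^{-1}$). Since $m_q(h)=T_q(h)\circ h^{-1}$ and $h\in H\subseteq {\rm LMlt}(Q)$, this yields $m_q(h)\in H$, so the multiplication lands in $Q\times H$.

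The structural device I would then exploit is the map $\Phi:Q\times H\to {\rm LMlt}(Q)$, $\Phi(q,h):=L_q\circ h$. Using $m_{q'}(h)\circ h=L_{h(q')}^{-1}\circ h\circ L_{q'}$ and $\ell(q,h(q'))=L_{q\,h(q')}^{-1}\circ L_q\circ L_{h(q')}$, the second coordinate of the product telescopes to $L_{q\,h(q')}^{-1}\circ L_q\circ h\circ L_{q'}\circ h'$, whence
\[
\Phi\big((q,h)(q',h')\big)=L_q\circ h\circ L_{q'}\circ h'=\Phi(q,h)\circ\Phi(q',h');
\]
that is, $\Phi$ is multiplicative. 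Granting this, left division is straightforward bookkeeping: the first coordinate $q\,h(q')=p$ forces $q'=h^{-1}(q\backslash p)$ uniquely, and with $q'$ fixed the second coordinate reads $A\circ h'=k$ with $A:=\ell(q,h(q'))\circ m_{q'}(h)\circ h\in H$, so $h'=A^{-1}\circ k\in H$ is the unique solution.

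The main obstacle is right division, where the two unknowns $q'$ and $h'$ are coupled through the first coordinate $q'\,h'(q)=p$. Here I would again pass through $\Phi$: the equation forces
\[
\Phi(q',h')=\Phi(p,k)\circ\Phi(q,h)^{-1}=L_p\circ k\circ h^{-1}\circ L_q^{-1}=:N\in {\rm LMlt}(Q),
\]
so the problem reduces to factoring $N$ uniquely as $N=L_{q'}\circ h'$ with $q'\in Q$ and $h'\in H$, and then verifying the first-coordinate compatibility $N(q)=p$. These two points carry the real content. The unique factorization is precisely the assertion ${\rm LMlt}(Q)=\bigsqcup_{a\in Q}L_a\,H$, and the compatibility unwinds to $k\big(h^{-1}(q\backslash q)\big)=p\backslash p$. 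I expect this to be the delicate heart of the argument: both statements become transparent once one identifies a distinguished element $\delta\in Q$ fixed by all of $H$ and satisfying $q\backslash q=\delta$ for every $q$ — the situation in which $H$ is the stabiliser of $\delta$ in ${\rm LMlt}(Q)$. In that case $q'=N(\delta)$ and $h'=L_{q'}^{-1}\circ N$ recover the unique factorisation, and $N(q)=L_p\big(k(h^{-1}(\delta))\big)=L_p(\delta)=p$ makes the compatibility automatic, completing right division and hence the proof.
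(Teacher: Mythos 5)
Your preparatory steps are correct, and unconditionally so: the cocycle identity for $T_x(g)=L_{g(x)}^{-1}\circ g\circ L_x$ does give $m_{q'}(h)\in H$, the map $\Phi(q,h)=L_q\circ h$ is multiplicative exactly by the telescoping you describe, and left division works for every quasigroup $Q$. (Note that the paper offers no proof to compare with: Proposition \ref{prop-semi-direct-quasi-II} is recalled from Sabinin's book without argument.) The genuine gap is the one you flagged yourself at right division, and it cannot be closed at the stated level of generality: the factorization $\mathrm{LMlt}(Q)=\bigsqcup_{a\in Q}L_a\,H$ is false for general quasigroups, and in fact the proposition itself fails. Take $Q=\mathbb{Z}_3$ with $x*y:=2x+y$. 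Then $L_x$ is the translation $r\mapsto r+2x$, so $\mathrm{LMlt}(Q)\cong\mathbb{Z}_3$, while a direct computation gives $\ell(q,q')\colon r\mapsto r+q$ for all $q'$, so $H=\mathrm{LMlt}(Q)$ and the factorization $g=L_ah$ is three-fold ambiguous ($|Q\times H|=9>3=|\mathrm{LMlt}(Q)|$). Writing $t_c$ for the translation by $c$, one finds $m_{q'}(t_c)=t_c$, so the proposed product is
\[
(q,t_c)(q',t_{c'})=\big(2q+q'+c,\; t_{q+2c+c'}\big),
\]
and solving $(x,t_y)(q,t_c)=(p,t_k)$ amounts to $2x+y=p-q$ and $x+2y=k-c$, a singular linear system over $\mathbb{Z}_3$ (the two left-hand sides always sum to $0$); so right division fails and $Q\times H$ is not a quasigroup.

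Your instinct that the ``delicate heart'' is a distinguished element $\d$ with $q\backslash q=\d$ for all $q$ and $H=\mathrm{Stab}_{\mathrm{LMlt}(Q)}(\d)$ is therefore exactly right: those hypotheses are where the statement actually lives, namely in Sabinin's setting where $Q$ is a \emph{loop} with identity $e$. There every generator satisfies $\ell(q,q')(e)=(qq')\backslash\big(q(q'e)\big)=e$, so $H\leq\mathrm{Stab}(e)$, which yields disjointness of the cosets $L_aH$ (evaluate at $e$); but the equality $H=\mathrm{Stab}(e)$, which you need for the \emph{existence} of the factorization of $N=L_p\circ k\circ h^{-1}\circ L_q^{-1}$, is a classical generation lemma going back to Bruck (the stabilizer of $e$ in the left multiplication group of a loop is generated by the maps $L_{xy}^{-1}L_xL_y$) --- a genuine theorem you would have to quote or prove, not an optional simplification. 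With those two ingredients your outline does complete, and in fact $\Phi$ becomes a multiplicative bijection onto the group $\mathrm{LMlt}(Q)$, so $Q\times H$ is then even a group. As written for an arbitrary quasigroup, however, the argument cannot be repaired, because the statement is false there.
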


Let us note also that this was the point of view considered in \cite{KlimMaji10-II,KlimMaji10}.

None of these constructions lead to a possible discussion on the matched pairs of quasigroups. We thus adopt the following (more general, given in terms of quasigroup homomorphisms) definition given in \cite[Def. 1.318]{Shcherbacov-book}.

\begin{definition}\label{def-semi-direct}
A quasigroup $Q$ is called the semi-direct product of two quasigroups $R$ and $S$, if there is a (quasigroup) homomorphism $h:Q \to S$, such that the kernel $\ker(h) =R$, and that $h|_S = \Id_S$. In this case, $Q $ is denoted by $R \rtimes S$.
\end{definition}

The motivating examples are the ones discussed within the following propositions below, on the level of ($m$-inverse) loops, and Hom-groups.

\begin{proposition}\label{prop-semi-direct-quasi-III}
Let $R$ and $S$ be two loops, and let $\vp:S \times R \to R $ be a map satisfying $\vp(s,\d) = \d$ and $\vp(\d,r) = r$. Then a loop $Q$ is isomorphic to the loop $R \rtimes S:=R \times S$ with the multiplication given by
\begin{equation}\label{semi-direct-multp}
(r,s)(r',s') := (r\vp(s,r'), ss'),
\end{equation}
if and only if there are quasigroup homomorphisms $i_S:S \to Q$, $i_R:R \to Q$, $p_S:Q \to S$, and a map $p_R:Q \to R$ satisfying the Moufang-type identities 
\begin{align}\label{p_R-Moufang-I}
\begin{split}
& p_R\big(\big(i_R(r)i_S(s)\big)\big(i_R(r')i_S(s')\big)\big) = p_R(i_R(r))\Big(\big(p_R(i_S(s)i_R(r'))\big)p_R(i_S(s'))\Big) = \\
&\Big(p_R(i_R(r))\big(p_R(i_S(s)i_R(r'))\big)\Big)p_R(i_S(s')),
\end{split}
\end{align}
as well as $p_R\circ i_R = \Id_R$ and $p_S\circ i_S = \Id_S$, such that $R\rtimes S \to Q$, $(r,s) \mapsto i_R(r)i_S(s)$ and $Q \to R\rtimes S$, $q\mapsto (p_R(q),p_S(q))$ are inverse to each other.
\end{proposition}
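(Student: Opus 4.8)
The plan is to prove the two implications separately. Throughout I would use the elementary fact that a homomorphism of loops carries the identity to the identity: the image of the idempotent $\d$ is again idempotent, and in a loop the only idempotent is the identity (if $e\,e=e=e\,\d$ then $e=\d$ by uniqueness of division). In particular $i_R(\d)=\d$ and $i_S(\d)=\d$ in $Q$.

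For the forward implication I would assume $Q\cong R\rtimes S$ and, transporting all data along the isomorphism, reduce to the case $Q=R\rtimes S$. There the natural choices $i_R(r):=(r,\d)$, $i_S(s):=(\d,s)$, $p_R(r,s):=r$ and $p_S(r,s):=s$ work, and every required property is a direct check from \eqref{semi-direct-multp} together with the normalization $\vp(s,\d)=\d$, $\vp(\d,r)=r$: the maps $i_R,i_S,p_S$ are homomorphisms, $p_R\circ i_R=\Id_R$ and $p_S\circ i_S=\Id_S$, the two composites are both the identity of $R\rtimes S$ and hence mutually inverse, and the Moufang identities \eqref{p_R-Moufang-I} hold because $p_R(i_S(s'))=\d$ trivializes the third factor, so the two bracketings coincide.

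For the reverse implication I would first extract from $\Psi\circ\Phi=\Id$ the relations $p_R(i_R(r)i_S(s))=r$ and $p_S(i_R(r)i_S(s))=s$; specializing $r=\d$ and $s=\d$ gives $p_R(i_S(s))=\d$ and $p_S(i_R(r))=\d$, and it follows that $\vp(s,r):=p_R(i_S(s)i_R(r))$ satisfies $\vp(s,\d)=\d$ and $\vp(\d,r)=r$, matching the prescribed normalization. Since $\Phi$ is bijective, every $q\in Q$ is uniquely of the form $i_R(r)i_S(s)$, so it suffices to show that $\Psi$ (equivalently its inverse $\Phi$) is a homomorphism. For $q=i_R(r)i_S(s)$ and $q'=i_R(r')i_S(s')$ the $S$-component is immediate, $p_S(qq')=p_S(q)p_S(q')=ss'$, since $p_S$ is a homomorphism. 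For the $R$-component I would invoke \eqref{p_R-Moufang-I} to compute
\[
p_R(qq')=p_R(i_R(r))\big(p_R(i_S(s)i_R(r'))\,p_R(i_S(s'))\big)=r\,p_R(i_S(s)i_R(r'))=r\,\vp(s,r'),
\]
using $p_R\circ i_R=\Id_R$, $p_R(i_S(s'))=\d$, and $x\d=x$. Comparing with \eqref{semi-direct-multp}, this yields $\Psi(qq')=\Psi(q)\Psi(q')$, so the bijection $\Psi$ is an isomorphism and $Q\cong R\rtimes S$.

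The main obstacle is the non-associativity of $Q$: the mixed product $(i_R(r)i_S(s))(i_R(r')i_S(s'))$ cannot be re-bracketed at will, so the value of $p_R$ on it must be obtained entirely through the Moufang-type identities \eqref{p_R-Moufang-I}. The decisive simplification is $p_R(i_S(s'))=\d$, which removes the $s'$-dependence of the third factor and collapses both admissible bracketings to the single value $r\,\vp(s,r')$; identifying the prescribed $\vp$ with $p_R(i_S(-)i_R(-))$ is precisely what ties the reconstructed product back to the semi-direct multiplication \eqref{semi-direct-multp}.
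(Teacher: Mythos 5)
Your proof is correct and takes essentially the same route as the paper's: the forward direction via the canonical inclusions and projections transported along the isomorphism (the paper writes them as $i_R(r)=\Phi^{-1}(r,\d)$, $p_R=\pi_1\circ\Phi$, etc.), and the reverse direction by setting $\vp(s,r):=p_R\big(i_S(s)i_R(r)\big)$ and collapsing the Moufang-type identity \eqref{p_R-Moufang-I} using $p_R(i_S(s'))=\d$, exactly as in the paper. If anything, you make explicit two small facts the paper uses implicitly --- that loop homomorphisms preserve the identity and that $p_S(i_R(r))=\d$ --- with only the harmless slip that the relations $p_R(i_R(r)i_S(s))=r$ and $p_S(i_R(r)i_S(s))=s$ come from $\Phi\circ\Psi=\Id_{R\rtimes S}$ rather than $\Psi\circ\Phi=\Id_Q$ (immaterial here, since both composites are assumed to be identities).
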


\begin{proof}
Letting $\Phi:Q\to R \rtimes S$ to be the (quasigroup) isomorphism, we consider the mappings  
\[
i_R:R \to Q, \quad i_R(r) := \Phi^{-1}(r,\d), \qquad i_S:S \to Q, \quad i_S(s) := \Phi^{-1}(\d,s)
\]
and
\[
p_R:Q \to R, \quad p_R(q) := \pi_1(\Phi(q)), \qquad p_S:Q \to S, \quad p_S(q) := \pi_2(\Phi(q)),
\]
where $\pi_i$'s denote the projection onto the $i$th component. It is evident that 
\[
(p_R \circ i_R)(r) = \pi_1(r,\d) = r,
\]
for any $r \in R$, as such $p_R \circ i_R = \Id_R$. Similarly, $p_S\circ i_S = \Id_S$. We further see that
\[
i_S(ss') = \Phi^{-1}(\d,ss') = \Phi^{-1}\Big((\d,s)(\d,s')\Big) = \Phi^{-1}(\d,s)\Phi^{-1}(\d,s') = i_S(s)i_S(s'),
\]
that
\[
i_R(rr') = \Phi^{-1}(rr',\d) = \Phi^{-1}\Big((r,\d)(r',\d)\Big) = \Phi^{-1}(r,\d)\Phi^{-1}(r',\d) = i_R(r)i_R(r'),
\]
and that
\[
p_S(qq') = p_2(\Phi(qq')) = \pi_2(\Phi(q)\Phi(q')) = \pi_2(\Phi(q))\pi_2(\Phi(q'))  = p_S(q) p_S(q').
\]
On the other hand, the mapping $R\rtimes S \to Q$, $(r,s) \mapsto i_R(r)i_S(s)$, becomes $\Phi^{-1}:R\rtimes S \to Q$, whereas the map $Q \to R\rtimes S$, $q\mapsto (p_R(q),p_S(q))$ becomes $\Phi:Q \to R\rtimes S$. Finally, we note also that
\begin{align*}
& p_R\big(\big(i_R(r)i_S(s)\big)\big(i_R(r')i_S(s')\big)\big) = p_R\big(\Phi^{-1}(r,s)\Phi^{-1}(r',s')\big) = \\
&  p_R\big(\Phi^{-1}(r\vp(s,r'),ss')\big) = r\vp(s,r') = p_R(i_R(r))\Big(\big(p_R(i_S(s)i_R(r'))\big)p_R(i_S(s'))\Big) = \\
& \Big(p_R(i_R(r))\big(p_R(i_S(s)i_R(r'))\big)\Big)p_R(i_S(s')).
\end{align*}

Conversely, let $i_S:S \to Q$, $i_R:R \to Q$, and $p_S:Q \to S$ the quasigroup homomorphisms, together with the map $p_R:Q \to R$ satisfying \eqref{p_R-Moufang-I}, such that $\Psi:R\rtimes S \to Q$, $\Psi(r,s) := i_R(r)i_S(s)$, and $\Phi:Q \to R\rtimes S$, $\Phi(q):= (p_R(q),p_S(q))$ are inverse to each other. Thus, the loop structure on $Q$ induces a loop structure on $R\times S$. We shall, furthermore, see that this induced loop structure is in fact one of the form \eqref{semi-direct-multp}. Indeed,
\begin{align*}
& (\d,s)(r',\d) = \Phi(\Psi(\d,s)\Psi(r',\d)) = \Phi\Big(\big(i_R(\d)i_S(s)\big)\big(i_R(r')i_S(\d)\big)\Big) = \Phi\Big(i_S(s)i_R(r')\Big) = \\
& \Big(p_R\big(i_S(s)i_R(r')\big),p_S\big(i_S(s)i_R(r')\big)\Big) =  \Big(p_R\big(i_S(s)i_R(r')\big),p_S\big(i_S(s)\big)p_S\big(i_R(r')\big)\Big) = \\
& \Big(p_R\big(i_S(s)i_R(r')\big),s\Big) = \Big(\vp(s,r'), s\Big),
\end{align*}
where $\vp:S \times R \to R$, $\vp(s,r'):= p_R\big(i_S(s)i_R(r')\big)$. On the  third equality we used the assumption that $i_R,i_S$ are quasigroup homomorphisms, while on the fifth equality we used that of $p_S:Q \to S$ being a quasigroup homomorphism. Finally, on the sixth equality we used $p_S \circ i_S = \Id_S$. Accordingly,
\begin{align*}
& (r,s)(r',s') = \Phi(\Psi(r,s)\Psi(r',s')) = \Phi\Big(\big(i_R(r)i_S(s)\big)\big(i_R(r')i_S(s')\big)\Big) =  \\
& \Big(p_R\big(\big(i_R(r)i_S(s)\big)\big(i_R(r')i_S(s')\big)\big),p_S\big(\big(i_R(r)i_S(s)\big)\big(i_R(r')i_S(s')\big)\big)\Big) =\\
&\Big(p_R(i_R(r))\Big(\big(p_R(i_S(s)i_R(r'))\big)p_R(i_S(s'))\Big),ss'\Big) = \Big(r\vp(s,r'), ss'\Big).
\end{align*}
\end{proof}

If we ask the semi-direct product loop to have the $m$-inverse property, then we have the following more precise result.

\begin{proposition}\label{prop-semi-direct-quasi-IV}
Let $(R,\d)$ be an $m_1$-inverse loop with the permutation $J_R:R \to R$ so that $J_R(\d) = \d$, and that $J_R^{h_1} \in {\rm Aut}(R)$, and $(S,\d)$ is an $m_2$-inverse loop with $J_S:S \to S$ such that $J_S(\d) = \d$, and that $J_S^{h_2} \in {\rm Aut}(S)$. Furthermore, let there be a map $\vp:S \times R \to R$
satisfying
\begin{align}\label{unit-action-QR}
\begin{split}
& \vp(\d,r) = r, \qquad \vp(s,\d) =\d, \\
& \vp(J_S^m(ss'),\vp(J_S^{m+1}(s),r)) = \vp(J_S^m(s'),r), \\
& \vp(s,J_R^m(rr'))\vp(s,J_R^{m+1}(r)) =\vp(s,J_R^m(r')), 
\end{split}
\end{align}
for any $r,r' \in R$, any $s,s' \in S$, and any $m \in \B{Z}$ that satisfies 
\begin{align}\label{cong-eqn-II}
\begin{split}
& m \equiv m_1\,\, ({\rm mod}\,h_1), \\
& m \equiv m_2\,\, ({\rm mod}\,h_2).
\end{split}
\end{align}
Then, $\Big(R \rtimes S := R \times S, (\d,\d)\Big)$ is an $m$-invertible loop with the multiplication
\begin{equation}\label{semi-direct-multp}
(r,s)(r',s') := \Big(r\vp(s,r'),ss'\Big)
\end{equation}
and the permutation 
\begin{equation}\label{perm-semidirect-prod}
J: R \rtimes S \to R \rtimes S, \qquad J(r,s) := (\d,J_S(s))(J_R(r),\d) = \Big(\vp\big(J_S(s),J_R(r)\big), J_S(s) \Big),
\end{equation}
if and only if
\begin{align}\label{m-inverse-cond}
& \begin{cases}
\vp(s,r) = r & \text{\rm if}\,\, m = 2\ell, \\
\vp(J_S^m(ss'),\vp(s,r)) = \vp(J_S^m(s'),r) & \text{\rm if}\,\, m = 2\ell+1,
\end{cases}
\end{align}
for any $s,s' \in S$, and any $r\in R$.
\end{proposition}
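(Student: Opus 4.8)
The plan is to verify the $m$-inverse identity \eqref{m-inv} for the pair $(R\rtimes S,J)$ directly, and to show that under the standing hypotheses \eqref{unit-action-QR} its validity is equivalent to \eqref{m-inverse-cond}. As a preliminary reduction, since $m$ solves the congruences \eqref{cong-eqn-II} and $J_R^{h_1}\in{\rm Aut}(R)$, $J_S^{h_2}\in{\rm Aut}(S)$, Remark \ref{rk-m+uh} upgrades both factors to genuine $m$-inverse loops, so I may work with a single exponent $m$ throughout. That $R\rtimes S$ is a loop with unit $(\d,\d)$ follows from Proposition \ref{prop-semi-direct-quasi-III} together with $\vp(\d,r)=r$ and $\vp(s,\d)=\d$ (and the bijectivity of each $\vp(s,-)$, needed for the divisions). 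I would then organise the powers of $J$: writing $J^k(r,s)=(f_k(r,s),J_S^k(s))$, the definition \eqref{perm-semidirect-prod} gives $f_0(r,s)=r$ and the recursion $f_{k+1}(r,s)=\vp\big(J_S^{k+1}(s),J_R(f_k(r,s))\big)$, and a short induction yields the boundary evaluations $f_k(r,\d)=J_R^k(r)$ and $f_k(\d,s)=\d$, alongside the consequences $rJ_R(r)=\d$, $sJ_S(s)=\d$ obtained by specialising the $m$-inverse identities of $R$, $S$ at the idempotent.

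Substituting $J^m((r,s)(r',s'))$, $J^{m+1}(r,s)$ and $J^m(r',s')$ into \eqref{m-inv} and multiplying out via \eqref{semi-direct-multp}, the second coordinate of the required identity reads $J_S^m(ss')J_S^{m+1}(s)=J_S^m(s')$, which holds automatically because $S$ is $m$-inverse. Hence the whole content collapses to the first-coordinate identity
\[
f_m\big(r\vp(s,r'),ss'\big)\,\vp\big(J_S^m(ss'),f_{m+1}(r,s)\big)=f_m(r',s'),
\]
to be established for all $r,r'\in R$ and $s,s'\in S$; call this $(\star)$.

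For the forward implication I would extract \eqref{m-inverse-cond} from $(\star)$ by specialisation. Setting $r=\d$ and using $f_{m+1}(\d,s)=\d$ and $\vp(\cdot,\d)=\d$ gives $f_m(\vp(s,r'),ss')=f_m(r',s')$; taking $s'=\d$ and using $f_m(r',\d)=J_R^m(r')$ gives the key relation $f_m(\vp(s,r'),s)=J_R^m(r')$. Since each $\vp(s,-)$ is bijective, this determines $f_m$ in closed form as $f_m(a,s)=J_R^m\big(\vp(s,-)^{-1}(a)\big)$; feeding this back pins down how $\vp$ depends on the $S$-variable, and splitting by the parity of $m$ and comparing with \eqref{unit-action-QR} yields exactly \eqref{m-inverse-cond}, the even case forcing the action to be trivial and the odd case producing the twisted identity.

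For the converse I would assume \eqref{m-inverse-cond} and verify $(\star)$. When $m=2\ell$ the action is trivial, \eqref{semi-direct-multp} and \eqref{perm-semidirect-prod} degenerate to the direct product with $J=J_R\times J_S$, and $m$-invertibility is precisely Theorem \ref{thm-quasigr-direct-prod}; when $m=2\ell+1$, combining \eqref{m-inverse-cond} with the second line of \eqref{unit-action-QR} and the injectivity of $\vp(s,-)$ yields the invariance $\vp(J_S^{m+1}(s),r)=\vp(s,r)$, after which $(\star)$ is checked by computing $f_m$ and $f_{m+1}$ through the recursion, using the $m$-inverse identities of $R$, $S$ and the third line of \eqref{unit-action-QR} to telescope the nested terms. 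The main obstacle is exactly this last step: the recursion nests the action $\vp$ inside $J_R$, as in $f_{k+1}=\vp(J_S^{k+1}(s),J_R(\vp(\cdots)))$, and no hypothesis permits commuting $J_R$ past $\vp$, so there is no naive closed form for $J^m$ on general arguments. What rescues the computation is the parity dichotomy together with the forced invariance $\vp(J_S^{m+1}(s),-)=\vp(s,-)$ in the odd case, which reduces the $S$-exponents modulo $m+1$ and lets the $m$-inverse identities of $R$ and $S$ collapse the nested expressions; carrying out this bookkeeping, and confirming that the specialisations of the forward direction genuinely recover the general identity, is where the real work lies.
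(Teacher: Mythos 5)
Your skeleton is sound and, in its reductions, close to the paper's own argument: the second coordinate of the $m$-inverse identity is $J_S^m(ss')J_S^{m+1}(s)=J_S^m(s')$ and holds via Remark \ref{rk-m+uh} and \eqref{cong-eqn-II}; everything collapses to your first-coordinate identity $(\star)$; your extraction of \eqref{m-inverse-cond} by the specialisations $r=\d$, $s'=\d$ parallels the paper's converse direction; and reducing the even case of sufficiency to Theorem \ref{thm-quasigr-direct-prod} is legitimate (the paper instead computes directly). But there is a genuine gap in the odd case of sufficiency, and it stems from a false claim: you assert that ``no hypothesis permits commuting $J_R$ past $\vp$''. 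It does. Specialising the third line of \eqref{unit-action-QR} at $r'=\d$ gives $\vp(s,J_R^m(r))\,\vp(s,J_R^{m+1}(r))=\vp(s,\d)=\d$, i.e.\ $\vp(s,x)\,\vp(s,J_R(x))=\d$ since $J_R^m$ is a bijection; on the other hand the $m$-inverse identity of $R$ at the unit (using $J_R(\d)=\d$ and surjectivity of $J_R^m$) gives $x\,J_R(x)=\d$, so $J_R(\vp(s,x))$ and $\vp(s,J_R(x))$ are both right inverses of $\vp(s,x)$ in the loop $R$, whence
\[
J_R(\vp(s,x))=\vp(s,J_R(x)).
\]
This commutation is the key lemma of the paper's proof, and it is available from the standing hypotheses \eqref{unit-action-QR} alone, hence usable in \emph{both} directions of the equivalence. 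Combined with the fact that $\vp(u,-)$ and $\vp(J_S(u),-)$ are mutually inverse bijections of $R$ (set $s'=\d$ in the second line of \eqref{unit-action-QR} to get $\vp(u,\vp(J_S(u),r))=r$ for all $u$, then upgrade the one-sided inverse to a two-sided one by bijectivity --- this also settles the bijectivity you flagged for the loop divisions), it yields $J\big((\d,s)(r,\d)\big)=(J_R(r),J_S(s))$ and then, by induction, the closed form
\[
J^m(r,s)=\begin{cases}(J_R^m(r),J_S^m(s)), & m=2\ell,\\ (\d,J_S^m(s))(J_R^m(r),\d), & m=2\ell+1,\end{cases}
\]
which is exactly your $f_m$ in closed form, with no uncontrolled nesting; the odd-case verification of $(\star)$ then becomes a finite computation from \eqref{unit-action-QR}, Remark \ref{rk-m+uh} and \eqref{m-inverse-cond}.

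By contrast, your proposed rescue cannot close the recursion on its own. The invariance $\vp(J_S^{m+1}(s),-)=\vp(s,-)$ is indeed derivable in the odd case (compare \eqref{m-inverse-cond} with the second line of \eqref{unit-action-QR} and cancel by injectivity of $\vp(J_S^m(ss'),-)$), but it only normalises the $S$-slot of $\vp$; it does nothing to move $J_R$ through $\vp$. Already
\[
f_2(r,s)=\vp\big(J_S^2(s),J_R(\vp(J_S(s),J_R(r)))\big)
\]
does not collapse without the commutation lemma, so the telescoping you defer as ``bookkeeping'' --- which you yourself identify as where the real work lies --- stalls at the second iterate. The same lemma is also what makes your necessity argument complete: your implicit description $f_m(a,s)=J_R^m\big(\vp(s,-)^{-1}(a)\big)$ only turns into the odd case of \eqref{m-inverse-cond} once one knows $f_m(a,s)=\vp(J_S^m(s),J_R^m(a))$ and $J_R^m\circ\vp(s,-)=\vp(s,-)\circ J_R^m$ for even $m$. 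With the commutation lemma inserted, your plan goes through and essentially reproduces the paper's proof; without it, the odd case is not a proof.
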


\begin{proof}
Assuming the conditions are met, we see at once that
\begin{align*}%\label{xJ(x)=d}
%\begin{split}
& (r,s)J(r,s) = \big[(r,\d)(\d,s)\big]\big[(\d,J_S(s))(J_R(r),\d)\big] = \\
& \big[(r,\d)(\d,s)\big](\vp(J_S(s),J_R(r)),J_S(s)) = \\
& (r,\d)\big[(\d,s)(\vp(J_S(s),J_R(r)),J_S(s))\big] = \\
& (r,\d)(\vp(s,\vp(J_S(s),J_R(r))), sJ_S(s)) = \\
& (r,\d)(J_R(r), \d) = (rJ_R(r),\d) = (\d,\d).
%\end{split}
\end{align*}
On the other hand, since
\[
\vp(s,r)J_R(\vp(s,r)) = \d = \vp(s,r)\vp(s,J_R(r)),
\]
we conclude 
\[
J_R(\vp(s,r)) = \vp(s,J_R(r)),
\]
which, in turn, implies that
\begin{align*}
& J\big((\d,s)(r,\d)\big) = J(\vp(s,r),s) = (\d,J_S(s))(J_R(\vp(s,r)),\d) = \\
& (\d,J_S(s))(\vp(s,J_R(r)),\d) = (\vp(J_S(s),\vp(s,J_R(r))), J_S(s))= (J_R(r),J_S(s)),
\end{align*}
and then that
\[
J^m(r,s) = \begin{cases}
(J_R^m(r),J_S^m(s)), & \text{\rm if}\,\, m = 2\ell,\\
(\d,J_S^m(s))(J_R^m(r),\d), & \text{\rm if}\,\, m = 2\ell+1.
\end{cases}
\]
Accordingly, in the case $m = 2\ell$,
\begin{align}\label{m=2l}
\begin{split}
& J^m\big((r,s)(r',s')\big)J^{m+1}(r,s) = J^m(r\vp(s,r'),ss')J^{m+1}(r,s) = \\
& \big[(J_R^m(r\vp(s,r')),\d)(\d, J_S^m(ss'))\big]\big[(\d,J_S^{m+1}(s))(J_R^{m+1}(r),\d)\big] = \\
& (J_R^m(r\vp(s,r')),\d)\Big\{(\d, J_S^m(ss'))\big[(\d,J_S^{m+1}(s))(J_R^{m+1}(r),\d)\big]\Big\} = \\
& (J_R^m(r\vp(s,r')),\d)\big[((\d, J_S^m(ss')))(\vp(J_S^{m+1}(s),J_R^{m+1}(r)),J_S^{m+1}(s))\big] = \\
& (J_R^m(r\vp(s,r')),\d)(\vp(J_S^m(ss'), \vp(J_S^{m+1}(s),J_R^{m+1}(r))),J_S^m(ss')J_S^{m+1}(s)) = \\
& (J_R^m(r\vp(s,r')),\d)(\vp(J_S^m(ss'), \vp(J_S^{m+1}(s),J_R^{m+1}(r))),J_S^m(s')) = \\
& (J_R^m(r\vp(s,r')),\d)(\vp(J_S^m(s'), J_R^{m+1}(r)),J_S^m(s')) = \\
& \Big(J_R^m(r\vp(s,r'))\vp(J_S^m(s'), J_R^{m+1}(r)), J_S^m(s')\Big) = \\
& \Big(J_R^m(r\vp(s,r'))J_R^{m+1}(\vp(J_S^m(s'), r)), J_S^m(s')\Big)  = \big(J_R^m(r'),J_S^m(s')\big) = J^m(r',s')
\end{split}
\end{align}
where; on the sixth equality we used Remark \ref{rk-m+uh}, and that $m\in \B{Z}$ is a solution of the system \eqref{cong-eqn-II}, on the tenth equality we used \eqref{m-inverse-cond}, in addition to Remark \ref{rk-m+uh} and \eqref{cong-eqn-II}. In the case $m = 2\ell+1$,
\begin{align}\label{m=2l+1}
\begin{split}
& J^m\big((r,s)(r',s')\big)J^{m+1}(r,s) = J^m(r\vp(s,r'),ss')J^{m+1}(r,s) = \\
& \big[(\d,J_S^m(ss'))(J_R^m(r\vp(s,r')),\d)\big]\big[(J_R^{m+1}(r),\d)(\d,J_S^{m+1}(s))\big] = \\
& (\vp(J_S^m(ss'), J_R^m(r\vp(s,r'))), J_S^m(ss'))\big[(J_R^{m+1}(r),\d)(\d,J_S^{m+1}(s))\big] = \\
& \big[(\vp(J_S^m(ss'), J_R^m(r\vp(s,r'))), J_S^m(ss'))(J_R^{m+1}(r),\d)\big](\d,J_S^{m+1}(s)) = \\
& (\vp(J_S^m(ss'), J_R^m(r\vp(s,r')))\vp(J_S^m(ss'), J_R^{m+1}(r)), J_S^m(ss'))(\d,J_S^{m+1}(s)) = \\
& (\vp(J_S^m(ss'), J_R^m(\vp(s,r'))), J_S^m(ss'))(\d,J_S^{m+1}(s)) = \\
& (\vp(J_S^m(ss'), J_R^m(\vp(s,r'))), J_S^m(s')) = (J_R^m(\vp(J_S^m(ss'), \vp(s,r'))), J_S^m(s')) = \\
& (J_R^m(\vp(J_S^m(s'), r')), J_S^m(s')) = (\vp(J_S^m(s'), J_R^m(r')), J_S^m(s')) = (\d,J_S^m(s'))(J_R^m(r'),\d) = J^m(r',s')
\end{split}
\end{align}
where; in the sixth equation we used \eqref{unit-action-QR}, in the seventh equation we used Remark \ref{rk-m+uh} and \eqref{cong-eqn-II}, and in the ninth equation we used \eqref{m-inverse-cond}. 

Let, conversely, $R \rtimes S$ be an $m$-inverse loop with the multiplication \eqref{semi-direct-multp} and the permutation \eqref{perm-semidirect-prod}. %Then, first of all, the fifth equality of \eqref{xJ(x)=d} holds, and hence the condition \eqref{left-action}.

In the case $m = 2\ell$, the tenth equation of \eqref{m=2l} holds, and we have
\[
J_R^m(r\vp(s,r'))J_R^{m+1}(\vp(J_S^m(s'), r)) = J_R^m(r')
\]
for any $r,r' \in R$, and any $s,s' \in S$. In particular, for $r = \d$, we see that
\[
J_R^m(\vp(s,r')) = J_R^m(r'),
\]
and that $\vp(s,r') = r'$, for any $r' \in R$, and any $s\in S$.

In the case $m = 2\ell+1$, however, we have the ninth equation of \eqref{m=2l+1}, that is,
\[
J_R^m(\vp(J_S^m(ss'), \vp(s,r'))) = J_R^m(\vp(J_S^m(s'), r')).
\]
But then, since $J_R:R\to R$ is a permutation, we obtain
\[
\vp(J_S^m(ss'), \vp(s,r')) = \vp(J_S^m(s'), r')
\]
for any $r' \in R$, and any $s,s' \in S$.
\end{proof}

\subsection{Matched pairs of $m$-invertible loops}\label{subsect-matched-pair-quasigr}~

In order to be able to generalize Definition \ref{def-semi-direct} in the presence of two quasigroups, none of which is necessarily the kernel of a quasigroup homomorphism, we adopt the point of view of \cite{BespDrab99,Maji90,Radf85}.

\begin{proposition}\label{prop-matched-pair-quasi}
Let $R$ and $S$ be two loops, with the maps $\vp:S \times R \to R $ and $\psi:S \times R \to S$ satisfying 
\[
\vp(s,\d) = \d, \qquad \vp(\d,r) = r, \qquad \psi(s,\d) = s, \qquad \psi(\d,r) = \d.
\]
Then a loop $Q$ is isomorphic to the loop $R \bowtie S:=R \times S$ with the multiplication given by
\begin{equation}\label{matched-pair-multp}
(r,s)(r',s') := (r\vp(s,r'), \psi(s,r')s'),
\end{equation}
if and only if there are quasigroup homomorphisms $i_S:S \to Q$, $i_R:R \to Q$, together with the maps $p_R:Q \to R$ and $p_S:Q \to S$ satisfying the Moufang-type identities
\begin{align}\label{p_R-Moufang}
\begin{split}
& p_R\big(\big(i_R(r)i_S(s)\big)\big(i_R(r')i_S(s')\big)\big) = p_R(i_R(r))\Big(\big(p_R(i_S(s)i_R(r'))\big)p_R(i_S(s'))\Big) = \\
&\Big(p_R(i_R(r))\big(p_R(i_S(s)i_R(r'))\big)\Big)p_R(i_S(s'))
\end{split}
\end{align}
and
\begin{align}\label{p_S-Moufang}
\begin{split}
& p_S\big(\big(i_R(r)i_S(s)\big)\big(i_R(r')i_S(s')\big)\big) = p_S(i_R(r))\Big(\big(p_S(i_S(s)i_R(r'))\big)p_S(i_S(s'))\Big) = \\
&\Big(p_S(i_R(r))\big(p_S(i_S(s)i_R(r'))\big)\Big)p_S(i_S(s')),
\end{split}
\end{align}
as well as $p_R\circ i_R = \Id_R$ and $p_S\circ i_S = \Id_S$, such that $R\bowtie S \to Q$, $(r,s) \mapsto i_R(r)i_S(s)$ and $Q \to R\bowtie S$, $q\mapsto (p_R(q),p_S(q))$ are inverse to each other.
\end{proposition}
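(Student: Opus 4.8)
The plan is to follow the proof of Proposition \ref{prop-semi-direct-quasi-III} almost verbatim, since the matched pair \eqref{matched-pair-multp} degenerates to a semi-direct product precisely when $\psi(s,r) = s$. The one genuinely new feature is that the $S$-component is now twisted by $\psi$, so $p_S$ is no longer a quasigroup homomorphism (as it was in the semi-direct case), and its homomorphism defect must be carried entirely by the second Moufang-type identity \eqref{p_S-Moufang}, in perfect symmetry with how \eqref{p_R-Moufang} already handled $p_R$.

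For the forward direction I would set $i_R(r) := \Phi^{-1}(r,\d)$, $i_S(s) := \Phi^{-1}(\d,s)$, $p_R := \pi_1\circ\Phi$, and $p_S := \pi_2\circ\Phi$, where $\Phi\colon Q \to R\bowtie S$ is the given loop isomorphism and $\pi_i$ is the $i$-th projection. Using the four boundary conditions on $\vp$ and $\psi$, I first record $(r,\d)(\d,s) = (r,s)$, so that $i_R(r)i_S(s) = \Phi^{-1}(r,s)$; that the same boundary conditions force $(r,\d)(r',\d) = (rr',\d)$ and $(\d,s)(\d,s') = (\d,ss')$, whence $i_R$ and $i_S$ are quasigroup homomorphisms; and that $p_R\circ i_R = \Id_R$, $p_S\circ i_S = \Id_S$. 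The two Moufang identities then drop out of a single computation: all three expressions in \eqref{p_R-Moufang} evaluate to $r\,\vp(s,r')$ and all three in \eqref{p_S-Moufang} to $\psi(s,r')\,s'$, the equality of the two parenthesizations on the right being automatic because one factor is always $\d$ (namely $p_R(i_S(s')) = \d$ in \eqref{p_R-Moufang} and $p_S(i_R(r)) = \d$ in \eqref{p_S-Moufang}).

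For the converse I would transport the loop structure of $Q$ to $R\times S$ along the mutually inverse bijections $\Psi(r,s) := i_R(r)i_S(s)$ and $\Phi(q) := (p_R(q),p_S(q))$, set $\vp(s,r') := p_R(i_S(s)i_R(r'))$ and $\psi(s,r') := p_S(i_S(s)i_R(r'))$, and show the transported product is exactly \eqref{matched-pair-multp}. A small fact I would use is that a quasigroup homomorphism between loops preserves the identity, because the image of the identity is idempotent and the identity is the unique idempotent of a loop; hence $i_R(\d) = i_S(\d)$ is the unit of $Q$, and combining this with $\Phi\circ\Psi = \Id$ gives $p_R(i_S(s')) = \d$, $p_S(i_R(r)) = \d$, $p_R(i_R(r)) = r$, and $p_S(i_S(s')) = s'$. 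Substituting these into the right-hand sides of \eqref{p_R-Moufang} and \eqref{p_S-Moufang} yields
\[
(r,s)(r',s') = \Phi\big(\Psi(r,s)\Psi(r',s')\big) = \big(r\,\vp(s,r'),\,\psi(s,r')\,s'\big),
\]
and the same four evaluations confirm the boundary conditions $\vp(\d,r) = r$, $\vp(s,\d) = \d$, $\psi(\d,r) = \d$, $\psi(s,\d) = s$.

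The hard part, such as it is, will be conceptual rather than computational: I must make sure that the homomorphism property of $p_S$ is nowhere used and that its role is taken over cleanly by \eqref{p_S-Moufang}, and that the associativity lurking inside the two parenthesizations of each Moufang identity is invoked only in the cases trivialized by a $\d$-factor. Pinning down that these $\d$-factors land in the correct slots of both \eqref{p_R-Moufang} and \eqref{p_S-Moufang} --- which is exactly why the Moufang form, and not plain associativity, is the right hypothesis for a non-associative $Q$ --- is the one point that warrants care.
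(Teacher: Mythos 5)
Your proposal is correct and follows essentially the same route as the paper's own proof: identical choices of $i_R$, $i_S$, $p_R$, $p_S$ in the forward direction, and the same transport of structure via $\Psi(r,s)=i_R(r)i_S(s)$, $\Phi(q)=(p_R(q),p_S(q))$ with $\vp(s,r'):=p_R\big(i_S(s)i_R(r')\big)$ and $\psi(s,r'):=p_S\big(i_S(s)i_R(r')\big)$ in the converse. Your one addition --- proving explicitly that a quasigroup homomorphism into a loop sends $\d$ to the unit (idempotency plus cancellation), whence $p_R(i_S(s'))=\d$ and $p_S(i_R(r))=\d$ so that the $\d$-factors trivialize the parenthesizations in \eqref{p_R-Moufang} and \eqref{p_S-Moufang} --- is a point the paper uses silently (e.g.\ in the step $i_R(\d)i_S(s)\,i_R(r')i_S(\d)=i_S(s)i_R(r')$), so making it explicit is a genuine, if minor, improvement in rigor rather than a different approach.
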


\begin{proof}
Letting $\Phi:Q\to R \bowtie S$ to be the (quasigroup) isomorphism, we consider the mappings  
\[
i_R:R \to Q, \quad i_R(r) := \Phi^{-1}(r,\d), \qquad i_S:S \to Q, \quad i_S(s) := \Phi^{-1}(\d,s)
\]
and
\[
p_R:Q \to R, \quad p_R(q) := \pi_1(\Phi(q)), \qquad p_S:Q \to S, \quad p_S(q) := \pi_2(\Phi(q)),
\]
where $\pi_i$'s denote the projection onto the $i$th component. It is evident that 
\[
(p_R \circ i_R)(r) = \pi_1(r,\d) = r,
\]
for any $r \in R$, as such $p_R \circ i_R = \Id_R$. Similarly, $p_S\circ i_S = \Id_S$. We further see that
\[
i_S(ss') = \Phi^{-1}(\d,ss') = \Phi^{-1}\Big((\d,s)(\d,s')\Big) = \Phi^{-1}(\d,s)\Phi^{-1}(\d,s') = i_S(s)i_S(s'),
\]
and that
\[
i_R(rr') = \Phi^{-1}(rr',\d) = \Phi^{-1}\Big((r,\d)(r',\d)\Big) = \Phi^{-1}(r,\d)\Phi^{-1}(r',\d) = i_R(r)i_R(r').
\]
On the other hand, the mapping $R\bowtie S \to Q$, $(r,s) \mapsto i_R(r)i_S(s)$, becomes $\Phi^{-1}:R\bowtie S \to Q$, whereas the map $Q \to R\bowtie S$, $q\mapsto (p_R(q),p_S(q))$ becomes $\Phi:Q \to R\bowtie S$. Finally, we note also that
\begin{align*}
& p_R\big(\big(i_R(r)i_S(s)\big)\big(i_R(r')i_S(s')\big)\big) = p_R\big(\Phi^{-1}(r,s)\Phi^{-1}(r',s')\big) = \\
&  p_R\big(\Phi^{-1}(r\vp(s,r'),\psi(s,r')s')\big) = r\vp(s,r') = p_R(i_R(r))\Big(\big(p_R(i_S(s)i_R(r'))\big)p_R(i_S(s'))\Big) = \\
& \Big(p_R(i_R(r))\big(p_R(i_S(s)i_R(r'))\big)\Big)p_R(i_S(s')),
\end{align*}
and that, similarly, 
\begin{align*}
& p_S\big(\big(i_R(r)i_S(s)\big)\big(i_R(r')i_S(s')\big)\big) = p_S(i_R(r))\Big(\big(p_S(i_S(s)i_R(r'))\big)p_S(i_S(s'))\Big) = \\
&\Big(p_S(i_R(r))\big(p_S(i_S(s)i_R(r'))\big)\Big)p_S(i_S(s')).
\end{align*}
Conversely, let $i_S:S \to Q$ and $i_R:R \to Q$ be quasigroup homomorphisms, together with the maps $p_R:Q \to R$ and $p_S:Q \to S$ satisfying \eqref{p_R-Moufang} and \eqref{p_S-Moufang}, such that $\Psi:R\bowtie S \to Q$, $\Psi(r,s) := i_R(r)i_S(s)$, and $\Phi:Q \to R\bowtie S$, $\Phi(q):= (p_R(q),p_S(q))$ are inverse to each other. Thus, the loop structure on $Q$ induces a loop structure on $R\times S$. We shall, furthermore, see that this induced loop structure is in fact of the form \eqref{matched-pair-multp}. Indeed,
\begin{align*}
& (\d,s)(r',\d) = \Phi(\Psi(\d,s)\Psi(r',\d)) = \Phi\Big(\big(i_R(\d)i_S(s)\big)\big(i_R(r')i_S(\d)\big)\Big) = \Phi\Big(i_S(s)i_R(r')\Big) = \\
& \Big(p_R\big(i_S(s)i_R(r')\big),p_S\big(i_S(s)i_R(r')\big)\Big) =   \Big(\vp(s,r'), \psi(s,r')\Big),
\end{align*}
where $\vp:S \times R \to R$, $\vp(s,r'):= p_R\big(i_S(s)i_R(r')\big)$, and $\psi:S \times R \to S$, $\psi(s,r'):= p_S\big(i_S(s)i_R(r')\big)$. On the  third equality we used the assumption that $i_R,i_S$ are quasigroup homomorphisms. Accordingly,
\begin{align*}
& (r,s)(r',s') = \Phi(\Psi(r,s)\Psi(r',s')) = \Phi\Big(\big(i_R(r)i_S(s)\big)\big(i_R(r')i_S(s')\big)\Big) = \\
& \Big(p_R\big(\big(i_R(r)i_S(s)\big)\big(i_R(r')i_S(s')\big)\big),p_S\big(\big(i_R(r)i_S(s)\big)\big(i_R(r')i_S(s')\big)\big)\Big) =\\
&\Big(p_R(i_R(r))\Big(\big(p_R(i_S(s)i_R(r'))\big)p_R(i_S(s'))\Big),\Big(p_S(i_R(r))\big(p_S(i_S(s)i_R(r'))\big)\Big)p_S(i_S(s'))\Big) = \\
& \Big(r\vp(s,r'), \psi(s,r')s'\Big).
\end{align*}
\end{proof}

Next, we discuss the matched pair construction for the $m$-inverse property loops.

\begin{proposition}\label{prop-matched-pair-loops}
Let $(R,\d)$ be an $m_1$-inverse loop with the permutation $J_R:R \to R$ so that $J_R(\d) = \d$, and that $J_R^{h_1} \in {\rm Aut}(R)$, and $(S,\d)$ is an $m_2$-inverse loop with $J_S:S \to S$ such that $J_S(\d) = \d$, and that $J_S^{h_2} \in {\rm Aut}(S)$. Furthermore, let there be two maps $\phi:S \times R \to R$ and $\psi:S \times R \to S$
satisfying
\begin{align}
%\begin{split}
& \phi(\d,r) = r, \quad \phi(s,\d) =\d, \qquad \psi(\d,r) = \d, \quad \psi(s,\d) =s, \label{unit-action-QR-matched-I}\\
& \phi(s,\phi(J_S(s),r)) = r, \label{unit-action-QR-matched-I-I}\\
& \psi(\psi(s, J_R^m(rr')),J_R^{m+1}(r)) = \psi(s,J_R^m(r')), \label{unit-action-QR-matched-II}\\
& \phi(s,J_R^m(rr'))\phi(\psi(s,J_R^m(rr')),J_R^{m+1}(r)) =\phi(s,J_R^m(r')), \label{unit-action-QR-matched-III}\\
& \psi(s,\phi(J_S(s),r))\psi(J_S(s),r) =\d, \label{unit-action-QR-matched-IV}
\end{align}
for any $r,r' \in R$, any $s,s' \in S$, and any $m \in \B{Z}$ that satisfies 
\begin{align}\label{cong-eqn-II-matched}
\begin{split}
& m \equiv m_1\,\, ({\rm mod}\,h_1), \\
& m \equiv m_2\,\, ({\rm mod}\,h_2).
\end{split}
\end{align}
Then, $\Big(R \bowtie S := R \times S, (\d,\d)\Big)$ is an $m$-invertible loop with the multiplication
\begin{equation}\label{matched-pair-multp}
(r,s)(r',s') := \Big(r\phi(s,r'),\psi(s,r')s'\Big)
\end{equation}
and the permutation 
\begin{equation}\label{perm-matched-pair-prod}
J: R \bowtie S \to R \bowtie S, \qquad J(r,s) := (\d,J_S(s))(J_R(r),\d) = \Big(\phi\big(J_S(s),J_R(r)\big),\psi\big(J_S(s),J_R(r)\big) \Big),
\end{equation}
if and only if
\begin{align}\label{m-inverse-cond-matched}
& \begin{cases}
\left. \begin{array}{c}
\phi(s,r) = r, \\ 
\psi(s,r) = s, 
\end{array}\right\}
& \text{\rm if}\,\, m = 2\ell, \\
\left. \begin{array}{c}
\phi(J_S^m(\psi(s,J_R^{-m}(r))s'),\phi(\psi(s,J_R^{-1}(r)), r)) = \phi(J_S^m(s'),r), \\
\Big[\psi(J_S^m(\psi(s,r)s'),J_R^m(\phi(s,r)))\Big]J_S^{m+1}(s) = \psi(J_S^m(s'),J_R^m(r)),
\end{array}\right\}
& \text{\rm if}\,\, m = 2\ell+1,
\end{cases}
\end{align}
for any $s,s' \in S$, and any $r,r'\in R$.
\end{proposition}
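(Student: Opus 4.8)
The plan is to mirror the proof of Proposition \ref{prop-semi-direct-quasi-IV}, but now to carry \emph{both} coordinate maps $\phi$ and $\psi$ through every computation, since the $S$-coordinate of the product \eqref{matched-pair-multp} is no longer the bare product $ss'$. Throughout I use that in any $m$-inverse loop one has $xJ(x)=\d$ (put $y=\d$ in \eqref{m-inv} and use that $J$ is onto), so that $J_R$ and $J_S$ are the right-inverse maps of $R$ and $S$. First I would check that $\big(R\bowtie S,(\d,\d)\big)$ is a loop and that $J$ is a genuine right-inverse permutation. The unit property of $(\d,\d)$ is immediate from \eqref{unit-action-QR-matched-I}, while the quasigroup (hence loop) axioms follow once $\phi(s,-)$ is known to be a bijection of $R$, which it is by \eqref{unit-action-QR-matched-I-I}. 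For the permutation, writing $(r,s)=(r,\d)(\d,s)$ and expanding $(r,s)J(r,s)$ via \eqref{matched-pair-multp}, the first coordinate is $r\,\phi\big(s,\phi(J_S(s),J_R(r))\big)$, which collapses to $rJ_R(r)=\d$ by \eqref{unit-action-QR-matched-I-I}, and the second coordinate is $\psi\big(s,\phi(J_S(s),J_R(r))\big)\,\psi(J_S(s),J_R(r))$, which collapses to $\d$ by \eqref{unit-action-QR-matched-IV}; hence $(r,s)J(r,s)=(\d,\d)$.

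Next I would record the intertwining relations that drive the rest of the argument. Since \eqref{unit-action-QR-matched-I-I} exhibits $\phi(s,-)$ and $\phi(J_S(s),-)$ as mutually inverse bijections of $R$, one also has $\phi\big(J_S(s),\phi(s,X)\big)=X$. Taking $m=0$, $r'=\d$ in \eqref{unit-action-QR-matched-III} and \eqref{unit-action-QR-matched-II} gives the twisted intertwining $J_R(\phi(s,r))=\phi(\psi(s,r),J_R(r))$ together with $\psi(\psi(s,r),J_R(r))=s$; and substituting $r=\phi(s,Z)$ into \eqref{unit-action-QR-matched-IV} (using $\phi(J_S(s),\phi(s,Z))=Z$) yields $\psi\big(J_S(s),\phi(s,Z)\big)=J_S(\psi(s,Z))$. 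Also $J(r,\d)=(J_R(r),\d)$ and $J(\d,s)=(\d,J_S(s))$, as in the semi-direct case. A direct two-fold application of $J$, simplified by these four relations, then collapses to $J^2(r,s)=\big(J_R^2(r),J_S^2(s)\big)$, whence by induction
\[
J^m(r,s)=\begin{cases}\big(J_R^m(r),J_S^m(s)\big),& m=2\ell,\\[2pt](\d,J_S^m(s))(J_R^m(r),\d),& m=2\ell+1.\end{cases}
\]

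With the iterate formula in hand I would verify the defining identity $J^m\big((r,s)(r',s')\big)J^{m+1}(r,s)=J^m(r',s')$ by expanding both sides via \eqref{matched-pair-multp} and splitting on the parity of $m$, exactly as in the displays \eqref{m=2l} and \eqref{m=2l+1}. In each parity one reassociates the two nested products (as there), invokes Remark \ref{rk-m+uh} together with the congruences \eqref{cong-eqn-II-matched} to trade $J_R^m,J_S^m$ for their automorphism representatives, and then collapses the $R$-coordinate by \eqref{unit-action-QR-matched-III} and the $S$-coordinate by \eqref{unit-action-QR-matched-II}. The even case reduces precisely to the pair $\phi(s,r)=r$, $\psi(s,r)=s$, and the odd case to the two displayed identities of \eqref{m-inverse-cond-matched}, proving the ``if'' direction. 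For the converse one reads the same chains of equalities backwards: since $J_R,J_S$ are permutations the outermost $J_R^m,J_S^m$ can be stripped, and specialising $r=\d$ (in the even case) or using injectivity (in the odd case) isolates \eqref{m-inverse-cond-matched}, as in the closing paragraphs of the proof of Proposition \ref{prop-semi-direct-quasi-IV}.

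The main obstacle is the non-associativity coupled with the fact that $\phi$ and $\psi$ now feed into one another: unlike the semi-direct situation, the $S$-coordinate $\psi(s,r')$ re-enters the $R$-coordinate at the next multiplication, so the \emph{twisted} intertwining $J_R(\phi(s,r))=\phi(\psi(s,r),J_R(r))$ (rather than a clean $J_R\phi(s,-)=\phi(s,J_R(-))$) must be threaded consistently through every reassociation. I expect the odd-$m$ computation of the $S$-coordinate, where $J_S^{m+1}(s)$ multiplies the output of $\psi$ and the nested $\psi$-terms must be brought into the shape of \eqref{unit-action-QR-matched-II}, to be the most delicate piece of bookkeeping.
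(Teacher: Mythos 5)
Your proposal is correct in substance and follows essentially the same route as the paper's proof: the same permutation $J(r,s)=(\d,J_S(s))(J_R(r),\d)$, the same twisted intertwining $J_R(\phi(s,r))=\phi(\psi(s,r),J_R(r))$ (the paper's \eqref{incerse-of-left-action}), the same parity-split formula for $J^m(r,s)$, the same two long computations as in \eqref{m=2l-matched} and \eqref{m=2l+1-matched}, and the same specializations for the converse. Two steps, however, need repair.

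First, you are not entitled to ``take $m=0$'' in \eqref{unit-action-QR-matched-II} and \eqref{unit-action-QR-matched-III}: those identities are hypothesized only for $m\in\B{Z}$ solving the congruences \eqref{cong-eqn-II-matched}, and $0$ need not be a solution (for instance $m_1=m_2=1$, $h_1=h_2=2$ forces $m$ odd). The correct move --- implicit in the paper --- is to set $r'=\d$ for an admissible $m$ and then substitute $r\mapsto J_R^{-m}(r)$, using that $J_R$ is a permutation: since $J_R^{m}\big(J_R^{-m}(r)\big)=r$ and $J_R^{m+1}\big(J_R^{-m}(r)\big)=J_R(r)$, this gives exactly $\phi(s,r)\,\phi(\psi(s,r),J_R(r))=\d$ and $\psi(\psi(s,r),J_R(r))=s$, whence your two relations follow by uniqueness of right inverses in $R$ together with $xJ_R(x)=\d$. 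Your third relation $\psi(J_S(s),\phi(s,Z))=J_S(\psi(s,Z))$, derived from \eqref{unit-action-QR-matched-IV}, is fine, and with these four relations your collapse of $J^2(r,s)$ to $\big(J_R^2(r),J_S^2(s)\big)$ does go through (the paper reaches the same point more directly, from $\big((\d,s)(r,\d)\big)(J_R(r),J_S(s))=(\d,\d)$ and uniqueness of right inverses).

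Second, your claim that the quasigroup axioms follow ``once $\phi(s,-)$ is a bijection'' covers only left division. Solving $(x,y)(r',s')=(b_1,b_2)$ requires a unique $y$ with $\psi(y,r')=b_2\slash s'$, i.e.\ bijectivity of $\psi(-,a)$ for each $a\in R$. This does hold, but not as a consequence of \eqref{unit-action-QR-matched-I-I}: from \eqref{unit-action-QR-matched-II} with $r'=\d$ one gets $\psi\big(\psi(s,J_R^m(r)),J_R^{m+1}(r)\big)=s$, and surjectivity of $J_R^m$ and $J_R^{m+1}$ then equips each $\psi(-,a)$ with both a left and a right inverse. (The paper does not verify the loop axioms at all, so here you are more careful than the source, just not complete.) Finally, in the converse direction note that the even case needs \emph{both} specializations $r=\d$ (for $\phi$) and $s=\d$ (for $\psi$), and in the odd case the first identity of \eqref{m-inverse-cond-matched} is obtained by the substitution $r'=J_R^{-m}(r)$ rather than by injectivity alone; with these patches your argument coincides with the paper's.
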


\begin{proof}
Assuming the conditions \eqref{m-inverse-cond-matched} are met, we see at once that
\begin{align*}%\label{xJ(x)=d}
%\begin{split}
& (r,s)J(r,s) = \big[(r,\d)(\d,s)\big]\big[(\d,J_S(s))(J_R(r),\d)\big] = \\
& \big[(r,\d)(\d,s)\big](\phi(J_S(s),J_R(r)),\psi(J_S(s),J_R(r))) = \\
& (r,\d)\big[(\d,s)(\phi(J_S(s),J_R(r)),\psi(J_S(s),J_R(r)))\big] = \\
& (r,\d)\Big(\phi(s,\phi(J_S(s),J_R(r))), \psi(s,\phi(J_S(s),J_R(r)))\psi(J_S(s),J_R(r))\Big) = \\
& (r,\d)(J_R(r), \d) = (rJ_R(r),\d) = (\d,\d),
%\end{split}
\end{align*}
where on the fifth equality we used \eqref{unit-action-QR-matched-I-I}, and \eqref{unit-action-QR-matched-IV}. Next, in view of \eqref{unit-action-QR-matched-III} and \eqref{unit-action-QR-matched-II}, we have
\begin{align*}
& \big((\d,s)(r,\d)\big)(J_R(r),J_S(s)) = (\phi(s,r),\psi(s,r))(J_R(r),J_S(s)) = \\
& \big(\phi(s,r)\phi(\psi(s,r),J_R(r)), \psi(\psi(s,r),J_R(r))J_S(s)\big) = (\d,\d),
\end{align*}
which implies that
\[
J\big((\d,s)(r,\d)\big) = (J_R(r),J_S(s)).
\]
On the other hand, in view of \eqref{unit-action-QR-matched-III} we have
\[
\phi(s,r)J_R(\phi(s,r)) = \d = \phi(s,r)\phi(\psi(s,r),J_R(r)),
\]
and hence we conclude 
\begin{equation}\label{incerse-of-left-action}
J_R(\phi(s,r)) = \phi(\psi(s,r),J_R(r)).
\end{equation}
Let us note further that \eqref{incerse-of-left-action}, together with \eqref{unit-action-QR-matched-II}, implies 
\[
J_R^m(\phi(s,r)) = \begin{cases}
\phi(s,J^m_R(r)) & \text{if  } m = 2\ell, \\
\phi(\psi(s,J^{m-1}_R(r)),J^m_R(r)) & \text{if  } m=2\ell+1.
\end{cases}
\]
Accordingly, in the case $m = 2\ell$,
\begin{align}\label{m=2l-matched}
\begin{split}
& J^m\big((r,s)(r',s')\big)J^{m+1}(r,s) = J^m(r\phi(s,r'),\psi(s,r')s')J^{m+1}(r,s) = \\
& \big[(J_R^m(r\phi(s,r')),\d)(\d, J_S^m(\psi(s,r')s'))\big]\big[(\d,J_S^{m+1}(s))(J_R^{m+1}(r),\d)\big] = \\
& (J_R^m(r\phi(s,r')),\d)\Big\{(\d, J_S^m(\psi(s,r')s'))\big[(\d,J_S^{m+1}(s))(J_R^{m+1}(r),\d)\big]\Big\} = \\
& (J_R^m(r\phi(s,r')),\d)\big[((\d, J_S^m(\psi(s,r')s')))(\phi(J_S^{m+1}(s),J_R^{m+1}(r)),\psi(J_S^{m+1}(s),J_R^{m+1}(r)))\big] = \\
& (J_R^m(r\phi(s,r')),\d)\Big(\phi(J_S^m(\psi(s,r')s'), \phi(J_S^{m+1}(s),J_R^{m+1}(r))), \\
& \hspace{1cm}\psi(J_S^m(\psi(s,r')s'), \phi(J_S^{m+1}(s),J_R^{m+1}(r)))\psi(J_S^{m+1}(s),J_R^{m+1}(r))\Big) =\\
& \Big(J_R^m(r\phi(s,r'))\phi(J_S^m(\psi(s,r')s'), \phi(J_S^{m+1}(s),J_R^{m+1}(r))), \\
& \hspace{1cm}\psi(J_S^m(\psi(s,r')s'), \phi(J_S^{m+1}(s),J_R^{m+1}(r)))\psi(J_S^{m+1}(s),J_R^{m+1}(r))\Big) = \\
& \big(J_R^m(r'),J_S^m(s')\big) = J^m(r',s'),
\end{split}
\end{align}
where; on the seventh equality we used \eqref{m-inverse-cond-matched}. In the case $m = 2\ell+1$,
\begin{align}\label{m=2l+1-matched}
\begin{split}
& J^m\big((r,s)(r',s')\big)J^{m+1}(r,s) = J^m(r\phi(s,r'),\psi(s,r')s')J^{m+1}(r,s) = \\
& \big[(\d,J_S^m(\psi(s,r')s'))(J_R^m(r\phi(s,r')),\d)\big]\big[(J_R^{m+1}(r),\d)(\d,J_S^{m+1}(s))\big] = \\
& (\phi(J_S^m(\psi(s,r')s'), J_R^m(r\phi(s,r'))),\psi(J_S^m(\psi(s,r')s'), J_R^m(r\phi(s,r'))))\big[(J_R^{m+1}(r),\d)(\d,J_S^{m+1}(s))\big] = \\
& \big[(\phi(J_S^m(\psi(s,r')s'), J_R^m(r\phi(s,r'))),\psi(J_S^m(\psi(s,r')s'), J_R^m(r\phi(s,r'))))(J_R^{m+1}(r),\d)\big] (\d,J_S^{m+1}(s))= \\
& \Big[\Big(\phi(J_S^m(\psi(s,r')s'), J_R^m(r\phi(s,r'))) \phi(\psi(J_S^m(\psi(s,r')s'), J_R^m(r\phi(s,r'))), J_R^{m+1}(r)), \\
&\hspace{6.5cm} \psi(\psi(J_S^m(\psi(s,r')s'), J_R^m(r\phi(s,r'))), J_R^{m+1}(r))\Big)\Big](\d,J_S^{m+1}(s)) = \\
& \Big(\phi(J_S^m(\psi(s,r')s'), J_R^m(\phi(s,r'))), \psi(J_S^m(\psi(s,r')s'), J_R^m(\phi(s,r')))\Big)(\d,J_S^{m+1}(s)) =\\
& \Big(\phi(J_S^m(\psi(s,r')s'), J_R^m(\phi(s,r'))), \psi(J_S^m(\psi(s,r')s'), J_R^m(\phi(s,r')))J_S^{m+1}(s)\Big) =\\
& \Big(\phi(J_S^m(\psi(s,r')s'), [\phi(\psi(s,J_R^{m-1}(r')),J_R^m(r'))]), \psi(J_S^m(\psi(s,r')s'), J_R^m(\phi(s,r')))J_S^{m+1}(s)\Big) = \\
& \Big(\phi(J_S^m(s'),J_R^m(r')), \psi(J_S^m(s'),J_R^m(r'))\Big) = (\d,J_S^m(s'))(J_R^m(r'),\d) = J^m(r',s')
\end{split}
\end{align}
where; in the sixth equation we used \eqref{unit-action-QR-matched-III} and the second identity of \eqref{unit-action-QR-matched-II}, on the eighth equation we used \eqref{incerse-of-left-action}, and finally on the ninth equation we used (both identities of) \eqref{m-inverse-cond-matched}, in addition to Remark \ref{rk-m+uh} and \eqref{cong-eqn-II-matched}. 

Let, conversely, $R \bowtie S$ be an $m$-inverse loop with the multiplication \eqref{matched-pair-multp} and the permutation \eqref{perm-matched-pair-prod}. %Then, first of all, the fifth equality of \eqref{xJ(x)=d} holds, and hence the condition \eqref{left-action}.

In the case $m = 2\ell$, the seventh equation of \eqref{m=2l-matched} holds, and we have
\[
J_R^m(r\phi(s,r'))\phi(J_S^m(\psi(s,r')s'), \phi(J_S^{m+1}(s),J_R^{m+1}(r))) = J_R^m(r')
\]
together with
\[
\psi(J_S^m(\psi(s,r')s'), \phi(J_S^{m+1}(s),J_R^{m+1}(r)))\psi(J_S^{m+1}(s),J_R^{m+1}(r)) = J_S^m(s')
\]
for any $r,r' \in R$, and any $s,s' \in S$. In particular, for $r = \d$, the former equality yields
\[
J_R^m(\vp(s,r')) = J_R^m(r'),
\]
hence $\vp(s,r') = r'$, for any $r' \in R$, and any $s\in S$. For, on the other hand, $s = \d$, the latter results in
\[
\psi(J_S^m(s), J_R^{m+1}(r)) =  J_S^m(s).
\]
Once again, in view of the fact that $J_R:R\to R$ and $J_S:S \to S$ are both permutations, we deduce that $\psi(s,r) = s$ for any $r \in R$ and any $s\in S$. 

In the case $m = 2\ell+1$, however, the ninth equation of \eqref{m=2l+1-matched} holds, that is,
\[
\phi(J_S^m(\psi(s,r')s'), [\phi(\psi(s,J_R^{m-1}(r')),J_R^m(r'))]) = \phi(J_S^m(s'),J_R^m(r')),
\]
and
\[
\psi(J_S^m(\psi(s,r')s'), J_R^m(\phi(s,r')))J_S^{m+1}(s) = \psi(J_S^m(s),J_R^m(r')).
\]
The latter is nothing but the second identity of \eqref{m-inverse-cond-matched}, whereas the first identity of  \eqref{m-inverse-cond-matched} is obtained by taking $r' = J_R^{-m}(r)$ in the former.
\end{proof}

\begin{definition}
Let $(R,J_R,\d_R)$ be an $m_1$-inverse property loop such that $J_R(\d_R) = \d_R$, and that $J_R^{h_1} \in {\rm Aut}(R)$, and $(S,J_S,\d_S)$ be an $m_2$-inverse property loop such that $J_S(\d_S) = \d_S$, and that $J_S^{h_2} \in {\rm Aut}(S)$. Let also $m\in \B{Z}$ be a solution of 
\begin{align*}
& m \equiv m_1\,\, ({\rm mod}\,h_1), \\
& m \equiv m_2\,\, ({\rm mod}\,h_2).
\end{align*} 
Then, $(R,S)$ is called a ``matched pair of $m$-inverse property loops'' if $(R,J_R,\d_R)$ and $(S,J_S,\d_S)$ satisfy the conditions \eqref{unit-action-QR-matched-I}-\eqref{unit-action-QR-matched-IV}.
\end{definition}

\begin{remark}
We see that if $(R,S)$ is a matched pair of $m$-inverse property quasigroups, then $R\bowtie S:= R \times S$ is an $m$-inverse property quasigroup if and only if \eqref{m-inverse-cond-matched} holds. From the point of view of the generalization of groups, this is a manifestation of the fact that any group may be considered as an odd-inverse property quasigroup, while only commuttative groups fall into the category of even-inverse property quasigroups. Furthermore, we already know from the theory of matched pairs (of groups) that the matched pair group is commutative if and only if the mutual actions are trivial. 
\end{remark}

The following is an analogue of \cite[Prop. 6.2.15]{Majid-book}.

\begin{proposition}\label{prop-matched-pair-quasi-universal}
Let $(R,\d)$ be an $m_1$-inverse loop with the permutation $J_R:R \to R$ so that $J_R(\d) = \d$, and that $J_R^{h_1} \in {\rm Aut}(R)$, and $(S,\d)$ is an $m_2$-inverse loop with $J_S:S \to S$ such that $J_S(\d) = \d$, and that $J_S^{h_2} \in {\rm Aut}(S)$. Let also $m\in \B{Z}$ be a solution of 
\begin{align*}
& m \equiv m_1\,\, ({\rm mod}\,h_1), \\
& m \equiv m_2\,\, ({\rm mod}\,h_2),
\end{align*} 
and $(Q,\d)$ be an $m$-inverse loop so that $(R,\d)$ is an $m_1$-inverse subloop of $(Q,\d)$, and $(S,\d)$ is an $m_2$-inverse subloop of $(Q,\d)$;
\[
(R,\d) \xhookrightarrow{} (Q,\d) \xhookleftarrow{} (S,\d),
\]
that the multiplication in $Q$ yields an isomorphism
\begin{equation}\label{Theta-map}
\Theta:R\times S \to Q, \qquad (r,s) \mapsto rs,
\end{equation}
under which the multiplications are compatible as
\begin{equation}\label{compatibilities}
(rs)q = r(sq), \qquad q(rs) = (qr)s,
\end{equation}
and the inversions as 
\begin{equation}\label{J-on-Q}
J_Q(rs) = J_S(s)J_R(r), \qquad J_Q(sr) = J_R(r)J_S(s)
\end{equation}
for any $r\in R$, any $s \in S$, and any $q\in Q$. Then, $(R,S)$ is a matched pair of $m$-inverse loops, and $Q \cong R\bowtie S$ as quasigroups.
\end{proposition}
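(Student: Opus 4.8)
The plan is to push the loop structure of $Q$ across the bijection $\Theta$ of \eqref{Theta-map} onto $R\times S$, read off the mutual actions, and then deduce the matched pair axioms \eqref{unit-action-QR-matched-I}--\eqref{unit-action-QR-matched-IV} from the loop axioms of $Q$ together with the compatibilities \eqref{compatibilities} and the inversion rule \eqref{J-on-Q}. First I would define the actions: since $\Theta$ is bijective, each $sr$ (with $s\in S$, $r\in R$) has a unique normal form $sr=\phi(s,r)\psi(s,r)$ with $\phi(s,r)\in R$ and $\psi(s,r)\in S$, and this defines $\phi:S\times R\to R$, $\psi:S\times R\to S$. As $\d$ is the common identity, $\Theta^{-1}(r)=(r,\d)$ and $\Theta^{-1}(s)=(\d,s)$, which gives \eqref{unit-action-QR-matched-I} at once. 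Setting $s=\d$, resp.\ $r=\d$, in \eqref{J-on-Q} shows $J_Q|_R=J_R$ and $J_Q|_S=J_S$, whence $J_Q^k|_R=J_R^k$ and $J_Q^k|_S=J_S^k$ for all $k\in\B{Z}$ and $J_Q(\d)=\d$; iterating \eqref{J-on-Q} then gives the ``type alternation'' $J_Q^k(rs)=J_R^k(r)J_S^k(s)$ for $k$ even and $J_Q^k(rs)=J_S^k(s)J_R^k(r)$ for $k$ odd (and symmetrically on $sr$), which is the bookkeeping needed below.

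I would next establish $Q\cong R\bowtie S$. Applying the two compatibilities of \eqref{compatibilities} alternately, one straightens
\[
(rs)(r's')=r\big(s(r's')\big)=r\big((sr')s'\big)=r\big(\phi(s,r')(\psi(s,r')s')\big)=\big(r\phi(s,r')\big)\big(\psi(s,r')s'\big),
\]
so under $\Theta$ the multiplication of $Q$ becomes exactly \eqref{matched-pair-multp}; hence $\Theta$ is a quasigroup isomorphism and $Q\cong R\bowtie S$. Moreover $\Theta\big(\phi(J_S(s),J_R(r)),\psi(J_S(s),J_R(r))\big)=J_S(s)J_R(r)=J_Q(rs)$ shows the permutation \eqref{perm-matched-pair-prod} corresponds to $J_Q$ under $\Theta$, so the remaining axioms may be checked in the coordinates of $R\bowtie S$.

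There remain \eqref{unit-action-QR-matched-I-I}--\eqref{unit-action-QR-matched-IV}. Since $Q$ is $m$-inverse and $J_Q(\d)=\d$, putting $y=\d$ in $J_Q^m(xy)J_Q^{m+1}(x)=J_Q^m(y)$ gives $J_Q^m(x)J_Q^{m+1}(x)=\d$, i.e.\ $xJ_Q(x)=\d$ for every $x$ (as $J_Q^m$ is onto). Feeding this $x=rs$ and rewriting $J_Q(rs)=J_S(s)J_R(r)$, the identity $(r,s)J(r,s)=(\d,\d)$ splits, by uniqueness of the normal form, into $\phi(s,\phi(J_S(s),J_R(r)))=J_R(r)$ and $\psi(s,\phi(J_S(s),J_R(r)))\psi(J_S(s),J_R(r))=\d$; since $J_R$ is a bijection, replacing $J_R(r)$ by an arbitrary element of $R$ gives precisely \eqref{unit-action-QR-matched-I-I} and \eqref{unit-action-QR-matched-IV}. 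For \eqref{unit-action-QR-matched-II} and \eqref{unit-action-QR-matched-III} I would feed the $m$-inverse law of $Q$ the inputs $x=a\in R$ and $y=bc$ with $b\in R$, $c\in S$; then $xy=(ab)c$ by $q(rs)=(qr)s$, and straightening both sides of $J_Q^m(xy)J_Q^{m+1}(x)=J_Q^m(y)$ into normal form and comparing the $R$- and $S$-components yields, when $m$ is odd and with $s:=J_S^m(c)$, exactly \eqref{unit-action-QR-matched-III} and \eqref{unit-action-QR-matched-II} respectively.

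The main obstacle is this last computation, and specifically the parity of $m$: the type alternation makes the normal forms on the two sides depend on whether $m$ is even or odd, so the argument genuinely branches. When $m$ is even the same component comparison instead forces $\psi(J_S^m(c),J_R^{m+1}(a))=J_S^m(c)$, i.e.\ $\psi$ is trivial, and then (taking $b=\d$ and using uniqueness of right inverses) $\phi(J_S^m(c),J_R^{m+1}(a))=J_R^{m+1}(a)$, i.e.\ $\phi$ is trivial; with both actions trivial, \eqref{unit-action-QR-matched-II} and \eqref{unit-action-QR-matched-III} collapse to the $m$-inverse property of $R$ itself, which holds since $m\equiv m_1\ ({\rm mod}\ h_1)$ by Remark \ref{rk-m+uh}. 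Thus the two points requiring care are keeping the straightening consistent through the repeated use of \eqref{compatibilities}, and isolating the even case, in which the matched pair necessarily degenerates to a (semi)direct product.
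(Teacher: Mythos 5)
Your proposal is correct and takes essentially the same route as the paper's proof: you define $\phi,\psi$ from the normal form $\Theta^{-1}(sr)$, obtain \eqref{unit-action-QR-matched-I-I} and \eqref{unit-action-QR-matched-IV} by splitting $(rs)J_Q(rs)=\d$, and obtain \eqref{unit-action-QR-matched-II}--\eqref{unit-action-QR-matched-III} by feeding the $m$-inverse law of $Q$ the inputs $x\in R$, $y=bc$, which is exactly the paper's specialization $s=\d$ of the four-variable identity, with the same parity branching (the paper's even case, via the seventh equality of \eqref{m=2l-matched}, likewise forces the trivial actions recorded in \eqref{m-inverse-cond-matched}). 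Your explicit verifications that $J_Q(\d)=\d$, $J_Q|_R=J_R$, $J_Q|_S=J_S$, $qJ_Q(q)=\d$, and the type-alternation of $J_Q^k$ are details the paper leaves implicit, not a different argument.
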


\begin{proof}
Let us begin with the mappings
\begin{equation}\label{phi-and-psi}
\phi:S\times R \to R, \qquad \psi:S\times R \to S
\end{equation}
given by
\[
\phi(s,r) := (\pi_1\circ \Theta^{-1})(sr), \qquad \psi(s,r) := (\pi_2\circ \Theta^{-1})(sr),
\]
where
\[
\pi_1:R\times S \to R, \qquad \pi_2:R\times S \to S
\]
are the projections onto the first and the second component respectively. It then follows at once that
\begin{equation}\label{multp-in-Q-and-RS}
sr = \Theta\big(\phi(s,r),\psi(s,r)\big) = \Theta((\d,s)(r,\d)),
\end{equation}
that is, the isomorphism \eqref{Theta-map} respect the multiplications in $Q$ and $R\bowtie S$.

It remains to show that the mappings \eqref{phi-and-psi} have the properties \eqref{unit-action-QR-matched-I}-\eqref{unit-action-QR-matched-IV}.

The first one, \eqref{unit-action-QR-matched-I}, follows from the consideration of $r=\d$ and $s=\d$ in \eqref{multp-in-Q-and-RS}, respectively.

Next, in view of \eqref{J-on-Q} the property $qJ_Q(q) = \d$ implies $(rs)J_Q(rs) = \d$ for any $r\in R$ and any $s\in S$, which in turn implies \eqref{unit-action-QR-matched-I-I} and \eqref{unit-action-QR-matched-IV}.

On the other hand, \eqref{compatibilities}, and $J^m_Q(qq')J^{m+1}_Q(q) = J^m_Q(q')$ for any $q,q'\in Q$ yields, along the lines of \eqref{m=2l+1-matched},
\begin{align*}
& \Big[\Big(\phi(J_S^m(\psi(s,r')s'), J_R^m(r\phi(s,r'))) \phi(\psi(J_S^m(\psi(s,r')s'), J_R^m(r\phi(s,r'))), J_R^{m+1}(r)), \\
&\hspace{6.5cm} \psi(\psi(J_S^m(\psi(s,r')s'), J_R^m(r\phi(s,r'))), J_R^{m+1}(r))\Big)\Big](\d,J_S^{m+1}(s)) = \\
& \Big(\phi(J_S^m(s'),J_R^m(r')), \psi(J_S^m(s'),J_R^m(r'))\Big). \end{align*}
In particular, for $s=\d$ then we see that
\begin{align*}
& \Big[\Big(\phi(J_S^m(s'), J_R^m(rr')) \phi(\psi(J_S^m(s'), J_R^m(rr')), J_R^{m+1}(r)), \\
&\hspace{6.5cm} \psi(\psi(J_S^m(s'), J_R^m(rr')), J_R^{m+1}(r))\Big)\Big] = \\
& \Big(\phi(J_S^m(s'),J_R^m(r')), \psi(J_S^m(s'),J_R^m(r'))\Big),
\end{align*}
which is equivalent to \eqref{unit-action-QR-matched-II} and \eqref{unit-action-QR-matched-III}.

Finally, having obtained \eqref{unit-action-QR-matched-I}-\eqref{unit-action-QR-matched-IV}, the condition \eqref{m-inverse-cond-matched} follows from the ninth equality of \eqref{m=2l+1-matched} in the odd case, while it is a result of the seventh equality of \eqref{m=2l-matched} in the even case.
\end{proof}

Let us illustrate with an example.

\begin{example}
Given a matched pair of groups $(G,H)$, and two abelian groups $V$ and $W$, let
\begin{equation}\label{sigma-map}
\Lambda:(G\bowtie H)\times (G\bowtie H) \to V\times W, \qquad \Lambda\Big((x,y), (x',y')\Big) := \Big(\vp(x,x'), \chi(y,y')\Big),
\end{equation}
for such $\vp:G\times G\to V$ and $\chi:H\times H \to W$ that
\begin{equation}\label{vp-invariance}
\vp(x,x') = \vp(x,y\rt x')
\end{equation}
and 
\begin{equation}\label{chi-invariance}
\chi(y,y') = \vp(y\lt x,y')
\end{equation}
for any $x,x'\in G$, and any $y,y'\in H$. Then let $(G\bowtie H) \times_\Lambda (V\times W)$ be the $(2\ell+1)$-invertible loop of Example \ref{ex-odd-invtble-loop}. As such, \eqref{sigma-map} satisfies \eqref{quasi-0}, and we obtain
\[
\vp(1,x) = 0 = \vp(x,1), \qquad \chi(1,y) = 0 = \chi(y,1)
\]
for any $(x,y) \in G\times H$. 

Similarly, imposing \eqref{quasi-I} onto \eqref{sigma-map}, 
\[
\Lambda\Big((x,y), (x,y)^{-1}\Big) = \Lambda\Big((x,y), (y^{-1}\rt x^{-1}, y^{-1} \lt x^{-1})\Big) = 0
\]
 we obtain
 \[
 \vp(x,y^{-1}\rt x^{-1}) = 0, \qquad \chi(y,y^{-1} \lt x^{-1}) = 0
 \]
 for any $(x,y) \in G\times H$. In particular, for $y=1 \in H$ we obtain
\[
\vp(x,x^{-1}) = 0,
\]
for any $x\in G$, and setting $x=1\in G$ we arrive at
\[
\chi(y,y^{-1}) = 0,
\]
for any $y\in H$.

Finally, since \eqref{sigma-map} is bound to satisfy \eqref{quasi-II}, that is,
\[
\Lambda\Big((x',y')^{-1}(x,y)^{-1}, (x,y)\Big) = \Lambda\Big((x,y),(x',y')\Big),
\]
for any $(x,y), (x',y') \in G\times H$, or equivalently
\[
\Lambda\Big((x',y')(x,y), (x,y)^{-1}\Big) = \Lambda\Big((x,y)^{-1},(x',y')^{-1}\Big),
\]
we have
\[
\Lambda\Big((x'(y'\rt x), (y'\lt x)y), (y^{-1} \rt x^{-1},y^{-1} \lt x^{-1})\Big) = \Lambda\Big((y^{-1} \rt x^{-1},y^{-1} \lt x^{-1}),(y'^{-1} \rt x'^{-1},y'^{-1} \lt x'^{-1})\Big),
\]
that is,
\[
\vp(x'(y'\rt x), y^{-1} \rt x^{-1}) = \vp(y^{-1} \rt x^{-1}, y'^{-1} \rt x'^{-1})
\]
and
\[
\chi((y'\lt x)y, y^{-1} \lt x^{-1}) = \chi(y^{-1} \lt x^{-1}, y'^{-1} \lt x'^{-1})
\]
for any $x,x'\in G$, and any $y,y'\in H$. Now $y=y'=1\in H$ (resp. $x=x'=1 \in G$) leads to 
\[
\vp(x'x, x^{-1}) = \vp(x^{-1}, x'^{-1}) \qquad ({\rm resp.}\,\,\, \chi(y'y, y^{-1}) = \chi(y^{-1}, y'^{-1})).
\]
As a result, we have the $(2\ell_1+1)$-invertible loop $G\times_\vp V$, and the $(2\ell_2+1)$-invertible loop $H\times_\chi W$, for any $\ell_1,\ell_2\in \B{Z}$, in such a way that
\[
G\times_\vp V \to (G\bowtie H) \times_\Lambda (V\times W), \qquad (x,v) \mapsto \Big((x,1),(v,0)\Big)
\]
and
\[
H\times_\chi W \to (G\bowtie H) \times_\Lambda (V\times W), \qquad (y,w) \mapsto \Big((1,y),(0,w)\Big)
\] 
are quasigroup homomorphisms. Moreover, the multiplication in $(G\bowtie H) \times_\Lambda (V\times W)$ yields the isomorphism
\begin{align*}
& \Theta: (G\times_\vp V) \times (H\times_\chi W) \to (G\bowtie H) \times_\Lambda (V\times W), \\
& \Big((x,v),(y,w)\Big) \mapsto \Big((x,y),(v,w)\Big).
\end{align*}
Let us finally show that \eqref{compatibilities} and \eqref{J-on-Q} are satisfied. As for the former, we simply observe for any $(x,v) \in G\times_\vp V$, any $(y,w) \in H\times_\chi W$, and any $((x',y'),(v',w')) \in (G\bowtie H) \times_\Lambda (V\times W)$,
\begin{align*}
& \Big[(x,v)(y,w)\Big]\Big((x',y'),(v',w')\Big) = \\
& \Big[\Big((x,1),(v,0)\Big)\Big((1,y),(0,w)\Big)\Big]\Big((x',y'),(v',w')\Big) = \\
& \Big((x,y),(v,w)\Big)\Big((x',y'),(v',w')\Big) = \\
& \Big((x(y\rt x'), (y\lt x')y'),(\vp(x,x')+v+v', \chi(y,y')+w+w')\Big) = \\
& \Big((x(y\rt x'), (y\lt x')y'),(\vp(x,y\rt x')+v+v', \chi(y,y')+w+w')\Big) = \\
& \Big((x,1),(v,0)\Big)\Big((y\rt x',(y\rt x')y'),(v',\chi(y,y')+w+w')\Big) = \\
& \Big((x,1),(v,0)\Big)\Big[\Big((1,y),(0,w)\Big)\Big((x',y'),(v',w')\Big)\Big] = \\
& (x,v)\Big[(y,w)\Big((x',y'),(v',w')\Big)\Big],
\end{align*}
where we used \eqref{vp-invariance} in the fourth equality. Similarly, \eqref{chi-invariance} yields 
\[
\Big((x,y),(v,w)\Big)\Big[(x',v')(y',w')\Big] = \Big[\Big((x,y),(v,w)\Big)(x',v')\Big](y',w').
\]
Accordingly, \eqref{compatibilities} holds. As for \eqref{J-on-Q}, we do note that
\begin{align*}
& J\Big((x,v)(y,w)\Big) = J\Big((x,y),(v,w)\Big) = \Big((x,y)^{-1},(-v,-w)\Big) = \\
& \Big((y^{-1}\rt x^{-1},y^{-1}\lt x^{-1}),(v,w)\Big) =  \Big((1,y^{-1}),(0,-w)\Big)\Big((x^{-1},1),(-v,0)\Big) = \\
& (y^{-1},-w)(x^{-1},-v) = J_{H\times_\chi W}(y,w)J_{G\times_\vp V}(x,v),
\end{align*}
and that
\begin{align*}
& J\Big((y,w)(x,v)\Big) = J\Big((y\rt x, y\lt x),(v,w)\Big) = \Big((y\rt x, y\lt x)^{-1},(-v,-w)\Big) = \\
& \Big((x^{-1},y^{-1}),(-v,-w)\Big) = \Big((x^{-1},1),(-v,0)\Big)\Big((1,y^{-1}),(0,-w)\Big) = \\
& (x^{-1},-v)(y^{-1},-w) = J_{G\times_\vp V}(x,v)J_{H\times_\chi W}(y,w).
\end{align*}
We may now say that the hypotheses of Proposition \ref{prop-matched-pair-quasi-universal} hold with $R:= G\times_\vp V$, $S:= H\times_\chi W$, $Q:= (G\bowtie H)\times_\Lambda (V\times W)$, $m:=2\ell +1$, $m_1:=2\ell_1+1$, $m_2:=2\ell_2+1$, for any $\ell,\ell_1,\ell_2\in \B{Z}$, and $h_1 = 2 = h_2$, that $(G\times_\vp V, H\times_\chi W)$ is a matched pair of $(2\ell +1)$-invertible loops, and that
\[
(G\bowtie H)\times_\Lambda (V\times W) \cong (G\times_\vp V) \bowtie (H\times_\chi W).
\]
Indeed, the mutual  ``actions'' 
\[
\phi:(H\times_\chi W) \times (G\times_\vp V) \to (G\times_\vp V), \qquad \Big((y,w),(x,v)\Big) \mapsto (y\rt x, v) 
\]
and
\[
\psi:(H\times_\chi W) \times (G\times_\vp V) \to (H\times_\chi W), \qquad \Big((y,w),(x,v)\Big) \mapsto (y\lt x, w) 
\]
which fit (in view of \eqref{vp-invariance} and \eqref{chi-invariance}) into
\begin{align*}
&\Big((x,v);(y,w)\Big)\Big((x',v');(y',w')\Big)  =\Big((x,y),(v,w)\Big)\Big((x',y'),(v',w')\Big) = \\
&\Big((x,v)\phi\Big((y,w),(x',v')\Big); \psi\Big((y,w),(x',v')\Big)(y',w')\Big)
\end{align*}
satisfy the compatibilities \eqref{unit-action-QR-matched-I}-\eqref{unit-action-QR-matched-IV}, as well as \eqref{m-inverse-cond-matched}, merely from the matched pair compatibilities for groups.
\end{example}

\section{Linearizations}\label{sect-linearization}

Following the terminology and the point of view of \cite{KlimMaji10-II,KlimMaji10}, we shall consider the Hopf analogues of the $m$-inverse property loops, under the name ``$m$-invertible Hopf quasigroup''.

\subsection{$m$-invertible Hopf quasigroups}\label{subsect-m-invtb-Hopf-quasigr}~

Along the lines of \cite[Def. 4.1]{KlimMaji10}, see also \cite[Def. 2.1]{KlimMaji10-II}, we now introduce what we call an ``$m$-inverse property Hopf quasigroup''.

\begin{definition}\label{def-m-inv-loop}
Let $\C{H}$ be a $k$-linear space equipped with the linear maps $\mu:\C{H}\ot \C{H}\to \C{H}$, $\eta:k\to \C{H}$, $\D:\C{H} \to \C{H}\ot \C{H}$, $\ve:\C{H} \to k$, and $S:\C{H}\to \C{H}$. Then, the six-tuple $(\C{H},\mu,\eta,\D,\ve,S)$ is called an $m$-inverse property Hopf quasigroup if 
\begin{itemize}
\item[(i)] $(\C{H},\mu,\eta)$ is a unital, not-necessarily associative algebra,
\item[(ii)] $(\C{H},\D,\ve)$ is a coassociative and counital coalgebra,
\item[(iii)] $\D:\C{H} \to \C{H}\ot \C{H}$ and $\ve:\C{H} \to k$ are multiplicative,
\item[(iv)] $S:\C{H}\to \C{H}$ is the unique coalgebra anti-automorphism satisfying
\begin{equation}\label{S-prop}
h\ns{1}S(h\ns{2}) = \ve(h)\d = S(h\ns{1})h\ns{2},
\end{equation}
so that
\begin{equation}\label{S-m-prop}
S^m(h\ns{2}g)S^{m+1}(h\ns{1}) = \ve(h)S^m(g)
\end{equation}
holds for any $h,g\in \C{H}$.
\end{itemize}
\end{definition}

\begin{example}
Let $(Q,\d,J)$ be an $m$-inverse property loop. Then the linear space $kQ$ is a $m$-inverse property Hopf quasigroup via 
\begin{itemize}
\item[(i)] the multiplication
\[
\mu:kQ\ot kQ \to kQ, \qquad \mu(q,q') := qq',
\]
defined as a linear extension of the multiplication on $Q$, the unit
\[
\eta:k\to kQ, \qquad \eta(\a) := \a\d,
\]
\item[(ii)] the comultiplication
\[
\D:kQ\to kQ\ot kQ, \qquad \D(q) := q\ot q
\]
as the linear extension of the diagonal map, the counit
\[
\ve:kQ\to k, \qquad \ve(q) = 1,
\]
\item[(iii)] and the ``antipode''
\[
S:kQ\to kQ, \qquad S(q) := J(q).
\]
\end{itemize}
\end{example}

The following adaptation of \cite[Rk. 2.2]{KeedShch03} will be instrumental in the construction of the products of Hopf quasigroups.

\begin{remark}\label{Rk-tensor-prod-Hopf-quasigr}
Let $(\C{H},\mu,\eta,\D,\ve,S)$ be an $m$-inverse property Hopf quasigroup such that $S^r\in {\rm Aut}(\C{H})$, \ie $S^r(hg) = S^r(h)S^r(g)$, and $\D(S^r(h)) = S^r(h\ns{1}) \ot S^r(h\ns{2})$, for any $h \in \C{H}$. Then, $(\C{H},\mu,\eta,\D,\ve,S)$ be an $(m+ur)$-inverse property Hopf quasigroup for any $u\in \B{Z}$. 

Indeed, 
\begin{align*}
& S^{m+ur}(h\ns{2}g)S^{m+1+ur}(h\ns{1}) = S^m(S^{ur}(h\ns{2})S^{ur}(g))S^{m+1}(S^{ur}(h\ns{1})) = \\
& S^m(S^{ur}(h)\ns{2}S^{ur}(g))S^{m+1}(S^{ur}(h)\ns{1}) = S^m(S^{ur}(g)) = S^{m+ur}(g).
\end{align*}
\end{remark}

\subsection{Matched pairs of $m$-inverse property Hopf quasigroups}\label{subsect-matched-pair-Hopf-quasi}~

For convenience, let us begin with the tensor product Hopf quasigroups. More precisely, the following result is the Hopf counterpart of  \cite[Thm. 5.1]{KeedShch03}, that is, Theorem \ref{thm-quasigr-direct-prod} above.

\begin{theorem}
Let $(\C{H}_1,\mu_1,\eta_1,\D_1,\ve_1,S_1)$ be an $m_1$-inverse Hopf quasigroup so that $S_1^{h_1} \in {\rm Aut}(\C{H}_1)$, and $(\C{H}_2,\mu_2,\eta_2,\D_2,\ve_2,S_2)$ be an $m_2$-inverse Hopf quasigroup such that $S_2^{h_2} \in {\rm Aut}(\C{H}_2)$. Then $\C{H}_1 \ot \C{H}_2$ is an $m$-inverse quasigroup with the tensor product structure maps, for any $m \in \B{Z}$ that satisfies 
\begin{align}\label{cong-eqn-Hopf}
\begin{split}
& m \equiv m_1\,\, ({\rm mod}\,h_1), \\
& m \equiv m_2\,\, ({\rm mod}\,h_2).
\end{split}
\end{align}
\end{theorem}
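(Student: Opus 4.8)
Let $(\C{H}_1,\mu_1,\eta_1,\D_1,\ve_1,S_1)$ be an $m_1$-inverse Hopf quasigroup so that $S_1^{h_1} \in {\rm Aut}(\C{H}_1)$, and $(\C{H}_2,\mu_2,\eta_2,\D_2,\ve_2,S_2)$ be an $m_2$-inverse Hopf quasigroup such that $S_2^{h_2} \in {\rm Aut}(\C{H}_2)$. Then $\C{H}_1 \ot \C{H}_2$ is an $m$-inverse quasigroup with the tensor product structure maps, for any $m \in \B{Z}$ that satisfies
\begin{align*}
& m \equiv m_1\,\, ({\rm mod}\,h_1), \\
& m \equiv m_2\,\, ({\rm mod}\,h_2).
\end{align*}

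\emph{Proof proposal.} The plan is to mirror, on the Hopf level, the congruence argument behind Theorem \ref{thm-quasigr-direct-prod}, using Remark \ref{Rk-tensor-prod-Hopf-quasigr} as the exact substitute for Remark \ref{rk-m+uh}. First I would record that $\C{H}_1\ot\C{H}_2$, endowed with the tensor product structure maps $\mu=(\mu_1\ot\mu_2)\circ(\Id\ot\tau\ot\Id)$, $\eta=\eta_1\ot\eta_2$, $\D=(\Id\ot\tau\ot\Id)\circ(\D_1\ot\D_2)$, $\ve=\ve_1\ot\ve_2$, and antipode $S=S_1\ot S_2$, satisfies conditions (i)--(iii) of Definition \ref{def-m-inv-loop}; this is the routine verification that a tensor product of unital (non-associative) algebras and of coassociative counital coalgebras is again such, with $\D$ and $\ve$ multiplicative, and with $S=S_1\ot S_2$ the required coalgebra anti-automorphism satisfying \eqref{S-prop} componentwise. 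I would dispatch this quickly, as it does not involve the $m$-inverse condition itself.

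The heart of the argument is verifying \eqref{S-m-prop} for the prescribed $m$. Since $m\equiv m_i\pmod{h_i}$, write $m=m_1+u_1h_1=m_2+u_2h_2$ for suitable $u_1,u_2\in\B{Z}$. Applying Remark \ref{Rk-tensor-prod-Hopf-quasigr} to each factor, the hypothesis $S_i^{h_i}\in{\rm Aut}(\C{H}_i)$ upgrades $\C{H}_i$ from an $m_i$-inverse to an $m$-inverse property Hopf quasigroup, so that
\begin{equation*}
S_i^{m}(h\ns{2}g)\,S_i^{m+1}(h\ns{1}) = \ve_i(h)\,S_i^{m}(g)
\end{equation*}
holds in each $\C{H}_i$ for the \emph{common} exponent $m$. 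The key observation is that $S^m=S_1^m\ot S_2^m$ and, because $S$ and the structure maps are defined factorwise, the left-hand side $S^m(\x\ns{2}\zeta)S^{m+1}(\x\ns{1})$ for $\x=h\ot h'$ and $\zeta=g\ot g'$ factors as the tensor product of the two one-sided expressions, thanks to the interchange of the flip $\tau$ with the comultiplication and the multiplicativity of $S^m$ on each slot.

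Concretely, I would compute $S^m\big((h\ot h')\ns{2}(g\ot g')\big)\,S^{m+1}\big((h\ot h')\ns{1}\big)$ by expanding the tensor comultiplication $\D(h\ot h')=(h\ns{1}\ot h'\ns{1})\ot(h\ns{2}\ot h'\ns{2})$, multiplying inside each factor, and applying $S^m\ot S^{m+1}$; the result collapses, by the componentwise identity above, to $\big(\ve_1(h)S_1^m(g)\big)\ot\big(\ve_2(h')S_2^m(g')\big)=\ve(h\ot h')\,S^m(g\ot g')$, which is exactly \eqref{S-m-prop} for $\C{H}_1\ot\C{H}_2$. I expect the main obstacle to be purely bookkeeping: keeping the Sweedler indices and the flip $\tau$ straight so that the ``$\ns{2}$ acts first, $\ns{1}$ afterward'' pattern in \eqref{S-m-prop} aligns across both tensor factors. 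No genuine difficulty arises beyond this, since the congruence conditions guarantee a \emph{single} shared exponent $m$ works for both factors simultaneously, and the existence of such $m$ is precisely the solvability of the congruence system recorded after Theorem \ref{thm-quasigr-direct-prod}.
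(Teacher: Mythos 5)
Your proposal is correct and follows essentially the same route as the paper's own proof: a routine verification of conditions (i)--(iii) of Definition \ref{def-m-inv-loop} for the tensor product structure maps, followed by an application of Remark \ref{Rk-tensor-prod-Hopf-quasigr} to regard both factors as $m$-inverse for the common solution $m$ of the congruence system, after which the identity \eqref{S-m-prop} factors componentwise as $\big(\ve_1(h)S_1^m(g)\big)\ot\big(\ve_2(h')S_2^m(g')\big)$. No substantive difference from the paper's argument.
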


\begin{proof}
It follows at once that
\begin{itemize}
\item[(i)] $(\C{H}_1\ot \C{H}_2,\mu_\ot,\eta_\ot)$ is a (not necessarily associative) unital algebra via
\begin{align*}
& \mu_\ot:= (\mu_1\ot \mu_2)\circ(\Id\ot \tau\ot \Id):(\C{H}_1\ot \C{H}_2) \ot (\C{H}_1\ot \C{H}_2) \to \C{H}_1\ot \C{H}_2, \\
& \mu_\ot\Big((h\ot h')\ot (g\ot g')\Big) := \mu_1(h\ot g) \ot \mu_2(h'\ot g')
\end{align*}
and
\[
\eta_\ot:=\eta_1\ot \eta_2: k \to \C{H}_1\ot \C{H}_2, \qquad \eta_\ot(\a) := \a\eta_1(1)\ot \eta_2(1),
\]
\item[(ii)] $(\C{H}_1\ot \C{H}_2,(\Id\ot \tau\ot \Id)\circ(\D_1\ot \D_2),\ve_1\ot \ve_2)$ is a coassociative counital coalgebra, such that
\item[(iii)] the coalgebra structure maps
\begin{align*}
& \D_\ot:=(\Id\ot \tau\ot \Id)\circ(\D_1\ot \D_2): \C{H}_1\ot \C{H}_2 \to (\C{H}_1\ot \C{H}_2) \ot (\C{H}_1\ot \C{H}_2), \\
& \D_\ot(h\ot h') = (h\ns{1} \ot h'\ns{1}) \ot (h\ns{2} \ot h'\ns{2})
\end{align*}
and 
\[
\ve_\ot:=\ve_1\ot \ve_2: \C{H}_1\ot \C{H}_2 \to k, \qquad \ve_\ot(h\ot h') = \ve_1(h)\ve_2(h')
\]
are multiplicative.
\item[(iv)] Finally, in view of Remark \ref{Rk-tensor-prod-Hopf-quasigr} above, for any solution $m\in \B{Z}$ of \eqref{cong-eqn-Hopf}
\begin{align*}
& (S_1\ot S_2)^m((h\ns{2}\ot h'\ns{2})(g\ot g'))(S_1\ot S_2)^{m+1}(h\ns{1}\ot h'\ns{1}) = \\
& S_1^m(h\ns{2}g)S_1^{m+1}(h\ns{1}) \ot S_2^m(h'\ns{2}g')S_2^{m+1}(h'\ns{1}) = S_1^m(g)\ot S_2^m(g') = (S_1\ot S_2)^m(g\ot g').
\end{align*}
\end{itemize}
\end{proof}

As for the matched pair construction, Proposition \ref{prop-matched-pair-loops} upgrades to the following proposition. However, we shall first need a technical lemma.

\begin{lemma}\label{lemma-antipode}
Let $(\C{H}_1,\mu_1,\eta_1,\D_1,\ve_1,S_1)$ be an $m_1$-inverse Hopf quasigroup, and $(\C{H}_2,\mu_2,\eta_2,\D_2,\ve_2,S_2)$ be an $m_2$-inverse Hopf quasigroup. Moreover, let there be two maps $\phi:\C{H}_2 \ot \C{H}_1 \to \C{H}_1$ and $\psi:\C{H}_2 \ot \C{H}_1 \to \C{H}_2$
satisfying
\begin{align}
%& \phi(\d_2,h) = h, \quad \phi(h',\d_1) =\d_1, \qquad \psi(\d_2,h) = \d_2, \quad \psi(h',\d_1) =h', \label{unit-action-QR-matched-I-Hopf}\\
%& \phi(S_2^m(h'\ns{2}g'),\phi(S_2^{m+1}(h'\ns{1}),h)) = \phi(S_2^m(g'),h), \label{unit-action-QR-matched-II-Hopf-a}\\
& \phi(S(h'\ns{1}),\phi(h'\ns{2},h)) = \ve_2(h')h = \phi(h'\ns{1},\phi(S(h'\ns{2}),h)) , \label{unit-action-QR-matched-II-Hopf-a'-lemma}\\
%& \psi(\psi(h', S_1^m(h\ns{2}g)),S_1^{m+1}(h\ns{1})) = \psi(h',S_1^m(g)), \label{unit-action-QR-matched-II-Hopf-b}\\
& \psi(\psi(h',S(h\ns{1})),h\ns{2}) = \ve_1(h)h' = \psi(\psi(h',h\ns{1}),S(h\ns{2})) \label{unit-action-QR-matched-II-Hopf-b'-lemma}\\
& \D_1(\phi(h',h)) = \phi(h'\ns{1},h\ns{1}) \ot \phi(h'\ns{2},h\ns{2}), \qquad \ve_1(\phi(h',h)) = \ve_1(h)\ve_2(h'), \label{module-coalg-I-lemma}\\
& \D_2(\psi(h',h)) = \psi(h'\ns{1},h\ns{1}) \ot \psi(h'\ns{2},h\ns{2}), \qquad \ve_2(\psi(h',h)) = \ve_1(h)\ve_2(h'), \label{module-coalg-II-lemma}\\
%& \phi(h',S_1^m(g)) = \begin{cases}
%\phi(h'\ns{1},S_1^m(h\ns{3}g\ns{2}))\phi(\psi(h'\ns{2},S_1^m(h\ns{2}g\ns{1})),S_1^{m+1}(h\ns{1}))  & \text{if  } m = 2\ell+1, \\
%\phi(h'\ns{1},S_1^m(h\ns{2}g\ns{1}))\phi(\psi(h'\ns{2},S_1^m(h\ns{3}g\ns{2})),S_1^{m+1}(h\ns{1})) & \text{if  } m = 2\ell,
%\end{cases}  \label{unit-action-QR-matched-III-Hopf}\\
& \phi(h'\ns{1},S(h\ns{2}))\big[\phi(\psi(h'\ns{2},S(h\ns{1})),h\ns{3})\big] = \ve_1(h)\ve_2(h') =  \phi(h'\ns{1}, h\ns{1})\big[\phi(\psi(h'\ns{2},h\ns{2}),S(h\ns{3}))\big], \label{unit-action-QR-matched-III-Hopf-a-lemma}\\
%& \psi(S_2^m(g'),h) = \begin{cases}
%\psi(S_2^m(h'\ns{3}g'),\phi(S_2^{m+1}(h'\ns{1}),h\ns{1}))\psi(S_2^{m+1}(h'\ns{2}),h\ns{2})  & \text{if  } m = 2\ell+1, \\
%\psi(S_2^m(h'\ns{3}g'),\phi(S_2^{m+1}(h'\ns{2}),h\ns{1}))\psi(S_2^{m+1}(h'\ns{1}),h\ns{2}) & \text{if  } m = 2\ell, 
%\end{cases} \label{unit-action-QR-matched-IV-Hopf}\\
& \big[\psi(S(h'\ns{1}),\phi(h'\ns{2},h\ns{1}))\big]\psi(h'\ns{3},h\ns{2}) = \ve_1(h)\ve_2(h') = \label{unit-action-QR-matched-IV-Hopf-a-lemma}\\
& \hspace{6cm} \big[\psi(h'\ns{1},\phi(S(h'\ns{3}),h\ns{1}))\big]\psi(S(h'\ns{2}),h\ns{2}), \notag\\
& \psi(h'\ns{1},h\ns{1}) \ot \phi(h'\ns{2},h\ns{2}) = \psi(h'\ns{2},h\ns{2}) \ot \phi(h'\ns{1},h\ns{1})  \label{unit-action-QR-matched-V-Hopf-lemma}
\end{align}
for any $h,g \in \C{H}_1$, any $h',g' \in \C{H}_2$. Then the mapping
\begin{align}\label{perm-matched-pair-prod-Hopf-lemma}
\begin{split}
& S_{\bowtie}: \C{H}_1 \ot \C{H}_2 \to \C{H}_1 \ot \C{H}_2, \\ 
& S_{\bowtie}(h\ot h') := (\d_1\ot S_2(h'))(S_1(h)\ot \d_2) = \Big(\phi\big(S_2(h'\ns{2}),S_1(h\ns{2})\big)\ot\psi\big(S_2(h'\ns{1}),S_1(h\ns{1})\big) \Big),
\end{split}
\end{align}
satisfies
\[
S_{\bowtie}((\d_1,h')(h,\d_2)) = (S_1(h),S_2(h'))
\]
for any $h\in \C{H}_1$, and any $h'\in \C{H}_2$.
\end{lemma}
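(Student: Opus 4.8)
The plan is to evaluate $S_{\bowtie}$ on $(\d_1,h')(h,\d_2)$ by a direct Sweedler expansion, and to recognise the outcome as the Hopf transcription of the loop identity $J\big((\d,s)(r,\d)\big)=(J_R(r),J_S(s))$ proved inside Proposition \ref{prop-matched-pair-loops}. First I would apply the bicrossproduct multiplication $(h\ot h')(g\ot g')=h\,\phi(h'\ns{1},g\ns{1})\ot \psi(h'\ns{2},g\ns{2})\,g'$ to obtain
\[
(\d_1,h')(h,\d_2)=\phi(h'\ns{1},h\ns{1})\ot \psi(h'\ns{2},h\ns{2}).
\]
Feeding this (summed) element into the defining formula for $S_{\bowtie}$ and expanding the product $(\d_1\ot S_2(-))(S_1(-)\ot \d_2)$ by the same multiplication, I would push the comultiplications through $\phi,\psi$ with \eqref{module-coalg-I-lemma}, \eqref{module-coalg-II-lemma} and through $S_1,S_2$ with the coalgebra anti-automorphism property. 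This produces the four-leg expression
\[
\phi\big(S_2(\psi(h'\ns{4},h\ns{4})),\,S_1(\phi(h'\ns{2},h\ns{2}))\big)\ot \psi\big(S_2(\psi(h'\ns{3},h\ns{3})),\,S_1(\phi(h'\ns{1},h\ns{1}))\big).
\]

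Next I would extract the $\phi$-antipode relation that plays the role of \eqref{incerse-of-left-action}. Combining the antipode axiom \eqref{S-prop} in $\C{H}_1$ with \eqref{unit-action-QR-matched-III-Hopf-a-lemma} yields
\[
S_1(\phi(h',h))=\phi\big(\psi(h'\ns{1},h\ns{1}),S_1(h\ns{2})\big).
\]
Substituting this into the factor $S_1(\phi(\cdots))$ that occurs in \emph{both} legs turns its second slot into $\phi(\psi(\cdots),S_1(h\ns{?}))$, so that the $\psi$-argument thereby produced becomes of the same shape as the argument appearing under $S_2$ in the first slot.

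Then I would collapse the two tensor legs separately. For the $\C{H}_1$-leg, after using the coherence identity \eqref{unit-action-QR-matched-V-Hopf-lemma} to bring the Sweedler legs feeding the two $\psi$'s into adjacency, the leg has the form $\phi\big(S_2(\psi(\cdots)),\phi(\psi(\cdots),S_1(h\ns{?}))\big)$, and the first equality of \eqref{unit-action-QR-matched-II-Hopf-a'-lemma} collapses it to $S_1(h)$. For the $\C{H}_2$-leg, the same substitution leaves the shape $\psi\big(S_2(\psi(\cdots)),\phi(\psi(\cdots),S_1(\cdots))\big)$, which matches the left-hand side of \eqref{unit-action-QR-matched-IV-Hopf-a-lemma}; that identity followed by the second identity of \eqref{unit-action-QR-matched-II-Hopf-b'-lemma} collapses it to $S_2(h')$, exactly the Hopf version of the two-step loop cancellation (through $\psi(\psi(s,r),J_R(r))=s$) that appears in \eqref{m=2l+1-matched}. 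Together these give $S_{\bowtie}\big((\d_1,h')(h,\d_2)\big)=(S_1(h),S_2(h'))$.

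The hard part will be the Sweedler bookkeeping inside the non-associative algebra $\C{H}_1\ot \C{H}_2$: since the multiplication is not associative the parenthesisations are rigid, and \eqref{unit-action-QR-matched-III-Hopf-a-lemma}, \eqref{unit-action-QR-matched-IV-Hopf-a-lemma} are written with exactly the bracketing that must be reproduced before they can be invoked. Two subtleties deserve attention. First, establishing the intermediate relation $S_1(\phi(h',h))=\phi(\psi(h'\ns{1},h\ns{1}),S_1(h\ns{2}))$ cannot appeal to uniqueness of convolution inverses, which fails without associativity; it must instead be read off from both equalities of \eqref{unit-action-QR-matched-III-Hopf-a-lemma} together with \eqref{S-prop}. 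Second, the coherence \eqref{unit-action-QR-matched-V-Hopf-lemma} is indispensable for synchronising the scrambled legs $h'\ns{2},h'\ns{3}$ (and $h\ns{2},h\ns{3}$), so that the $\psi$-argument sitting under $S_2$ and the $\psi$-argument produced by the $\phi$-antipode relation carry the same Sweedler index; without this alignment the four-leg expression does not visibly reduce.
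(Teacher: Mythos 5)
Your proposal is correct and follows essentially the same route as the paper's own proof: expand $(\d_1,h')(h,\d_2)=\phi(h'\ns{1},h\ns{1})\ot\psi(h'\ns{2},h\ns{2})$, push the comultiplications through via \eqref{module-coalg-I-lemma}--\eqref{module-coalg-II-lemma}, synchronize the scrambled legs with \eqref{unit-action-QR-matched-V-Hopf-lemma}, and collapse the two tensor legs using the derived relations $S_1(\phi(h',h))=\phi\big(\psi(h',h\ns{1}),S_1(h\ns{2})\big)$ (from \eqref{unit-action-QR-matched-III-Hopf-a-lemma}) and $S_2(\psi(h',h))=\psi\big(S_2(h'\ns{1}),\phi(h'\ns{2},h)\big)$ (from \eqref{unit-action-QR-matched-IV-Hopf-a-lemma}) together with \eqref{unit-action-QR-matched-II-Hopf-a'-lemma} and \eqref{unit-action-QR-matched-II-Hopf-b'-lemma}, exactly the paper's toolbox. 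Only note that your intermediate relation should carry the full $h'$ in its first slot, $\phi\big(\psi(h',h\ns{1}),S_1(h\ns{2})\big)$, since as you wrote it the leg $h'\ns{1}$ dangles without a matching $h'\ns{2}$; in your actual substitution you use it in the correct form, so this is merely a notational slip.
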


\begin{proof}
For any $h\in \C{H}_1$, and any $h'\in \C{H}_2$ we have
\begin{align*}
& S_{\bowtie}((\d_1,h')(h,\d_2)) = S_{\bowtie}(\phi(h'\ns{1},h\ns{1}),\psi(h'\ns{2},h\ns{2})) = S_{\bowtie}(\phi(h'\ns{2},h\ns{2}),\psi(h'\ns{1},h\ns{1})) =\\
& \Big(\d_1,S_2\big(\psi(h'\ns{1},h\ns{1})\big)\Big)\Big( S_1\big(\phi(h'\ns{2},h\ns{2})\big),\d_2\Big) = \\
& \Big(\phi\big(S_2(\psi(h'\ns{1},h\ns{1}))\ns{1},S_1(\phi(h'\ns{2},h\ns{2}))\ns{1}\big), \psi\big(S_2(\psi(h'\ns{1},h\ns{1}))\ns{2},S_1(\phi(h'\ns{2},h\ns{2}))\ns{2}\big)\Big) = \\
& \Big(\phi\big(S_2(\psi(h'\ns{1},h\ns{1}))\ns{2},S_1(\phi(h'\ns{2},h\ns{2}))\ns{2}\big), \psi\big(S_2(\psi(h'\ns{1},h\ns{1}))\ns{1},S_1(\phi(h'\ns{2},h\ns{2}))\ns{1}\big)\Big) = \\
& \Big(\phi\big(S_2(\psi(h'\ns{1}\ns{1},h\ns{1}\ns{1})),S_1(\phi(h'\ns{2}\ns{1},h\ns{2}\ns{1}))\big), \psi\big(S_2(\psi(h'\ns{1}\ns{2},h\ns{1}\ns{2})),S_1(\phi(h'\ns{2}\ns{2},h\ns{2}\ns{2}))\big)\Big) =  \\
& \Big(\phi\big(S_2(\psi(h'\ns{1},h\ns{1})),S_1(\phi(h'\ns{3},h\ns{3}))\big), \psi\big(S_2(\psi(h'\ns{2},h\ns{2})),S_1(\phi(h'\ns{4},h\ns{4}))\big)\Big) =  \\
& \Big(\phi\big(S_2(\psi(h'\ns{1},h\ns{1})),S_1(\phi(h'\ns{2}\ns{2},h\ns{2}\ns{2}))\big), \psi\big(S_2(\psi(h'\ns{2}\ns{1},h\ns{2}\ns{1})),S_1(\phi(h'\ns{3},h\ns{3}))\big)\Big) = \\
& \Big(\phi\big(S_2(\psi(h'\ns{1},h\ns{1})),S_1(\phi(h'\ns{2}\ns{1},h\ns{2}\ns{1}))\big), \psi\big(S_2(\psi(h'\ns{2}\ns{2},h\ns{2}\ns{2})),S_1(\phi(h'\ns{3},h\ns{3}))\big)\Big) = \\
& \Big(\phi\big(S_2(\psi(h'\ns{1},h\ns{1})),S_1(\phi(h'\ns{2},h\ns{2}))\big), \psi\big(S_2(\psi(h'\ns{3},h\ns{3})),S_1(\phi(h'\ns{4},h\ns{4}))\big)\Big) = \\
& \Big(\phi\big(S_2(\psi(h'\ns{1},h\ns{1})),[\phi(\psi(h'\ns{2},h\ns{2}\ns{1}), S_1(h\ns{2}\ns{2}))]\big), \psi\big(S_2(\psi(h'\ns{3},h\ns{3})),S_1(\phi(h'\ns{4},h\ns{4}))\big)\Big) = \\
& \Big(S_1(h\ns{1}), \psi\big(S_2(\psi(h'\ns{1},h\ns{2})),S_1(\phi(h'\ns{2},h\ns{2}))\big)\Big) = \\
& \Big(S_1(h\ns{1}), \psi\big(\psi(S_2(h'\ns{1}\ns{1}),\phi(h'\ns{1}\ns{2},h\ns{2})),S_1(\phi(h'\ns{2},h\ns{2}))\big)\Big) = \Big(S_1(h), S_2(h')\Big),
\end{align*}
where on the second, fifth, and ninth equations we used \eqref{unit-action-QR-matched-V-Hopf-lemma}, on the sixth equation we used \eqref{module-coalg-I-lemma} and \eqref{module-coalg-II-lemma}, on the eleventh equation we used the fact that 
\[
S_1(\phi(h',h)) = \phi(\psi(h',h\ns{1}),S_1(h\ns{2})),
\]
which follows from \eqref{unit-action-QR-matched-III-Hopf-a-lemma}, and on the twelfth equation we used \eqref{unit-action-QR-matched-II-Hopf-a'-lemma}. Finally, on the thirteenth equation we used 
\[
S_2(\psi(h',h)) = \psi(S_2(h'\ns{1}),\phi(h'\ns{2},h)),
\]
which is a consequence of \eqref{unit-action-QR-matched-IV-Hopf-a-lemma}, and on the fourteenth we used \eqref{unit-action-QR-matched-II-Hopf-b'-lemma}.
\end{proof}

We are now ready for the main result.

\begin{proposition}\label{prop-matched-pair-Hopf-quasigrs}
Let $(\C{H}_1,\mu_1,\eta_1,\D_1,\ve_1,S_1)$ be an $m_1$-inverse Hopf quasigroups such that $S_1(\d_1) = \d_1$, and that $S_1^{h_1} \in {\rm Aut}(\C{H}_1)$, and $(\C{H}_2,\mu_2,\eta_2,\D_2,\ve_2,S_2)$ be an $m_2$-inverse Hopf quasigroup such that $S_2(\d_2) = \d_2$, and that $S_2^{h_2} \in {\rm Aut}(\C{H}_2)$. Furthermore, let there be two maps $\phi:\C{H}_2 \ot \C{H}_1 \to \C{H}_1$ and $\psi:\C{H}_2 \ot \C{H}_1 \to \C{H}_2$
satisfying
\begin{align}
& \phi(\d_2,h) = h, \quad \phi(h',\d_1) =\d_1, \qquad \psi(\d_2,h) = \d_2, \quad \psi(h',\d_1) =h', \label{unit-action-QR-matched-I-Hopf}\\
& \phi(S(h'\ns{1}),\phi(h'\ns{2},h)) = \ve_2(h')h = \phi(h'\ns{1},\phi(S(h'\ns{2}),h)) , \label{unit-action-QR-matched-II-Hopf-a'}\\
& \psi(\psi(h', S_1^m(h\ns{2}g)),S_1^{m+1}(h\ns{1})) = \psi(h',S_1^m(g)), \label{unit-action-QR-matched-II-Hopf-b}\\
& \psi(\psi(h',S(h\ns{1})),h\ns{2}) = \ve_1(h)h' = \psi(\psi(h',h\ns{1}),S(h\ns{2})) \label{unit-action-QR-matched-II-Hopf-b'}\\
& \D_1(\phi(h',h)) = \phi(h'\ns{1},h\ns{1}) \ot \phi(h'\ns{2},h\ns{2}), \qquad \ve_1(\phi(h',h)) = \ve_1(h)\ve_2(h'), \label{module-coalg-I}\\
& \D_2(\psi(h',h)) = \psi(h'\ns{1},h\ns{1}) \ot \psi(h'\ns{2},h\ns{2}), \qquad \ve_2(\psi(h',h)) = \ve_1(h)\ve_2(h'), \label{module-coalg-II}\\
& \phi(h',S_1^m(g)) = \begin{cases}
\phi(h'\ns{1},S_1^m(h\ns{3}g\ns{2}))\phi(\psi(h'\ns{2},S_1^m(h\ns{2}g\ns{1})),S_1^{m+1}(h\ns{1}))  & \text{if  } m = 2\ell+1, \\
\phi(h'\ns{1},S_1^m(h\ns{2}g\ns{1}))\phi(\psi(h'\ns{2},S_1^m(h\ns{3}g\ns{2})),S_1^{m+1}(h\ns{1})) & \text{if  } m = 2\ell,
\end{cases}  \label{unit-action-QR-matched-III-Hopf}\\
& \phi(h'\ns{1},S(h\ns{2}))\big[\phi(\psi(h'\ns{2},S(h\ns{1})),h\ns{3})\big] = \ve_1(h)\ve_2(h') =  \phi(h'\ns{1}, h\ns{1})\big[\phi(\psi(h'\ns{2},h\ns{2}),S(h\ns{3}))\big], \label{unit-action-QR-matched-III-Hopf-a}\\
& \big[\psi(S(h'\ns{1}),\phi(h'\ns{2},h\ns{1}))\big]\psi(h'\ns{3},h\ns{2}) = \ve_1(h)\ve_2(h') = \label{unit-action-QR-matched-IV-Hopf-a}\\
& \hspace{6cm} \big[\psi(h'\ns{1},\phi(S(h'\ns{3}),h\ns{1}))\big]\psi(S(h'\ns{2}),h\ns{2}), \notag\\
& \psi(h'\ns{1},h\ns{1}) \ot \phi(h'\ns{2},h\ns{2}) = \psi(h'\ns{2},h\ns{2}) \ot \phi(h'\ns{1},h\ns{1})  \label{unit-action-QR-matched-V-Hopf}
\end{align}
for any $h,g \in \C{H}_1$, any $h',g' \in \C{H}_2$, and any $m \in \B{Z}$ that satisfies 
\begin{align}\label{cong-eqn-II-matched-Hopf}
\begin{split}
& m \equiv m_1\,\, ({\rm mod}\,h_1), \\
& m \equiv m_2\,\, ({\rm mod}\,h_2).
\end{split}
\end{align}
Then  $\Big(\C{H}_1 \bowtie \C{H}_2 :=\C{H}_1 \ot \C{H}_2, \mu_{\bowtie},\eta_\ot,\D_\ot,\ve_\ot, S_{\bowtie}\Big)$ 
is an $m$-invertible Hopf quasigroup with the multiplication
\begin{equation}\label{matched-pair-multp-Hopf}
\mu_{\bowtie}((h\ot h')\ot(g\ot g'))=:(h\ot h')(g\ot g') := \Big(h\phi(h'\ns{1},g\ns{1}),\psi(h'\ns{2},g\ns{2})g'\Big),
\end{equation}
and the antipode
\begin{align}\label{perm-matched-pair-prod-Hopf}
\begin{split}
& S_{\bowtie}: \C{H}_1 \bowtie \C{H}_2 \to \C{H}_1 \bowtie \C{H}_2, \\ 
& S_{\bowtie}(h\ot h') := (\d_1\ot S_2(h'))(S_1(h)\ot \d_2) = \Big(\phi\big(S_2(h'\ns{2}),S_1(h\ns{2})\big)\ot\psi\big(S_2(h'\ns{1}),S_1(h\ns{1})\big) \Big),
\end{split}
\end{align}
if and only if
\begin{align}\label{m-inverse-cond-matched-Hopf}
& \begin{cases}
\left. \begin{array}{c}
\phi(h',h) = h, \\ 
\psi(h',h) = h', 
\end{array}\right\}
& \text{\rm if}\,\, m = 2\ell, \\
\left. \begin{array}{c}
\phi(S_2^m(\psi(h'\ns{2},g\ns{2})g'),S_1^m(\phi(h'\ns{1},g\ns{1}))) = \ve_2(h')\phi(S_2^m(g'),S_1^m(g)), \\
\psi(S_2^m(\psi(h'\ns{3},g\ns{2})g'),S_1^m(\phi(h'\ns{2},g\ns{1})))S_2^{m+1}(h'\ns{1}) = \ve_2(h')\psi(S_2^m(g'),S_1^m(g)),
\end{array}\right\}
& \text{\rm if}\,\, m = 2\ell+1,
\end{cases}
\end{align}
for any $h,g \in \C{H}_1$, and any $h',g'\in \C{H}_2$.
\end{proposition}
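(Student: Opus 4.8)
The plan is to follow the proof of Proposition \ref{prop-matched-pair-loops} step by step, upgrading each loop-level identity to Sweedler notation and treating the comultiplications with care. The coalgebra axiom (ii) of Definition \ref{def-m-inv-loop} is automatic, since $(\C{H}_1\ot \C{H}_2,\D_\ot,\ve_\ot)$ is the ordinary tensor-product coalgebra. For the algebra axiom (i), unitality of $\eta_\ot$ with respect to $\mu_\bowtie$ follows at once from the unit conditions \eqref{unit-action-QR-matched-I-Hopf}; associativity is neither available nor required. For the multiplicativity axiom (iii), I would verify that $\D_\ot$ and $\ve_\ot$ are algebra maps by expanding $\D_\ot\big((h\ot h')(g\ot g')\big)$ through \eqref{matched-pair-multp-Hopf}, invoking the coalgebra-morphism properties \eqref{module-coalg-I} and \eqref{module-coalg-II} of $\phi$ and $\psi$, and then using the compatibility \eqref{unit-action-QR-matched-V-Hopf} to match the Sweedler legs; the counit case is immediate from the counit identities contained in \eqref{module-coalg-I} and \eqref{module-coalg-II}.

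For axiom (iv), I would first check that $S_\bowtie$ is a coalgebra anti-automorphism, again by expanding \eqref{perm-matched-pair-prod-Hopf} and appealing to \eqref{module-coalg-I}, \eqref{module-coalg-II}, and \eqref{unit-action-QR-matched-V-Hopf}. Next, to establish the antipode axiom \eqref{S-prop}, I would expand $(h\ns{1}\ot h'\ns{1})\,S_\bowtie(h\ns{2}\ot h'\ns{2})$ directly through \eqref{perm-matched-pair-prod-Hopf}, regrouping the four factors exactly as in the loop-level identity $(r,s)J(r,s)=(\d,\d)$ of Proposition \ref{prop-matched-pair-loops}; the calculation then collapses upon invoking \eqref{unit-action-QR-matched-II-Hopf-a'} and \eqref{unit-action-QR-matched-IV-Hopf-a}. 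The mirror identity $S_\bowtie(h\ns{1}\ot h'\ns{1})(h\ns{2}\ot h'\ns{2}) = \ve_1(h)\ve_2(h')(\d_1\ot \d_2)$ is handled symmetrically, now using \eqref{unit-action-QR-matched-III-Hopf-a} and \eqref{unit-action-QR-matched-II-Hopf-b'}.

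The core of the argument is the $m$-inverse identity \eqref{S-m-prop} for $S_\bowtie$. I would first establish, by induction on $m$ using Lemma \ref{lemma-antipode}, the closed form
\begin{equation*}
S_\bowtie^m(h\ot h') = \begin{cases} S_1^m(h)\ot S_2^m(h') & \text{if } m=2\ell,\\ (\d_1\ot S_2^m(h'))(S_1^m(h)\ot \d_2) & \text{if } m=2\ell+1,\end{cases}
\end{equation*}
which is the Hopf counterpart of the formula for $J^m(r,s)$ in the proof of Proposition \ref{prop-matched-pair-loops}. Substituting this into $S_\bowtie^m\big((h\ns{2}\ot h'\ns{2})(g\ot g')\big)\,S_\bowtie^{m+1}(h\ns{1}\ot h'\ns{1})$ and expanding via \eqref{matched-pair-multp-Hopf}, the even case reduces, after applying the coalgebra-morphism properties and the $m$-inverse-type relations \eqref{unit-action-QR-matched-II-Hopf-b} and \eqref{unit-action-QR-matched-III-Hopf}, precisely to the even branch of \eqref{m-inverse-cond-matched-Hopf}. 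The odd case is the genuine computation, mirroring display \eqref{m=2l+1-matched}: one regroups the four factors using only unitality, rewrites $S_1^m\circ\phi$ and $S_2^m\circ\psi$ through the antipode-compatibilities derived in Lemma \ref{lemma-antipode} (namely $S_1(\phi(h',h))=\phi(\psi(h',h\ns{1}),S_1(h\ns{2}))$ and $S_2(\psi(h',h))=\psi(S_2(h'\ns{1}),\phi(h'\ns{2},h))$), applies \eqref{unit-action-QR-matched-III-Hopf} together with \eqref{unit-action-QR-matched-II-Hopf-b}, and finally invokes the odd branch of \eqref{m-inverse-cond-matched-Hopf} to collapse everything to $\ve_1(h)\ve_2(h')\,S_\bowtie^m(g\ot g')$.

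For the converse, I would assume that $\C{H}_1\bowtie\C{H}_2$ is $m$-invertible and read off \eqref{m-inverse-cond-matched-Hopf} by specializing the penultimate line of the computation above: setting $h=\d_1$ and $h'=\d_2$ (equivalently, applying the counits to the appropriate legs so that $h\ns{1},h\ns{2},\dots$ and $h'\ns{1},h'\ns{2},\dots$ all become the group-like units) kills the extra factors and leaves exactly the two identities of the relevant branch, after cancelling $S_1,S_2$ by bijectivity. The main obstacle throughout is the odd-$m$ computation: unlike in the loop case, every element carries a comultiplication, so the Sweedler indices on $h'$ must be distributed across $\phi$, $\psi$, $S_2^m$ and the iterated products in a way that is consistent with \eqref{unit-action-QR-matched-V-Hopf}. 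Keeping this bookkeeping correct, so that each hypothesis applies on the nose at the step where it is needed, is where essentially all the work lies.
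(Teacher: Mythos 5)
Your forward direction is essentially the paper's proof: the closed form for $S_{\bowtie}^m$ (even powers splitting as $S_1^m\ot S_2^m$, odd powers as $(\d_1\ot S_2^m(h'))(S_1^m(h)\ot \d_2)$) is exactly what Lemma \ref{lemma-antipode} supplies by induction, and your even/odd computations invoke \eqref{unit-action-QR-matched-III-Hopf}, \eqref{unit-action-QR-matched-II-Hopf-b} and \eqref{m-inverse-cond-matched-Hopf} at the same places as displays \eqref{m=2l-matched-Hopf} and \eqref{m=2l+1-matched-Hopf}. However, you never address the \emph{uniqueness} clause of Definition \ref{def-m-inv-loop}(iv): the statement asserts that $S_{\bowtie}$ is the unique coalgebra anti-automorphism satisfying \eqref{S-prop}, and the paper proves this by a separate argument — for any candidate $T=(T_1,T_2)$ with property \eqref{S-prop}, applying $\Id\ot\ve_2$ to $(h\ns{1},h'\ns{1})T(h\ns{2},h'\ns{2}) = (\ve_1(h)\d_1,\ve_2(h')\d_2)$ gives $\phi\big(h'\ns{1},T_1(h,h'\ns{2})\big) = \ve_2(h')S_1(h)$, whence $T_1(h,h') = \phi\big(S_2(h'),S_1(h)\big)$ by \eqref{unit-action-QR-matched-II-Hopf-a'}, and symmetrically $T_2(h,h')=\psi\big(S_2(h'),S_1(h)\big)$ via $\ve_1\ot\Id$. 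Without this step axiom (iv) is simply not verified, so the forward implication is incomplete as proposed.

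The second gap is in your converse. Specializing ``$h=\d_1$ and $h'=\d_2$'' simultaneously reduces \eqref{S-m-prop} for $S_{\bowtie}$ to the tautology $S_{\bowtie}^m(g\ot g') = S_{\bowtie}^m(g\ot g')$, because every occurrence of $\phi$ and $\psi$ whose triviality or compatibility you are trying to detect is killed by the unit conditions \eqref{unit-action-QR-matched-I-Hopf}; nothing can be read off. For $m=2\ell+1$ the two target identities of \eqref{m-inverse-cond-matched-Hopf} contain an arbitrary $h'$, so you may set only $h=\d_1$ in the penultimate line of \eqref{m=2l+1-matched-Hopf}, keeping $h'$ general, and then separate the two tensor legs with $\Id\ot\ve_2$ and $\ve_1\ot\Id$; note also that no ``cancelling of $S_1,S_2$ by bijectivity'' occurs here, since $S_1^m$ and $S_2^m$ survive inside the final identities. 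For $m=2\ell$ two \emph{different} specializations are needed, as in the paper: first $h=\d_1$, $g'=\d_2$ (with $h',g$ arbitrary) after $\Id\ot\ve_2$, which yields $S_1^m(\phi(h',g)) = \ve_2(h')S_1^m(g)$ and then the triviality of $\phi$ by injectivity of $S_1^m$; and afterwards $g=\d_1$, $h'=\d_2$ (with $h,g'$ arbitrary) after $\ve_1\ot\Id$, feeding the already-established triviality of $\phi$ back in to obtain the triviality of $\psi$. With the uniqueness argument supplied and the converse specializations corrected in this way, your outline does coincide with the paper's proof.
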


\begin{proof}
Let us first assume that the conditions \eqref{m-inverse-cond-matched-Hopf} are met. We shall begin with the observation that
\begin{align*}%\label{xJ(x)=d}
%\begin{split}
& (h\ns{1},h'\ns{1})S_{\bowtie}(h\ns{2},h'\ns{2}) = \big[(h\ns{1},\d_2)(\d_1,h'\ns{1})\big]\big[(\d_1,S_2(h'\ns{2}))(S_1(h\ns{2}),\d_2)\big] = \\
& \big[(h\ns{1},\d_2)(\d_1,h'\ns{1})\big]\Big(\phi(S_2(h'\ns{3}),S_1(h\ns{3})),\psi(S_2(h'\ns{2}),S_1(h\ns{2}))\Big) = \\
& (h\ns{1},\d_2)\Big[(\d_1,h'\ns{1})\big(\phi(S_2(h'\ns{3}),S_1(h\ns{3})),\psi(S_2(h'\ns{2}),S_1(h\ns{2}))\big)\Big] = \\
& (h\ns{1},\d_2)\Big(\phi(h'\ns{1}\ns{1},\phi(S_2(h'\ns{3}),S_1(h\ns{3}))\ns{1}), \\
&\hspace{3cm} \psi(h'\ns{1}\ns{2},\phi(S_2(h'\ns{3}),S_1(h\ns{3}))\ns{2})\psi(S_2(h'\ns{2}),S_1(h\ns{2}))\Big) = \\
& (h\ns{1},\d_2)\Big(\phi(h'\ns{1},\phi(S_2(h'\ns{5}),S_1(h\ns{4}))), \psi(h'\ns{2},\phi(S_2(h'\ns{4}),S_1(h\ns{3})))\psi(S_2(h'\ns{3}),S_1(h\ns{2}))\Big) = \\
& (h\ns{1},\d_2)\Big(\phi(h'\ns{1},\phi(S_2(h'\ns{4}),S_1(h\ns{3}))), \psi(h'\ns{2}S_2(h'\ns{3}),S_1(h\ns{2}))\Big) = \\
& (h\ns{1},\d_2)\Big(\phi(h'\ns{1},\phi(S_2(h'\ns{2}),S_1(h\ns{2}))), \d_2\Big) = \\
& (h\ns{1},\d_2)(S_1(h\ns{2}), \ve_2(h')\d_2) = (h\ns{1}S_1(h\ns{2}), \d_2) = (\ve_1(h)\d_1,\ve_2(h')\d_2),
%\end{split}
\end{align*}
where on the fifth equality we used \eqref{module-coalg-I} and \eqref{module-coalg-II}, on the sixth equality \eqref{unit-action-QR-matched-IV-Hopf-a}, and on the eighth equality we use \eqref{unit-action-QR-matched-II-Hopf-a'}. Similarly,
\begin{align*}
& S_{\bowtie}(h\ns{1},h'\ns{1})(h\ns{2},h'\ns{2}) = \big[(\d_1,S_2(h'\ns{1}))(S_1(h\ns{1}),\d_2)\big]\big[(h\ns{2},\d_2)(\d_1,h'\ns{2})\big] = \\
& \Big(\phi(S_2(h'\ns{2}),S_1(h\ns{2})),\psi(S_2(h'\ns{1}),S_1(h\ns{1}))\Big)\big[(h\ns{3},\d_2)(\d_1,h'\ns{3})\big] = \\
& \Big[\Big(\phi(S_2(h'\ns{2}),S_1(h\ns{2})),\psi(S_2(h'\ns{1}),S_1(h\ns{1}))\Big)(h\ns{3},\d_2)\Big](\d_1,h'\ns{3}) = \\
& \Big(\phi(S_2(h'\ns{2}),S_1(h\ns{2}))\phi(\psi(S_2(h'\ns{1}),S_1(h\ns{1}))\ns{1},h\ns{3}\ns{1}), \\
&\hspace{5cm}\psi(\psi(S_2(h'\ns{1}),S_1(h\ns{1}))\ns{2},h\ns{3}\ns{2})\Big)(\d_1,h'\ns{3}) = \\
& \Big(\phi(S_2(h'\ns{2}),S_1(h\ns{2}))\phi(\psi(S_2(h'\ns{1}\ns{2}),S_1(h\ns{1}\ns{2})),h\ns{3}\ns{1}), \\
&\hspace{5cm} \psi(\psi(S_2(h'\ns{1}\ns{1}),S_1(h\ns{1}\ns{1})),h\ns{3}\ns{2})\Big)(\d_1,h'\ns{3}) = \\
& \Big(\phi(S_2(h'\ns{3}),S_1(h\ns{3}))\phi(\psi(S_2(h'\ns{2}),S_1(h\ns{2})),h\ns{4}),\psi(\psi(S_2(h'\ns{1}),S_1(h\ns{1})),h\ns{5})\Big)(\d_1,h'\ns{4}) = \\
& \Big(\phi\big(S_2(h'\ns{2}),S_1(h\ns{2})h\ns{3}\big),\psi\big(\psi(S_2(h'\ns{1}),S_1(h\ns{1})),h\ns{4}\big)\Big)(\d_1,h'\ns{3}) =\\
& \Big(\d_1,\psi\big(\psi(S_2(h'\ns{1}),S_1(h\ns{1})),h\ns{2}\big)\Big)(\d_1,h'\ns{2}) = \Big(\ve_1(h)\d_1,S_2(h'\ns{1})\Big)(\d_1,h'\ns{2}) = \\
& (\ve_1(h)\d_1, S_2(h'\ns{1})h'\ns{2}) = (\ve_1(h)\d_1, \ve_2(h')\d_2),
\end{align*}
using \eqref{unit-action-QR-matched-III-Hopf-a} on the seventh equality, and \eqref{unit-action-QR-matched-II-Hopf-b'} on the tenth. Furthermore, \eqref{perm-matched-pair-prod-Hopf} is unique with the property \eqref{S-prop}. Indeed, if $T:\C{H}_1\ot \C{H}_2 \to \C{H}_1\ot \C{H}_2$, say $T(h,h') = (T_1(h,h'),T_2(h,h'))$, is a coalgebra anti-automorphism so that 
\begin{equation}\label{prop-T}
(h\ns{1},h'\ns{1})T(h\ns{2},h'\ns{2}) = (\ve_1(h)\d_1,\ve_2(h')\d_2) = T(h\ns{1},h'\ns{1})(h\ns{2},h'\ns{2}),
\end{equation}
then on one hand (from the first equality of \eqref{prop-T}) 
\begin{align}\label{S-prop-II}
\begin{split}
& (\ve_1(h)\d_1,\ve_2(h')\d_2) = (h\ns{1},h'\ns{1})T(h\ns{2},h'\ns{2}) = \\
& (h\ns{1},h'\ns{1})\Big(T_1(h\ns{2},h'\ns{2}), T_2(h\ns{2},h'\ns{2})\Big) = \\
& \Big(h\ns{1}\phi\big(h'\ns{1}\ns{1},T_1(h\ns{2},h'\ns{2})\ns{1}\big), \psi\big(h'\ns{1}\ns{2},T_1(h\ns{2},h'\ns{2})\ns{2}\big)T_2(h\ns{2},h'\ns{2})\Big),
\end{split}
\end{align}
while on the other hand (this time from the second equality of \eqref{prop-T}),
\begin{align}\label{S-prop-III}
\begin{split}
& (\ve_1(h)\d_1,\ve_2(h')\d_2) = T(h\ns{1},h'\ns{1})(h\ns{2},h'\ns{2}) = \\
& \Big(T_1(h\ns{1},h'\ns{1}), T_2(h\ns{1},h'\ns{1})\Big)(h\ns{2},h'\ns{2}) = \\
& \Big(T_1(h\ns{1},h'\ns{1})\phi\big(T_2(h\ns{1},h'\ns{1})\ns{1},h\ns{2}\ns{1}\big), \psi\big(T_2(h\ns{1},h'\ns{1})\ns{2},h\ns{2}\ns{2}\big)h'\ns{2}\Big). 
\end{split}
\end{align}
Application of $\Id\ot \ve_2:\C{H}_1\ot \C{H}_2\to \C{H}_1$ to \eqref{S-prop-II} yields
\[
h\ns{1}\phi\big(h'\ns{1},T_1(h\ns{2},h'\ns{2})\big) = \ve_1(h)\ve_2(h')\d_1,
\]
which, in turn, leads to
\[
\phi\big(h'\ns{1},T_1(h,h'\ns{2})\big) = \ve_2(h')S_1(h).
\]
But then,
\begin{equation}\label{T-I}
T_1(h,h') = \phi\Big(S_2(h'\ns{1}),\phi\big(h'\ns{2},T_1(h,h'\ns{3})\big)\Big) = \phi\big(S_2(h'),S_1(h)\big).
\end{equation}
Similarly, applying $\ve_1\ot \Id:\C{H}_1\ot \C{H}_2\to \C{H}_2$ to \eqref{S-prop-III} we derive
\begin{equation}\label{T-II}
T_2(h,h') = \psi\big(S_2(h'),S_1(h)\big).
\end{equation}
Now, from \eqref{T-I} and \eqref{T-II} we conclude $T = S_{\bowtie}$.

We next proceed to show that \eqref{perm-matched-pair-prod-Hopf} satisfies \eqref{S-m-prop}. In case of $m = 2\ell+1$, we have
\begin{align}\label{m=2l+1-matched-Hopf}
\begin{split}
& S_{\bowtie}^m\big((h\ns{2},h'\ns{2})(g,g')\big)S_{\bowtie}^{m+1}(h\ns{1},h'\ns{1}) =  S_{\bowtie}^m\big(h\ns{2}\phi(h'\ns{2},g\ns{1}),\psi(h'\ns{3},g\ns{2})g'\big)S_{\bowtie}^{m+1}(h\ns{1},h'\ns{1}) = \\
& \Big[\big(\d_1,S_2^m(\psi(h'\ns{3},g\ns{2})g')\big)\big(S_1^m(h\ns{2}\phi(h'\ns{2},g\ns{1})),\d_2\big)\Big]\big(S_1^{m+1}(h\ns{1}),S_2^{m+1}(h'\ns{1})\big) = \\
& \Big(\phi\big(S_2^m(\psi(h'\ns{3},g\ns{2})g')\ns{1}, S_1^m(h\ns{2}\phi(h'\ns{2},g\ns{1}))\ns{1}\big), \\
& \hspace{3cm}\psi\big(S_2^m(\psi(h'\ns{3},g\ns{2})g')\ns{2}, S_1^m(h\ns{2}\phi(h'\ns{2},g\ns{1}))\ns{2}\big)\Big) \big(S_1^{m+1}(h\ns{1}),S_2^{m+1}(h'\ns{1})\big) = \\
& \Big(\phi\big(S_2^m(\psi(h'\ns{3},g\ns{2})g')\ns{1}, S_1^m(h\ns{2}\phi(h'\ns{2},g\ns{1}))\ns{1}\big)\times\\
& \hspace{1.5cm}\phi\big(\psi\big(S_2^m(\psi(h'\ns{3},g\ns{2})g')\ns{2}\ns{1}, S_1^m(h\ns{3}\phi(h'\ns{2},g\ns{1}))\ns{2}\ns{1}, S_1^{m+1}(h\ns{1})\ns{1}\big)\big), \\
& \hspace{1cm}\psi\big(\psi\big(S_2^m(\psi(h'\ns{3},g\ns{2})g')\ns{2}\ns{2}, S_1^m(h\ns{2}\phi(h'\ns{2},g\ns{1}))\ns{2}\ns{2}, S_1^{m+1}(h\ns{1})\ns{2}\big)\big)S_2^{m+1}(h'\ns{1})\Big) = \\
& \Big(\phi\big(S_2^m(\psi(h'\ns{3},g\ns{2})g')\ns{1},\big[S_1^m(h\ns{2}\phi(h'\ns{2},g\ns{1}))\ns{1}S_1^{m+1}(h\ns{1})\ns{1}\big]\big), \\
&\hspace{2cm}\psi\big(S_2^m(\psi(h'\ns{3},g\ns{2})g')\ns{2},\big[S_1^m(h\ns{2}\phi(h'\ns{2},g\ns{1}))\ns{2}S_1^{m+1}(h\ns{1})\ns{2}\big]\big)S_2^{m+1}(h'\ns{1})\Big) = \\
& \Big(\ve_1(h)\phi\big(S_2^m(g')\ns{1},S_1^m(g)\ns{1}\big), \ve_2(h')\psi\big(S_2^m(g')\ns{2},S_1^m(g)\ns{2}\big)\Big) = (\d_1,\ve_2(h')S_2^m(g'))(\ve_1(h)S_1^m(g),\d_2) = \\
& \ve_1(h)\ve_2(h')S_{\bowtie}^m(g,g'),
\end{split}
\end{align} 
where on the second and the eighth equalities we used Lemma \ref{lemma-antipode}, on the fifth equality we used \eqref{unit-action-QR-matched-III-Hopf} and \eqref{unit-action-QR-matched-II-Hopf-b}, and on the sixth equality we used \eqref{m-inverse-cond-matched-Hopf}. If, on the other hand, $m=2\ell$
\begin{align}\label{m=2l-matched-Hopf}
\begin{split}
& S_{\bowtie}^m\big((h\ns{2},h'\ns{2})(g,g')\big)S_{\bowtie}^{m+1}(h\ns{1},h'\ns{1}) =  S_{\bowtie}^m\big(h\ns{2}\phi(h'\ns{2},g\ns{1}),\psi(h'\ns{3},g\ns{2})g'\big)S_{\bowtie}^{m+1}(h\ns{1},h'\ns{1}) = \\
& \Big(S_1^m(h\ns{2}\phi(h'\ns{2},g\ns{1})),S_2^m(\psi(h'\ns{3},g\ns{2})g')\Big)\Big[\big(\d_1,S_2^{m+1}(h'\ns{1})\big)\big(S_1^{m+1}(h\ns{1}),\d_2\big)\Big] = \\
& \Big(S_1^m(h\ns{2}\phi(h'\ns{2},g\ns{1})),S_2^m(\psi(h'\ns{3},g\ns{2})g')\Big) \times \\
&\Big(\phi\big(S_2^{m+1}(h'\ns{1})\ns{1},S_1^{m+1}(h\ns{1})\ns{1}\big), \psi\big(S_2^{m+1}(h'\ns{1})\ns{2},S_1^{m+1}(h\ns{1})\ns{2}\big)\Big) = \\
& \Big(S_1^m(h\ns{2}\phi(h'\ns{2},g\ns{1}))\Big[\phi\big(S_2^m(\psi(h'\ns{3},g\ns{2})g')\ns{1}, \phi\big(S_2^{m+1}(h'\ns{1})\ns{1}\ns{1},S_1^{m+1}(h\ns{1})\ns{1}\ns{1}\big)\big)\Big], \\
& \Big[\psi\big(S_2^m(\psi(h'\ns{3},g\ns{2})g')\ns{2}, \phi\big(S_2^{m+1}(h'\ns{1})\ns{1}\ns{2},S_1^{m+1}(h\ns{1})\ns{1}\ns{2}\big)\big)\Big] \times \\
&\hspace{7,5cm}\psi\big(S_2^{m+1}(h'\ns{1})\ns{2},S_1^{m+1}(h\ns{1})\ns{2}\big)\Big) = \\
& \big(\ve_1(h)S_1^m(g),\ve_2(h')S_2^m(g')\big),
\end{split}
\end{align}
where on the second equality we used Lemma \ref{lemma-antipode}, and on the fifth equality we used \eqref{m-inverse-cond-matched-Hopf}.

Conversely, let $\C{H}_1$ and $\C{H}_2$ be subject to the hypothesis of the theorem. Then, in the case of $m=2\ell+1$, the application of $\Id\ot \ve_2:\C{H}_1\ot \C{H}_2 \to \C{H}_1$ to the sixth equality
\begin{align*}
& \Big(\phi\big(S_2^m(\psi(h'\ns{3},g\ns{2})g')\ns{1},S_1^m(\phi(h'\ns{2},g\ns{1}))\ns{1}\big), \\
&\hspace{4cm}\psi\big(S_2^m(\psi(h'\ns{3},g\ns{2})g')\ns{2},S_1^m(\phi(h'\ns{2},g\ns{1}))\ns{2}\big)S_2^{m+1}(h'\ns{1})\Big) = \\
& \Big(\phi\big(S_2^m(g')\ns{1},S_1^m(g)\ns{1}\big), \ve_2(h')\psi\big(S_2^m(g')\ns{2},S_1^m(g)\ns{2}\big)\Big)
\end{align*}
of \eqref{m=2l+1-matched-Hopf} yields
\[
\phi\big(S_2^m(\psi(h'\ns{2},g\ns{2})g'),S_1^m(\phi(h'\ns{1},g\ns{1}))\big) = \phi\big(S_2^m(g'),S_1^m(g)\big)\ve_2(h')
\]
for any $g\in \C{H}_1$, and any $g',h'\in \C{H}_2$. Similarly, the application of $\ve_1\ot \Id:\C{H}_1\ot \C{H}_2\to\C{H}_2$ on the other hand (to the sixth equality of \eqref{m=2l+1-matched-Hopf}) this times yields
\[
\psi\big(S_2^m(\psi(h'\ns{3},g\ns{2})g'),S_1^m(\phi(h'\ns{2},g\ns{1}))\big)S_2^{m+1}(h'\ns{1}) = \ve_2(h')\psi\big(S_2^m(g'),S_1^m(g)\big).
\]
Next, if $m=2\ell$, then we apply $\Id\ot \ve_2:\C{H}_1\ot \C{H}_2 \to \C{H}_1$ to the fifth equality
\begin{align*}
& \Big(S_1^m(h\ns{2}\phi(h'\ns{2},g\ns{1}))\Big[\phi\big(S_2^m(\psi(h'\ns{3},g\ns{2})g')\ns{1}, \phi\big(S_2^{m+1}(h'\ns{1})\ns{1}\ns{1},S_1^{m+1}(h\ns{1})\ns{1}\ns{1}\big)\big)\Big], \\
& \Big[\psi\big(S_2^m(\psi(h'\ns{3},g\ns{2})g')\ns{2}, \phi\big(S_2^{m+1}(h'\ns{1})\ns{1}\ns{2},S_1^{m+1}(h\ns{1})\ns{1}\ns{2}\big)\big)\Big] \times\\
& \hspace{8cm}\psi\big(S_2^{m+1}(h'\ns{1})\ns{2},S_1^{m+1}(h\ns{1})\ns{2}\big)\Big) = \\
& \big(\ve_1(h)S_1^m(g),\ve_2(h')S_2^m(g')\big)
\end{align*}
of \eqref{m=2l-matched-Hopf} to get
\[
S_1^m(h\ns{2}\phi(h'\ns{2},g\ns{1}))\Big[\phi\big(S_2^m(\psi(h'\ns{3},g\ns{2})g'), \phi\big(S_2^{m+1}(h'\ns{1}),S_1^{m+1}(h\ns{1})\big)\big)\Big] = \ve_1(h)S_1^m(g)\ve_2(h')\ve_2(g')
\]
for any $g,h\in \C{H}_1$, and any $g',h'\in \C{H}_2$. In particular, for $h=1$ and $g'=1$ we arrive at
\[
S_1^m(\phi(h',g)) = \ve_2(h')S_1^m(g),
\] 
from which we conclude that
\begin{equation}\label{eqn-phi-triv}
\phi(h',g) = \ve_2(h')g.
\end{equation}
Similarly, the application of $\ve_1\ot \Id:\C{H}_1\ot \C{H}_2\to\C{H}_2$ to the fifth equality of \eqref{m=2l-matched-Hopf} yields
\begin{align*}
& \Big[\psi\big(S_2^m(\psi(h'\ns{3},g\ns{2})g'), \phi\big(S_2^{m+1}(h'\ns{1})\ns{1},S_1^{m+1}(h\ns{1})\ns{1}\big)\big)\Big]\psi\big(S_2^{m+1}(h'\ns{1})\ns{2},S_1^{m+1}(h\ns{1})\ns{2}\big)\Big) = \\
& \ve_1(h)\ve_1(g)\ve_2(h')S_2^m(g').
\end{align*}
Now, invoking \eqref{eqn-phi-triv}, and setting $g=1$ and $h'=1$, we obtain (in view of \eqref{unit-action-QR-matched-I-Hopf})
\[
\psi\big(S_2^m(g'), S_1^{m+1}(h)\big) = \ve_1(h)S_2^m(g'),
\]
from which the the triviality of the left action follows.
\end{proof}

\begin{definition}
Let $(\C{H}_1,\mu_1,\eta_1,\D_1,\ve_1,S_1)$ be an $m_1$-inverse Hopf quasigroup such that $S_1(\d_1) = \d_1$, and that $S_1^{h_1} \in {\rm Aut}(\C{H}_1)$, and $(\C{H}_2,\mu_2,\eta_2,\D_2,\ve_2,S_2)$ be an $m_2$-inverse Hopf quasigroup such that $S_2(\d_2) = \d_2$, and that $S_2^{h_2} \in {\rm Aut}(\C{H}_2)$. Let also $m\in \B{Z}$ be a solution of 
\begin{align*}
& m \equiv m_1\,\, ({\rm mod}\,h_1), \\
& m \equiv m_2\,\, ({\rm mod}\,h_2).
\end{align*} 
Then, $(\C{H}_1,\C{H}_2)$ is called a ``matched pair of $m$-inverse property Hopf quasigroups'' if the Hopf quasigroups $(\C{H}_1,\mu_1,\eta_1,\D_1,\ve_1,S_1)$ and $(\C{H}_2,\mu_2,\eta_2,\D_2,\ve_2,S_2)$ satisfy the conditions \eqref{unit-action-QR-matched-I-Hopf}-\eqref{unit-action-QR-matched-V-Hopf}.
\end{definition}

A remark is in order.

\begin{remark}
Given an $m_1$-inverse property quasigroup $Q_1$, an $m_2$-inverse property quasigroup $Q_2$, and a solution $m\in \B{Z}$ of 
\begin{align*}
& m \equiv m_1\,\, ({\rm mod}\,h_1), \\
& m \equiv m_2\,\, ({\rm mod}\,h_2),
\end{align*}  
let $\Big((Q_1,J_1,\d_1),(Q_2,J_2,\d_2)\Big)$ be a matched pair of $m$-inverse property quasigroups such that $J_1(q)q = \d_1$ for any $q\in Q_1$ and $J_2(q')q' = \d_2$ for any $q'\in Q_2$. Then 
$(kQ_1,kQ_2)$ is a matched pair of $m$-inverse property Hopf quasigroups.
\end{remark}

The following result is the universal property of the matched pair construction for $m$-inverse property Hopf quasigroups, that is, the analogue of  \cite[Thm. 7.2.3]{Majid-book}.

\begin{proposition}\label{prop-m-inv-loop-univ}
Let $(\C{H}_1,\mu_1,\eta_1,\D_1,\ve_1,S_1)$ be an $m_1$-inverse Hopf quasigroups such that $S_1(\d_1) = \d_1$, and that $S_1^{h_1} \in {\rm Aut}(\C{H}_1)$, and $(\C{H}_2,\mu_2,\eta_2,\D_2,\ve_2,S_2)$ be an $m_2$-inverse Hopf quasigroup such that $S_2(\d_2) = \d_2$, and that $S_2^{h_2} \in {\rm Aut}(\C{H}_2)$. Let also $m\in \B{Z}$ be a solution of 
\begin{align*}
& m \equiv m_1\,\, ({\rm mod}\,h_1), \\
& m \equiv m_2\,\, ({\rm mod}\,h_2),
\end{align*} 
and $\C{G}$ be an $m$-inverse Hopf quasigroup so that $\C{H}_1$ and $\C{H}_2$ are $m$-inverse Hopf quasi-subgroups of $\C{G}$;
\[
\C{H}_1 \xhookrightarrow{} \C{G} \xhookleftarrow{} \C{H}_2,
\]
such that the multiplication on $\C{G}$ yields an isomorphism
\begin{equation}\label{Theta-map-Hopf}
\Theta:\C{H}_1\otimes \C{H}_2 \to \C{G}, \qquad h\ot h' \mapsto hh',
\end{equation}
of vector spaces, under which the multiplications are compatible as
\[
(hh')g = h(h'g), \qquad g(hh') = (gh)h',
\]
for any $h\in \C{H}_1$, any $h'\in \C{H}_2$, and any $g\in \C{G}$, while the antipodes are compatible as 
\begin{equation}\label{J-on-Q-Hopf}
S(hh') = S_2(h')S_1(h), \qquad S(h'h) = S_1(h)S_2(h')
\end{equation}
for any $h\in \C{H}_1$, any $h' \in \C{H}_2$, and any $g\in \C{G}$. Then, $(\C{H}_1,\C{H}_2)$ is a matched pair of $m$-inverse Hopf quasigroups, and $\C{G} \cong \C{H}_1\bowtie \C{H}_2$ as Hopf quasigroups.
\end{proposition}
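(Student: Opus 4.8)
The plan is to transport the entire structure of $\C{G}$ to $\C{H}_1\ot\C{H}_2$ along $\Theta$, mirroring the loop-level argument of Proposition \ref{prop-matched-pair-quasi-universal}. First I would define
\[
\phi(h',h) := (\Id\ot\ve_2)\big(\Theta^{-1}(h'h)\big), \qquad \psi(h',h) := (\ve_1\ot\Id)\big(\Theta^{-1}(h'h)\big).
\]
The key point is that $\Theta$ is not merely a linear isomorphism but a coalgebra isomorphism: since $\D_\C{G}$ is multiplicative (Definition \ref{def-m-inv-loop}(iii)) the multiplication of $\C{G}$ is a coalgebra map, and the inclusions $\C{H}_i\hookrightarrow\C{G}$ are coalgebra maps, whence $\Theta=\mu_\C{G}\circ(\iota_1\ot\iota_2)$ and $\Theta^{-1}$ intertwine $\D_\ot$ with $\D_\C{G}$. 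Writing $\Theta^{-1}(g)=g\nsb{1}\ot g\nsb{2}$ and combining the coalgebra-map property with the counit axioms, I would establish the reconstruction identity
\[
\Theta^{-1}(h'h) = \phi(h'\ns{1},h\ns{1})\ot\psi(h'\ns{2},h\ns{2}), \qquad\text{equivalently}\qquad h'h = \phi(h'\ns{1},h\ns{1})\,\psi(h'\ns{2},h\ns{2}),
\]
which is the Hopf analogue of the identity $sr=\Theta((\d,s)(r,\d))$ used in Proposition \ref{prop-matched-pair-quasi-universal}.

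From the coalgebra-map property of $\Theta^{-1}$ and the counit axioms, the module-coalgebra conditions \eqref{module-coalg-I} and \eqref{module-coalg-II} and the symmetry condition \eqref{unit-action-QR-matched-V-Hopf} are then essentially automatic; for instance both sides of \eqref{unit-action-QR-matched-V-Hopf} collapse to the flip of $\Theta^{-1}(h'h)$ once $\ve_1$ and $\ve_2$ are applied in the two slots, and \eqref{module-coalg-I}--\eqref{module-coalg-II} record that $\phi,\psi$ are the two components of the coalgebra map $\Theta^{-1}\circ\mu_\C{G}$. The unit conditions \eqref{unit-action-QR-matched-I-Hopf} follow from unit-preservation of the inclusions, so that $\d_1,\d_2$ are both sent to $\d\in\C{G}$ and $\Theta^{-1}(h)=h\ot\d_2$, $\Theta^{-1}(h')=\d_1\ot h'$ (with the counit normalisations understood as usual in the Hopf setting).

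The antipode conditions \eqref{unit-action-QR-matched-II-Hopf-a'}, \eqref{unit-action-QR-matched-II-Hopf-b'}, \eqref{unit-action-QR-matched-III-Hopf-a} and \eqref{unit-action-QR-matched-IV-Hopf-a} I would obtain by feeding the reconstruction identity and the compatibility \eqref{J-on-Q-Hopf}, which gives $S_\C{G}(h'h)=S_1(h)S_2(h')$ and $S_\C{G}(hh')=S_2(h')S_1(h)$, into the antipode axioms of $\C{G}$ — the two relations \eqref{S-prop} together with their bi-inner Hopf-quasigroup counterparts — evaluated at $g=h'h$ and $g=hh'$, and then reading off the $\C{H}_1$- and $\C{H}_2$-components via $\Id\ot\ve_2$ and $\ve_1\ot\Id$. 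The $m$-dependent identities \eqref{unit-action-QR-matched-II-Hopf-b} and \eqref{unit-action-QR-matched-III-Hopf}, and finally the conditions \eqref{m-inverse-cond-matched-Hopf}, come from the $m$-inverse property \eqref{S-m-prop} of $\C{G}$, namely $S^m(g\ns{2}g')S^{m+1}(g\ns{1})=\ve(g)S^m(g')$, specialised to the relevant products; here the computation runs exactly along the lines of \eqref{m=2l+1-matched-Hopf} and \eqref{m=2l-matched-Hopf} in the proof of Proposition \ref{prop-matched-pair-Hopf-quasigrs}, read in reverse, the even/odd split of $m$ producing the two branches of \eqref{m-inverse-cond-matched-Hopf}.

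The main obstacle will be the bookkeeping forced by non-associativity: the only associativity at one's disposal is the cross-factor compatibility \eqref{compatibilities}, valid between an $\C{H}_1$-element and an $\C{H}_2$-element, so one may not freely reassociate products lying inside a single factor (such as the $S_2(h'\ns{1})(h'\ns{2}h)$-type expressions that arise) and must instead invoke the genuine Hopf-quasigroup antipode axioms of $\C{G}$ in place of naive cancellation, carefully tracking the bracketing throughout the Sweedler manipulations. Once \eqref{unit-action-QR-matched-I-Hopf}--\eqref{unit-action-QR-matched-V-Hopf} are verified, $(\C{H}_1,\C{H}_2)$ is a matched pair by definition. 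To conclude $\C{G}\cong\C{H}_1\bowtie\C{H}_2$ as Hopf quasigroups it then remains to observe that $\Theta$ is a unit-preserving coalgebra isomorphism, that the reconstruction identity together with \eqref{compatibilities} shows $\Theta$ intertwines $\mu_\C{G}$ with the matched-pair product \eqref{matched-pair-multp-Hopf}, and that \eqref{J-on-Q-Hopf} identifies $S_\C{G}$ with the antipode $S_\bowtie$ of \eqref{perm-matched-pair-prod-Hopf} supplied by Lemma \ref{lemma-antipode}; hence $\Theta$ is an isomorphism of $m$-invertible Hopf quasigroups.
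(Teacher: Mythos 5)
Your proposal follows essentially the same route as the paper's own proof: the same definitions $\phi=(\Id\ot\ve_2)\circ\Theta^{-1}\circ\mu$ and $\psi=(\ve_1\ot\Id)\circ\Theta^{-1}\circ\mu$, the same reconstruction identity $h'h=\Theta\big(\phi(h'\ns{1},h\ns{1})\ot\psi(h'\ns{2},h\ns{2})\big)$, the conditions \eqref{module-coalg-I}, \eqref{module-coalg-II} and \eqref{unit-action-QR-matched-V-Hopf} extracted from the coalgebra-map property of $\Theta^{-1}\circ\mu$ by applying counits in the appropriate slots, the antipode conditions read off from \eqref{S-prop} and \eqref{J-on-Q-Hopf} via $\Id\ot\ve_2$ and $\ve_1\ot\Id$, and the $m$-dependent conditions \eqref{unit-action-QR-matched-II-Hopf-b}, \eqref{unit-action-QR-matched-III-Hopf} and \eqref{m-inverse-cond-matched-Hopf} obtained by specialising \eqref{S-m-prop} along the computations \eqref{m=2l+1-matched-Hopf} and \eqref{m=2l-matched-Hopf}. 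This matches the paper's argument step for step, including the caution about bracketing under non-associativity, so the plan is sound as written.
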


\begin{proof}
Let us begin with the mappings
\begin{equation}\label{phi-and-psi-Hopf}
\phi:\C{H}_2\otimes \C{H}_1 \to \C{H}_1, \qquad \psi:\C{H}_2\otimes \C{H}_1 \to \C{H}_2
\end{equation}
given by
\[
\phi(h',h) := ((\Id\ot \ve_2)\circ \Theta^{-1})(h'h), \qquad \psi(h',h) := ((\ve_1\ot \Id)\circ \Theta^{-1})(h'h),
\]
through
\begin{equation}\label{multp-in-Q-and-RS-Hopf}
h'h = \Theta\big(\phi(h'\ns{1},h\ns{1}),\psi(h'\ns{2},h\ns{2})\big).
\end{equation}
It then follows at once that the isomorphism \eqref{Theta-map-Hopf} respect the multiplications in $\C{G}$ and $\C{H}_1\bowtie \C{H}_2$.

It remains to show that the mappings \eqref{phi-and-psi-Hopf} have the properties \eqref{unit-action-QR-matched-I-Hopf}-\eqref{unit-action-QR-matched-V-Hopf}.

The first one, \eqref{unit-action-QR-matched-I-Hopf}, follows from the consideration of $h=\d_1$ and $h'=\d_2$ in \eqref{multp-in-Q-and-RS-Hopf}, respectively.

Next, the linear map $\Psi: \C{H}_2 \ot \C{H}_1 \to \C{H}_1 \ot \C{H}_2$ given by 
\[
\Psi(h' \ot h):= \Theta^{-1}(h'h) = \phi(h'\ns{1},h\ns{1}) \ot \psi(h'\ns{2},h\ns{2})
\]
being a coalgebra homomorphism, we have
\[
\D_\ot\circ \Psi = (\Psi\ot \Psi)\circ \D_\ot, \qquad \big((\ve_1\ot \ve_2)\circ \Psi\big)(h\ot h') =\ve_1(h)\ve_2(h'), 
\]
for any $h\in \C{H}_1$, and any $h'\in\C{H}_2$. Applying on an arbitrary $h'\ot h \in \C{H}_2\ot \C{H}_1$, we arrive at
\begin{align*}
& \Big[\phi(h'\ns{1},h\ns{1})\ns{1} \ot \psi(h'\ns{2},h\ns{2})\ns{1}\Big] \ot \Big[\phi(h'\ns{1},h\ns{1})\ns{2} \ot \psi(h'\ns{2},h\ns{2})\ns{2}\Big] = \\
& \Big(\phi(h'\ns{1}\ns{1},h\ns{1}\ns{1}) \ot \psi(h'\ns{1}\ns{2},h\ns{1}\ns{2})\Big) \ot \Big(\phi(h'\ns{2}\ns{1},h\ns{2}\ns{1}) \ot \psi(h'\ns{2}\ns{2},h\ns{2}\ns{2})\Big).
\end{align*}
Now, $\Id \ot \ve_2 \ot \Id \ot \ve_2$ yields \eqref{module-coalg-I}, and $\ve_1 \ot\Id \ot \ve_1 \ot \Id$ results in \eqref{module-coalg-II}. Furthermore, $\ve_1 \ot \Id\ot \Id \ot \ve_2$ leads to \eqref{unit-action-QR-matched-V-Hopf}.

On the other hand, in view of \eqref{J-on-Q-Hopf} the property $g\ns{1}S(g\ns{2}) = \ve(g)\d$ implies $(h\ns{1}h'\ns{1})S(h\ns{2}h'\ns{2}) = \ve_1(h)\ve_2(h')\d$ for any $h\in \C{H}_1$ and any $h'\in \C{H}_2$, which in turn implies 
\begin{align*}
& \Big(h\ns{1}\phi(h'\ns{1},\phi(S_2(h'\ns{5}),S_1(h\ns{4}))), \psi(h'\ns{2},\phi(S_2(h'\ns{4}),S_1(h\ns{3})))\psi(S_2(h'\ns{3}),S_1(h\ns{2}))\Big) = \\
&\hspace{2cm}(h\ns{1}S_1(h\ns{2}), \d_2) = (\ve_1(h)\d_1,\ve_2(h')\d_2).
\end{align*}
We then obtain the second equality of \eqref{unit-action-QR-matched-II-Hopf-a'} by applying $\Id \ot \ve_2$, as well as the second equality of \eqref{unit-action-QR-matched-IV-Hopf-a} via $\ve_1\ot \Id$. Similarly, $S(g\ns{1})g\ns{2} = \ve(g)\d$ yields
\begin{align*}
& \Big(\phi(S_2(h'\ns{3}),S_1(h\ns{3}))\phi(\psi(S_2(h'\ns{2}),S_1(h\ns{2})),h\ns{4}),\phi(\psi(S_2(h'\ns{1}),S_1(h\ns{1})),h\ns{5})h'\ns{4}\Big) = \\
& (\ve_1(h)\d_1, S_2(h'\ns{1})h'\ns{2}) = (\ve_1(h)\d_1, \ve_2(h')\d_2),
\end{align*}
which in turn implies the first equality of \eqref{unit-action-QR-matched-III-Hopf-a} by $\Id \ot \ve_2$, and the first equality of \eqref{unit-action-QR-matched-II-Hopf-b'} by $\ve_1 \ot\Id$.

On the next step, $J^m_Q(qq')J^{m+1}_Q(q) = J^m_Q(q')$ for any $q,q'\in Q$ provides, along the lines of \eqref{m=2l+1-matched-Hopf},
\begin{align*}
& \Big(\phi\big(S_2^m(\psi(h'\ns{3},g\ns{2})g')\ns{1}, S_1^m(h\ns{2}\phi(h'\ns{2},g\ns{1}))\ns{1}\big)\times\\
& \hspace{1.5cm}\phi\big(\psi\big(S_2^m(\psi(h'\ns{3},g\ns{2})g')\ns{2}\ns{1}, S_1^m(h\ns{3}\phi(h'\ns{2},g\ns{1}))\ns{2}\ns{1}, S_1^{m+1}(h\ns{1})\ns{1}\big)\big), \\
& \hspace{1.5cm}\psi\big(\psi\big(S_2^m(\psi(h'\ns{3},g\ns{2})g')\ns{2}\ns{2}, S_1^m(h\ns{2}\phi(h'\ns{2},g\ns{1}))\ns{2}\ns{2}, S_1^{m+1}(h\ns{1})\ns{2}\big)\big)S_2^{m+1}(h'\ns{1})\Big) = \\
& \Big(\ve_1(h)\phi\big(S_2^m(g')\ns{1},S_1^m(g)\ns{1}\big), \ve_2(h')\psi\big(S_2^m(g')\ns{2},S_1^m(g)\ns{2}\big)\Big) = (\d_1,\ve_2(h')S_2^m(g'))(\ve_1(h)S_1^m(g),\d_2) = \\
& \ve_1(h)\ve_2(h')S_{\bowtie}^m(g,g').
\end{align*}
In particular, for $h'=\d_2$ we see that
\begin{align*}
& \Big(\phi\big(S_2^m(g')\ns{1}, S_1^m(h\ns{2}g\ns{1})\ns{1}\big)\phi\big(\psi\big(S_2^m(g')\ns{2}\ns{1}, S_1^m(h\ns{3}g\ns{1})\ns{2}\ns{1}, S_1^{m+1}(h\ns{1})\ns{1}\big)\big), \\
& \hspace{5.5cm}\psi\big(\psi\big(S_2^m(g')\ns{2}\ns{2}, S_1^m(h\ns{2}g\ns{1})\ns{2}\ns{2}, S_1^{m+1}(h\ns{1})\ns{2}\big)\big)\Big) = \\
& \Big(\ve_1(h)\phi\big(S_2^m(g')\ns{1},S_1^m(g)\ns{1}\big), \psi\big(S_2^m(g')\ns{2},S_1^m(g)\ns{2}\big)\Big),
\end{align*}
which implies \eqref{unit-action-QR-matched-III-Hopf} by $\Id \ot \ve_2$, and \eqref{unit-action-QR-matched-II-Hopf-b} by $\ve_1 \ot \Id$. Let us also remark that \eqref{unit-action-QR-matched-III-Hopf}  implies the second equality of \eqref{unit-action-QR-matched-III-Hopf-a}, and that \eqref{unit-action-QR-matched-II-Hopf-b} implies the second equation of \eqref{unit-action-QR-matched-II-Hopf-b'}.

Equipped with these now, \eqref{J-on-Q-Hopf} gives
\begin{align*}
& S_{\bowtie}((\d_1,h')(h,\d_2)) = \\
& \Big(\phi\big(S_2(\psi(h'\ns{1},h\ns{1})),S_1(\phi(h'\ns{2},h\ns{2}))\big), \psi\big(S_2(\psi(h'\ns{3},h\ns{3})),S_1(\phi(h'\ns{4},h\ns{4}))\big)\Big) = \\
& \Big(\phi\big(S_2(\psi(h'\ns{1},h\ns{1})),[\phi(\psi(h'\ns{2},h\ns{2}\ns{1}), S_1(h\ns{2}\ns{2}))]\big), \psi\big(S_2(\psi(h'\ns{3},h\ns{3})),S_1(\phi(h'\ns{4},h\ns{4}))\big)\Big) = \\
&\Big(S_1(h), S_2(h')\Big).
\end{align*}
Then, the application of $\Id\ot \ve_2$ yields
\[
\phi\big(S_2(\psi(h'\ns{1},h\ns{1})),[\phi(\psi(h'\ns{2},h\ns{2}), S_1(h\ns{3}))]\big) = \ve_2(h')S_1(h),
\]
in particular,
\begin{align*}
& \phi\big(S_2(\psi(\psi(h',S_1(h\ns{1}))\ns{1},h\ns{2}\ns{1})),[\phi(\psi(\psi(h',S_1(h\ns{1}))\ns{2},h\ns{2}\ns{2}), S_1(h\ns{2}\ns{3}))]\big) = \\
&\ve_2(\psi(h',S_1(h\ns{1})))S_1(h\ns{2}),
\end{align*}
that is, 
\[
\phi\big(S_2(h'\ns{1}),\phi(h'\ns{2}, S_1(h))\big) =\ve_2(h')S_1(h),
\]
the first equality of \eqref{unit-action-QR-matched-II-Hopf-a'}. Similarly, the application of $\ve_1\ot\Id$ onto 
\begin{align*}
& S_{\bowtie}((\d_1,h')(h,\d_2)) = \Big(S_1(h\ns{1}), \psi\big(S_2(\psi(h'\ns{1},h\ns{2})),S_1(\phi(h'\ns{2},h\ns{2}))\big)\Big) = \Big(S_1(h), S_2(h')\Big),
\end{align*}
implies
\[
\psi\big(S_2(\psi(h'\ns{1},h\ns{2})),S_1(\phi(h'\ns{2},h\ns{2}))\big)\Big) = \ve_1(h)S_2(h').
\]
Hence, we see that
\[
\psi\Big(\psi\big(S_2(\psi(h'\ns{1}\ns{1},h\ns{1}\ns{2})),S_1(\phi(h'\ns{1}\ns{2},h\ns{1}\ns{2}))\big),\phi(h'\ns{2},h\ns{2})\Big) = \ve_1(h\ns{1})\psi\big(S_2(h'\ns{1}),\phi(h'\ns{2},h\ns{2})\big),
\]
that is,
\[
S_2(\psi(h',h))= \psi\big(S_2(h'\ns{1}),\phi(h'\ns{2},h)\big).
\]
But then,
\[
\big[\psi(S(h'\ns{1}),\phi(h'\ns{2},h\ns{1}))\big]\psi(h'\ns{3},h\ns{2}) = S_2(\psi(h'\ns{1},h\ns{1}))\psi(h'\ns{2},h\ns{2}) =\ve_1(h)\ve_2(h') ,
\]
the first equality of \eqref{unit-action-QR-matched-IV-Hopf-a} is satisfied.

Finally, having obtained \eqref{unit-action-QR-matched-I-Hopf}-\eqref{unit-action-QR-matched-V-Hopf}, it is possible to derive \eqref{m-inverse-cond-matched-Hopf} from \eqref{m=2l+1-matched-Hopf} in the case $m=2\ell+1$, and from \eqref{m=2l-matched-Hopf} in the case $m=2\ell$.
\end{proof}

\bibliographystyle{plain}
\bibliography{references}{}

\end{document}